\numberwithin{equation}{section}
\newtheorem{theorem}{Theorem}[section]
\newtheorem{corollary}[theorem]{Corollary}
\newtheorem{lemma}[theorem]{Lemma}
\newtheorem{proposition}[theorem]{Proposition}
\theoremstyle{definition}
\newtheorem{assumption}[theorem]{Assumption}
\newtheorem{definition}[theorem]{Definition}
\newtheorem{notation}[theorem]{Notation}
\newtheorem{openproblem}[theorem]{Open Problem}
\newtheorem{question}[theorem]{Question}
\newtheorem{remark}[theorem]{Remark}
\makeatletter\renewenvironment{proof}[1][\proofname] {\par\pushQED{\qed}\normalfont\topsep6\p@\@plus6\p@\relax\trivlist\item[\hskip\labelsep\bfseries#1\@addpunct{.}]\ignorespaces}{\popQED\endtrivlist}
\newcommand\al{\alpha}
\newcommand\be{\beta}
\newcommand\dd{\mathrm d}
\newcommand\De{\Delta}
\newcommand\de{\delta}
\newcommand\deq{\stackrel{\mathrm{distr.}}{=}}
\newcommand\eps{\varepsilon}
\newcommand\ga{\gamma}
\newcommand\ka{\kappa}
\newcommand\la{\lambda}
\newcommand{\om}{\omega}
\newcommand\Si{\Sigma}
\newcommand\si{\sigma}
\renewcommand\d{~\mathrm d}
\renewcommand\phi{\varphi}
\renewcommand\rho{\varrho}
\renewcommand\th{\vartheta}
\newcommand\mbb{\mathbb}
\newcommand\mbf{\mathbf}
\newcommand\mc{\mathcal}
\newcommand\mf{\mathfrak}
\newcommand\mr{\mathrm}
\newcommand\msf{\mathsf}
\begin{document}

\title[Spectral Geometry and Small Time Mass of 2D Anderson Models]
{On the Spectral Geometry and Small Time Mass of Anderson Models on Planar Domains}
\author{Pierre Yves Gaudreau Lamarre}
\address{Department of Mathematics,
Syracuse University,
Syracuse, NY 13244}
\email{pgaudrea@syr.edu}
\author{Yuanyuan Pan}
\email{ypan66@syr.edu}
\maketitle

\begin{abstract}
We consider the Anderson Hamiltonian (AH) and the parabolic Anderson model (PAM)
with white noise and Dirichlet boundary condition on a bounded planar domain $D\subset\mbb R^2$.
We compute the small time asymptotics of the AH's exponential trace up to order $O(\log t)$,
and of the PAM's mass up to order $O(t\log t)$.
Our proof is probabilistic, and relies on the asymptotics
of intersection local times of Brownian motions and bridges in $\mbb R^2$.
Applications of our main result include the following:

\noindent (i) If the boundary $\partial D$ is sufficiently regular, then $D$'s area and $\partial D$'s length
can both be recovered almost surely from a single observation of the AH's eigenvalues.
This extends Mouzard's Weyl law \cite{Mouzard} in the special case of bounded domains. 

\noindent (ii) If $D$ is simply connected and $\partial D$ is fractal, then $\partial D$'s Minkowski dimension (if it exists)
can be recovered almost surely from the PAM's small time asymptotics.

\noindent (iii) The variance of the white noise can be recovered almost surely from a single observation of the AH's eigenvalues.
\end{abstract}

\section{Introduction}

\subsection{The Anderson Models}

Let $D\subset\mbb R^2$ be bounded, nonempty, open, and connected.
Let $\ka\geq0$ be a nonnegative parameter, and let $\xi$ be a standard Gaussian white noise on $\mbb R^2$.
Informally, $\xi:\mbb R^2\to\mbb R$ is a centered Gaussian process with
\[\mbf E[\xi(x)\xi(y)]=\de_0(x-y),\qquad x,y\in \mbb R^2,\]
where $\de_0$ is the delta Dirac distribution. In rigorous terms, $\xi$ is
defined as a centered Gaussian process on $L^2(\mbb R^2)$ (interpreting $\xi(f)=\int f\xi$)
with covariance
\begin{align}
\label{Equation: Inner Product Covariance}
\mbf E[\xi(f)\xi(g)]=\langle f,g\rangle,\qquad f,g\in L^2(\mbb R^2),
\end{align}
where $\langle\cdot,\cdot\rangle$ denotes the standard inner product in $L^2(\mbb R^2)$.
We also use $\|\cdot\|$ to denote the standard norm in $L^2(\mbb R^2)$ throughout.

In this paper, we are interested in two well-studied Anderson models on $D$ with noise $\ka\xi$:
On the one hand, we consider the Anderson Hamiltonian (AH)
\begin{align}
\label{Equation: AH}
H_\ka=-\tfrac12\De+\ka\xi,
\end{align}
which acts on a dense subspace of $L^2(D)$ that satisfies the Dirichlet boundary condition
on $\partial D$. On the other hand, we consider the parabolic Anderson model (PAM)
$u_\ka(t,x)=\mr e^{-tH_\ka}\mbf 1_D(x)$, where $\mbf 1_{\{\cdot\}}$ denotes the
indicator function. In other words,
\begin{align}
\label{Equation: PAM}
\begin{cases}
\partial_tu_\ka(t,x)=\big(\tfrac12\De-\ka\xi(x)\big)u_\ka(t,x)&\qquad t>0,~x\in D,\\
u_\ka(0,x)=1&\qquad x\in D,\\
u_\ka(t,x)=0&\qquad t>0,~x\in\partial D.
\end{cases}
\end{align}

When $\ka=0$, \eqref{Equation: AH} and \eqref{Equation: PAM} respectively reduce
to the Dirichlet Laplacian and heat equation on $D$, whose definitions are well-known
and classical. However, when $\ka>0$, the AH and PAM are notoriously
difficult to construct rigorously due to the irregularity of $\xi$.
The standard approach to get around this obstacle is to introduce a sequence
of smooth approximations of the AH and PAM and take limits: Let
\begin{align}
\label{Equation: Gaussian Kernel}
p_t(x)=\frac{\mr e^{-|x|^2/2t}}{2\pi t},\qquad t>0,~x\in\mbb R^2
\end{align}
denote the planar Gaussian kernel. For every $\eps>0$, let
\begin{align}
\label{Equation: Smooth AH}
H_{\ka,\eps}=-\tfrac12\De+\ka\xi_\eps
\qquad\text{and}\qquad
u_{\ka,\eps}(t,x)=\mr e^{-tH_{\ka,\eps}}\mbf 1_D(x),
\end{align}
where $\xi_\eps:\mbb R^2\to\mbb R$ is a centered Gaussian
process with covariance
\begin{align}
\label{Equation: xi epsilon covariance}
\mbf E[\xi_\eps(x)\xi_\eps(y)]=p_\eps(x-y),\qquad x,y\in \mbb R^2.
\end{align}
Since $\xi_\eps$ has smooth
sample paths, we can define \eqref{Equation: Smooth AH}
using classical theory. Then, \eqref{Equation: AH}
and \eqref{Equation: PAM} can be constructed as the limits
\begin{align}
\label{Equation: Smooth AH Limits}
H_\ka=\lim_{\eps\to0}(H_{\ka,\eps}+\msf c_{\ka,\eps})
\qquad\text{and}\qquad
u_\ka(t,x)=\lim_{\eps\to0}u_{\ka,\eps}(t,x)\mr e^{-t\msf c_{\ka,\eps}},
\end{align}
where $\{\msf c_{\ka,\eps}:\eps>0\}$ are diverging renormalization constants that are designed to compensate
for $\xi_\eps$'s singularity as $\eps\to0$.
Constructions of this type have been carried out using regularity structures
\cite{Labbe,MatsudaVanZuijlen} and
paracontrolled calculus \cite{ChoukVanZuijlen,Mouzard}.
See also
\cite{AllezChouk,BailleulDangMouzard,DahlqvistDiehlDriver,GubinelliImkellerPerkowski,GubinelliUgurcanZachhuber,Hairer,HairerLabbe,MouzardOuhabaz}
for similar constructions when $D$ is replaced by a more general
two-dimensional manifold (with or without boundary) or $\mbb R^2$.
See Section \ref{Section: Construction of H and u} (more specifically, Assumption \ref{Assumption: AH}) for more details on the
assumptions we make in this paper regarding the construction of $H_\ka$ and $u_\ka$
via \eqref{Equation: Smooth AH Limits}.

\subsection{Main Result}

In this paper, we are interested in the following broad problem:

\begin{question}
\label{Question: Effect of Noise}
Let $\la_1(H_\ka)\leq\la_2(H_\ka)\leq\cdots$ be the eigenvalues of $H_\ka$,
and let $\psi_n(H_\ka)$ ($n\geq1$) be the corresponding orthonormal eigenfunctions.
How are $\la_n(H_\ka)$, $\psi_n(H_\ka),$
and the time-evolution of $u_\ka(t,\cdot)$ affected when one transitions
from $\ka=0$ to $\ka>0$?
\end{question}

See Sections \ref{Section: Past 1} and \ref{Section: Past 2} for a survey of past results
concerning Question \ref{Question: Effect of Noise}.

The approach that we adopt to study this question in this paper is based on
the exponential trace of $H_\ka$ and the mass of $u_\ka$, which
are respectively defined as
\begin{align}
\label{Equation: Exponential Trace}
\msf T_\ka(t)=\mr{Tr}\big[\mr e^{-tH_\ka}\big]=\sum_{n=1}^\infty\mr e^{-t\la_n(H_\ka)},\qquad t>0
\end{align}
and
\begin{align}
\label{Equation: Mass}
\msf M_\ka(t)=\int_Du_\ka(t,x)\d x=\sum_{n=1}^\infty\mr e^{-t\la_n(H_\ka)}\left\langle\psi_n(H_\ka),\mbf 1_D\right\rangle^2,\qquad t>0.
\end{align}
More specifically, we are interested in the small $t$ asymptotics of these quantities, which,
in view of the series expansions on the right-hand sides of \eqref{Equation: Exponential Trace}
and \eqref{Equation: Mass}, relate to the
large-$n$ asymptotics of $\la_n(H_\ka)$ and $\left\langle\psi_n(H_\ka),\mbf 1_D\right\rangle^2$.

When $\ka=0$, \eqref{Equation: Exponential Trace}
and \eqref{Equation: Mass} are typically called the heat trace and heat content
on $D$, respectively. The $t\to0$ asymptotics of $\msf T_0(t)$ and $\msf M_0(t)$ have been the subject
of considerable study in the past 60 years. In particular, it is known that these asymptotics contain a lot of information
about the geometry of $D$; see, e.g., \cite{Gilkey} and \cite[Section 1.2]{Rozanova-PierratTeplyaevWinterZahle} for more details and
references.
In this context, our main result quantifies the effect of $\ka$ on these quantities as follows:

\begin{notation}
Given a Borel measurable set $K\subset\mbb R^2$, we let $\msf A(K)=\int_K1\d x$ denote
its area/Lebesgue measure.
\end{notation}

\begin{theorem}
\label{Theorem: Main}
Let $\ka>0$ be fixed, and
suppose that $H_\ka$ and $u_\ka$ are constructed as described in
Assumption \ref{Assumption: AH}. As $t\to0$, it holds that
\begin{align}
\label{Equation: Main Asymptotic 1}
\mbf E\big[\msf T_\ka(t)\big]&=\msf T_0(t)+\tfrac{\ka^2\msf{A}(D)}{4\pi^2}\log t+o(\log t)
\qquad\text{and}\qquad
\mbf{Var}\big[\msf T_\ka(t)\big]=O(1),\\
\label{Equation: Main Asymptotic 2}
\mbf E\big[\msf M_\ka(t)\big]&=\msf M_0(t)+\tfrac{\ka^2\msf{A}(D)}{2\pi}t\log t+o(t\log t)
\qquad\text{and}\qquad\mbf{Var}\big[\msf M_\ka(t)\big]=O(t^2).
\end{align}
\end{theorem}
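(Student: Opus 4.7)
The plan is to apply the Feynman--Kac formula to the smooth approximation $u_{\ka,\eps}$ and integrate out the Gaussian noise $\xi$, reducing the asymptotics to an analysis of 2D Brownian self-intersection local times. Starting from
\[u_{\ka,\eps}(t,x)=\mbf E^B_x\!\left[\exp\!\left(-\ka\int_0^t\xi_\eps(B_s)\d s\right)\mbf 1_{\tau_D>t}\right]\]
(with $B$ a Brownian motion started at $x$ and $\tau_D$ its first exit from $D$), taking $\mbf E$ over $\xi$ yields the Gaussian moment generating function
\[\mbf E[u_{\ka,\eps}(t,x)]=\mbf E^B_x\!\left[\exp\!\left(\tfrac{\ka^2}2 L^\eps_t(B)\right)\mbf 1_{\tau_D>t}\right],\quad L^\eps_t(B):=\int_0^t\!\int_0^t p_\eps(B_s-B_r)\d s\d r.\]
Absorbing $t\msf c_{\ka,\eps}$ into the exponent and sending $\eps\to 0$ replaces $L^\eps_t$ by the renormalized self-intersection $\Theta_t(B)$ of 2D Brownian motion on $[0,t]$. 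The trace case follows from $\msf T_\ka(t)=\int_D p_t^{H_\ka}(x,x)\d x$ and the standard bridge Feynman--Kac formula, giving
\[\mbf E[\msf T_\ka(t)]=\int_D p_t(0)\,\mbf E^{X}_{x\to x,t}\!\left[\exp\!\left(\tfrac{\ka^2}2\Theta_t(X)\right)\mbf 1_{\tau_D>t}\right]\d x,\]
with $X$ the Brownian bridge pinned at $x$ over $[0,t]$ and $p_t(0)=(2\pi t)^{-1}$.

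Next, a Brownian scaling argument $B_s=\sqrt t\,\tilde B_{s/t}$ yields $L^\eps_t(B)=t\,L^{\eps/t}_1(\tilde B)$; tracking the mismatch between $\log(1/\eps)$ and $\log(1/(\eps/t))$ in the renormalization produces the key identity
\[\Theta_t(B)=t\,\Theta_1(\tilde B)+\tfrac{t\log t}{\pi},\]
and similarly for bridges. Taylor-expanding $\exp(y)=1+y+R(y)$, the constant term reproduces $\msf M_0(t)$ (resp.\ $\msf T_0(t)$), and the linear term delivers the dominant correction: since $\mbf E[\Theta_1]$ and its bridge analog are finite constants,
\[\tfrac{\ka^2}2\int_D\mbf E^B_x[\Theta_t(B)\mbf 1_{\tau_D>t}]\d x=\tfrac{\ka^2\msf A(D)}{2\pi}\,t\log t+o(t\log t),\]
and the prefactor $p_t(0)=(2\pi t)^{-1}$ in the trace case converts $t\log t$ into $\log t/(2\pi^2)$, producing $\tfrac{\ka^2\msf A(D)}{4\pi^2}\log t$. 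The boundary contribution $\int_D\mbf E^B_x[\Theta_t\mbf 1_{\tau_D\leq t}]\d x$ is controlled by Cauchy--Schwarz against $\mbf E[\Theta_t^2]=O(t^2\log^2 t)$ and $\int_D\mbf P^B_x(\tau_D\leq t)\d x=o(1)$, while the Taylor remainder is bounded via $|R(y)|\leq y^2 e^{|y|}$ together with standard exponential-moment bounds for 2D renormalized self-intersection local times.

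The variance bounds come from the two-point Feynman--Kac formula with independent motions $B,B'$:
\[\mr{Cov}[u_\ka(t,x),u_\ka(t,y)]=\mbf E^{B,B'}_{x,y}\!\left[\exp\!\left(\tfrac{\ka^2}2(\Theta_t(B)+\Theta_t(B'))\right)\bigl(\exp(\ka^2 M_t)-1\bigr)\mbf 1_{\text{both survive}}\right],\]
where $M_t=\int_0^t\!\int_0^t\delta_0(B_s-B'_r)\d s\d r$ is the mutual intersection of two independent 2D Brownian motions (finite without renormalization). Combining $\exp(\ka^2 M_t)-1\leq\ka^2 M_t\exp(\ka^2 M_t)$ with Cauchy--Schwarz, exponential integrability of $\Theta_t$ and $M_t$ for small $t$, and
\[\int_{D^2}\mbf E^{B,B'}_{x,y}[M_t]\d x\d y\leq\int_0^t\!\int_0^t\int_{D^2}p_{s+r}(y-x)\d x\d y\d s\d r\leq\msf A(D)\,t^2,\]
yields $\mbf{Var}[\msf M_\ka(t)]=O(t^2)$; the bridge analog carries an extra $p_t(0)^2=(2\pi t)^{-2}$ factor from pinning, giving $\mbf{Var}[\msf T_\ka(t)]=O(1)$.

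The main obstacle, I expect, is controlling the Taylor remainder and the killing indicator $\mbf 1_{\tau_D>t}$ \emph{uniformly in $\eps$} before passing to the $\eps\to 0$ limit: renormalization removes only the first-moment divergence of $L^\eps_t$, so all higher-order contributions must be handled by sharp exponential-moment bounds on renormalized 2D self-intersection local times---for free motion and for bridges, and with or without Dirichlet killing on $\partial D$---combined with care near the boundary, where the killed propagator is small but where $\Theta_t$ does not localize.
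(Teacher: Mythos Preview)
Your approach is essentially the paper's: Feynman--Kac for the smoothed operator, integrate out $\xi$ to obtain approximate self- and mutual-intersection local times, pass to the renormalized limit, extract the deterministic $\tfrac{t\log t}{\pi}$ shift via Brownian scaling of the SILT, Taylor-expand, and control remainders with exponential-moment bounds for $\Theta_t$ and $M_t$. The paper organizes the expectation computation slightly differently---it pulls the deterministic factor $\mr e^{\ka^2 t\log t/2\pi}$ (bridge) or $\mr e^{\ka^2(t\log t-t)/2\pi}$ (motion) outside the expectation \emph{before} Taylor-expanding---but this is cosmetic, and your identification of the uniform-in-$\eps$ exponential moments as the main technical burden is exactly right.

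There is one genuine gap in your variance sketch. After applying $\mr e^{\ka^2 M_t}-1\le \ka^2 M_t\,\mr e^{\ka^2 M_t}$ and Cauchy--Schwarz to peel off the SILT and MILT exponentials, what remains is $\int_{D^2}\mbf E[M_t^{\,p}]^{1/p}\,\dd x_1\dd x_2$ for some $p>1$, not the first-moment integral $\int_{D^2}\mbf E[M_t]\le \msf A(D)\,t^2$ that you quote. The uniform moment scaling $\sup_{x_1,x_2}\mbf E[M_t^{\,p}]^{1/p}=O(t)$ only gives $O(t)$ after integrating over the bounded set $D^2$; the second factor of $t$ must come from the spatial integral, and for $p>1$ your heat-kernel computation does not deliver it. The paper closes this by first observing that $M_t=0$ almost surely on the event that the two paths remain at positive distance, then applying H\"older against the probability that either path wanders farther than $|x_1-x_2|/3$ from its starting point on $[0,t]$. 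That probability has a Gaussian tail $C\mr e^{-c|x_1-x_2|^2/t}$, and $\int_{D^2}\mr e^{-c|x_1-x_2|^2/t}\,\dd x=O(t)$ supplies the missing factor.
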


To the best of our knowledge, there are two past results that addressed this exact question
in the setting of planar domains:
Firstly, it was shown in \cite{Mouzard} that
\begin{align}
\label{Equation: Mouzard Law Intro}
\msf T_\ka(t)=\msf T_0(t)+o(t^{-1})=\tfrac{\msf{A}(D)}{2\pi}t^{-1}+o(t^{-1})\qquad\text{as }t\to0
\end{align}
when $\partial D$ is smooth. Then, \cite{MatsudaVanZuijlen} extended
\eqref{Equation: Mouzard Law Intro} to the case where $\partial D$ is Lipschitz.
See Section \ref{Section: Past 2} for more details,
including various extensions of \eqref{Equation: Mouzard Law Intro}
that exceed the setting of this paper (e.g., more general domains/manifolds
or boundary conditions).
In this context,
Theorem \ref{Theorem: Main} can be seen as a substantial refinement
of the estimate \eqref{Equation: Mouzard Law Intro}
for arbitrary planar domains. That is,
\eqref{Equation: Main Asymptotic 1} and \eqref{Equation: Main Asymptotic 2}
essentially state that
\begin{align}
\label{Equation: Informal Remainder Term 1}
\msf T_\ka(t)&=\msf T_0(t)+\tfrac{\ka^2\msf{A}(D)}{4\pi^2}\log t+o(\log t)&\text{as }t\to0,\\
\label{Equation: Informal Remainder Term 2}
\msf M_\ka(t)&=\msf M_0(t)+\tfrac{\ka^2\msf{A}(D)}{2\pi}t\log t+o(t\log t)&\text{as }t\to0,
\end{align}
with the important caveat that there may be some random fluctuations of
average size $O(1)$ and $O(t)$ in \eqref{Equation: Informal Remainder Term 1}
and \eqref{Equation: Informal Remainder Term 2} respectively.

\subsection{Outline of Applications}

Among other things, the asymptotics in \eqref{Equation: Informal Remainder Term 1}
and \eqref{Equation: Informal Remainder Term 2} suggest the following informal principle:
\begin{quote}
{\it"Any feature of $D$ that can be recovered from
the $t\to0$ asymptotics of $\msf T_0(t)$ and $\msf M_0(t)$ up to respective orders $O(\log t)$ and $O(t\log t)$
can also be recovered almost surely from $\msf T_\ka(t)$ and $\msf M_\ka(t)$ when $\ka>0$."}
\end{quote}
Using Theorem \ref{Theorem: Main}, we prove
two new results that support this conclusion,
which are motivated by spectral geometry and boundary fractal geometry:
\begin{enumerate}[(i)]
\item {\bf Spectral Geometry.} In {\bf Corollary \ref{Corollary: Spectral Geometry}}, we show that if
$\partial D$ is sufficiently regular, then $D$'s area and $\partial D$'s length
can be recovered almost surely from a single observation of $H_\ka$'s eigenvalues.
This improves Mouzard's Weyl law \cite{Mouzard}
in the case of bounded domains, which showed that $D$'s area can be recovered from $\la_n(H_\ka)$'s
first order asymptotics.
\item {\bf Boundary Fractal Geometry.} In {\bf Corollary \ref{Corollary: Fractal Dimension}}, we show that if
$D$ is simply connected and
$\partial D$ has a well-defined Minkowski dimension $\msf d_M(\partial D)\in[1,2)$, then we can recover
$\msf d_M(\partial D)$ almost-surely from $\msf M_\ka(t)$'s small $t$ asymptotics. This extends a similar result
for $\msf M_0(t)$ due to van den Berg \cite{VanDenBerg}.
\end{enumerate}
See Section \ref{Section: Motivation, Applications} for more details on the context and significance of these results.

In addition to these two applications, Theorem \ref{Theorem: Main} identifies exactly when one "begins to feel"
the presence of $\ka\xi$ in $H_\ka$'s large eigenvalue/eigenfunction asymptotics.
We present the following application of this identification:

\begin{enumerate}[(i)]
\setcounter{enumi}{2}
\item {\bf Variance Recovery.} The parameter $\ka^2$ can be recovered almost surely from a single observation of
$H_\ka$'s eigenvalues; see {\bf Corollary \ref{Corollary: Variance}} for details.
\end{enumerate}

This is rather surprising in light of the fact that, to the best of our knowledge,
$\ka^2$ cannot be recovered almost surely if $\xi$ is replaced by a more regular noise; see Section
\ref{Section: Open Problem Transition} for more details on this point. In fact, the
presence of logarithmic terms in $\msf T_\ka(t)$'s and $\msf M_\ka(t)$'s asymptotics is quite
unusual, since adding a more regular potential to the Laplacian typically induces power-law
corrections to the heat trace; e.g., \cite{SmithH}. Our method of proof shows that the
logarithmic terms emerge from the singularity of the noise $\xi$, and corresponds to the fact that the renormalization in
\eqref{Equation: Smooth AH Limits} is logarithmic in $\eps$;
see Remarks \ref{Remark: Renormalization 1}, \ref{Remark: Renormalization 2}, and \ref{Remark: Renormalization 3} for details.

\begin{remark}
In this context, it is interesting to note
that our results actually imply the presence of an infinite number of logarithmic terms
in $\msf T_\ka(t)$'s and $\msf M_\ka(t)$'s asymptotics. Indeed, the expectation asymptotics
in \eqref{Equation: Main Asymptotic 1} and \eqref{Equation: Main Asymptotic 2}
follow from the more general statements that, as $t\to0$, one has
\[\mbf E\big[\msf T_\ka(t)\big]=\mr e^{\ka^2t\log t/2\pi}\big(\msf T_0(t)+o(1)\big)
\qquad\text{and}\qquad
\mbf E\big[\msf M_\ka(t)\big]=\mr e^{\ka^2(t\log t-t)/2\pi}\big(\msf M_0(t)+o(t)\big).\]
See Section \ref{Section: Intersection Local Times Asymptotics} for more details.
\end{remark}

\subsection{Outline of Proof}
\label{Section: Outline of Proof}

Previous results regarding Question \ref{Question: Effect of Noise}
typically rely on the sophisticated analytic machinery used to construct
the AH's domain and the PAM's solution.
For instance, \cite{Mouzard} studies $\la_n(H_\ka)$'s large $n$ asymptotics using
variational methods, and \cite{BailleulDangMouzard} studies the same quantities
using $\mr e^{-tH_\ka}$'s analytic properties.

In contrast to this, the approach used in this paper is entirely probabilistic.
In fact, at a purely technical level,
our results (including the presence of logarithmic terms) can be viewed as a direct manifestation of the small-time
asymptotics of intersection local times of planar Brownian motions and bridges.
In order to explain this, we now go over a brief outline of the main steps in our proof of Theorem \ref{Theorem: Main}.

\subsubsection{Step 1: Construction of $H_\ka$ and $u_\ka$}

By referring to the constructions carried out in \cite{ChoukVanZuijlen,Labbe,MatsudaVanZuijlen,Mouzard},
in Assumption \ref{Assumption: AH} we take the existence of $H_\ka$ and its
renormalized limit in \eqref{Equation: Smooth AH Limits} for granted, with one specific choice
for the renormalization constant $\msf c_{\ka,\eps}$---see \eqref{Equation: Renormalization}.
We then construct $u_\ka(t,\cdot)$ using $H_\ka$'s semigroup.
We refer to Section \ref{Section: Construction of H and u}
for detailed references.

\begin{remark}
This step is the only potential input from regularity structures, paracontrolled calculus, or any alternative
analytical machinery in this paper.
\end{remark}

\subsubsection{Step 2: Feynman-Kac Formulas}

By combining Step 1 with some technical uniform integrability estimates, we obtain the following smooth approximations
for the expectations and covariances that appear in Theorem \ref{Theorem: Main}:

\begin{proposition}
\label{Proposition: Moments Prelimit}
Recall \eqref{Equation: Smooth AH} and \eqref{Equation: Smooth AH Limits}.
For every $\ka,\eps,t>0$, denote
\begin{align}
\label{Equation: Intro Regularized Moments}
\msf T_{\ka,\eps}(t)=\mr{Tr}\big[\mr e^{-tH_{\ka,\eps}}\big]
\qquad\text{and}\qquad
\msf M_{\ka,\eps}(t)=\int_Du_{\ka,\eps}(t,x)\d x.
\end{align}
For every $\ka>0$,
there exists a constant $\th_\ka>0$ such that
for every $t\in(0,\th_\ka)$,
\begin{align*}
\mbf E\big[\msf T_{\ka}(t)\big]=\lim_{\eps\to0}\mbf E\big[\msf T_{\ka,\eps}(t)\mr e^{-t\msf c_{\ka,\eps}}\big]
\quad\text{and}\quad
\displaystyle \mbf{Var}\big[\msf T_{\ka}(t)\big]=\lim_{\eps\to0}\mbf{Var}\big[\msf T_{\ka,\eps}(t)\mr e^{-t\msf c_{\ka,\eps}}\big];\\
\mbf E\big[\msf M_{\ka}(t)\big]=\lim_{\eps\to0}\mbf E\big[\msf M_{\ka,\eps}(t)\mr e^{-t\msf c_{\ka,\eps}}\big]
\quad\text{and}\quad
\displaystyle \mbf{Var}\big[\msf M_{\ka}(t)\big]=\lim_{\eps\to0}\mbf{Var}\big[\msf M_{\ka,\eps}(t)\mr e^{-t\msf c_{\ka,\eps}}\big].
\end{align*}
\end{proposition}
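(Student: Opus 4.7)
The strategy is to upgrade the convergence furnished by Assumption~\ref{Assumption: AH} from convergence in probability to $L^2$ convergence, valid for $t$ below a threshold $\th_\ka>0$. Once this is done, convergence of expectations (first moments) and variances (second moments) follows immediately.

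Two ingredients enter. Ingredient (a) is convergence in probability of $\msf T_{\ka,\eps}(t)\mr e^{-t\msf c_{\ka,\eps}}\to\msf T_\ka(t)$ and $\msf M_{\ka,\eps}(t)\mr e^{-t\msf c_{\ka,\eps}}\to\msf M_\ka(t)$ as $\eps\to0$; this is essentially baked into the construction described in Step~1. The resolvent-type convergence $H_{\ka,\eps}+\msf c_{\ka,\eps}\to H_\ka$ forces convergence of the shifted eigenvalues $\la_n(H_{\ka,\eps})+\msf c_{\ka,\eps}\to\la_n(H_\ka)$ and $L^2(D)$ convergence of $u_{\ka,\eps}(t,\cdot)\mr e^{-t\msf c_{\ka,\eps}}$ to $u_\ka(t,\cdot)$. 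Combined with a uniform tail bound in $n$ coming from a smooth Weyl-type control on $H_{\ka,\eps}+\msf c_{\ka,\eps}$, these yield the claimed probabilistic convergence of traces and of masses.

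Ingredient (b) is the main technical step: one must show that the renormalized prelimits are bounded in $L^{2+\delta}$ uniformly in $\eps$, for some $\delta>0$ and for $t<\th_\ka$. Here I would invoke the Feynman-Kac representation
\[
\msf M_{\ka,\eps}(t)=\int_D\mbf E_x\bigl[\mr e^{-\ka\int_0^t\xi_\eps(B_s)\dd s}\mbf 1\{\tau_D>t\}\bigr]\dd x,
\]
together with the analogous representation of $\msf T_{\ka,\eps}(t)$ as an integral over Brownian bridges on $[0,t]$. Taking $p$-th moments in $\xi$ and integrating out the Gaussian noise conditionally on the Brownian paths gives
\[
\mbf E\bigl[\bigl(\msf M_{\ka,\eps}(t)\mr e^{-t\msf c_{\ka,\eps}}\bigr)^p\bigr]
=\mr e^{-pt\msf c_{\ka,\eps}}\int_{D^p}\mbf E\Bigl[\exp\Bigl(\tfrac{\ka^2}{2}\sum_{i,j=1}^p\ms I_\eps(B^{(i)},B^{(j)})\Bigr)\prod_{i=1}^p\mbf 1\{\tau_D^{(i)}>t\}\Bigr]\,\dd\mbf x,
\]
where $\ms I_\eps(B^{(i)},B^{(j)})=\int_0^t\!\!\int_0^t p_\eps(B^{(i)}_s-B^{(j)}_r)\dd s\dd r$ is the smoothed mutual (or self-, when $i=j$) intersection local time across $p$ independent Brownian motions $B^{(1)},\dots,B^{(p)}$. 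The renormalization $\mr e^{-pt\msf c_{\ka,\eps}}$ is designed to compensate the diagonal $i=j$ contributions in the Varadhan sense, producing Varadhan-renormalized self-intersection local times with finite exponential moments, while the off-diagonal mutual intersection local times of independent planar Brownian motions/bridges admit finite exponential moments provided the coupling $\ka^2 t$ is small enough, by classical exponential integrability results for planar intersection local times. Together these give uniform $L^{2+\delta}$-boundedness for $t<\th_\ka$.

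With (a) and (b) in hand, de la Vall\'ee Poussin's criterion shows that the squares of the prelimit sequences are uniformly integrable, so convergence in probability lifts to convergence in $L^2$, from which the four identities of the proposition follow. The hardest part is clearly ingredient (b): the uniform exponential-moment estimate on the Varadhan-renormalized self-intersection and on the mutual intersection local times of planar Brownian paths. The threshold $\th_\ka$ in the statement is precisely the classical threshold for such exponential integrability and cannot be removed without substantially new ideas.
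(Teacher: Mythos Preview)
Your overall architecture---convergence in probability plus uniform integrability in $L^{2+\delta}$, then Vitali---matches the paper exactly, and your Ingredient~(b) is essentially identical to the paper's Lemma~\ref{Lemma: Moments Prelimit UI}: the Feynman--Kac representation, the Gaussian integration producing smoothed self- and mutual intersection local times, and the exponential-moment bounds on these (valid for $t<\th_\ka$) are precisely what the paper invokes.

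The gap is in Ingredient~(a). You assert that convergence in probability of the trace and the mass follows from eigenvalue/eigenfunction convergence together with ``a uniform tail bound in $n$ coming from a smooth Weyl-type control on $H_{\ka,\eps}+\msf c_{\ka,\eps}$.'' No such uniform-in-$\eps$ Weyl bound is assumed in Assumption~\ref{Assumption: AH}, nor is it obvious how to prove one: the naive min--max comparison $\la_n(H_{\ka,\eps})\geq\la_n(H_0)-\ka\|\xi_\eps\|_{L^\infty(D)}$ is not uniform because $\|\xi_\eps\|_{L^\infty}$ diverges as $\eps\to0$. The paper avoids this issue entirely. Instead, it proves (Lemma~\ref{Lemma: Hilbert-Schmidt Cauchy Sequence}) that the renormalized kernels $\mc K_{\ka,\eps}(t)\mr e^{-t\msf c_{\ka,\eps}}$ form a Cauchy sequence in expected Hilbert--Schmidt norm as $\eps,\tilde\eps\to0$; this is itself a Feynman--Kac/intersection-local-time computation, requiring the mixed covariance $\mbf E[\xi_\eps(x)\xi_{\tilde\eps}(y)]=p_{(\eps+\tilde\eps)/2}(x-y)$ and the non-renormalized exponential moment bound \eqref{Equation: Uniform Integrability of SILT - Non-Renormalized}. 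From this Cauchy property one extracts (via Borel--Cantelli) an almost-sure Hilbert--Schmidt limit $\mc K_\ka(t)$, identifies it with $\mr e^{-tH_\ka}$ using the eigenvalue/eigenfunction convergence from Assumption~\ref{Assumption: AH}, and then reads off convergence of the trace as $\|\mc K_{\ka,\eps}(t/2)\|_{L^2(D^2)}^2\to\|\mc K_\ka(t/2)\|_{L^2(D^2)}^2$ and of the mass as $\langle\mc K_{\ka,\eps}(t)\mbf 1_D,\mbf 1_D\rangle\to\langle\mc K_\ka(t)\mbf 1_D,\mbf 1_D\rangle$. So the machinery you built for (b) is reused in a second, less obvious place; the ``uniform tail'' shortcut you propose would need independent justification.
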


Proposition \ref{Proposition: Moments Prelimit} is proved in Section \ref{Section: Moments Prelimit}.
The significance of this result is as follows: Since $\xi_\eps$ is smooth, we can use the classical Feynman-Kac formula
(i.e., \eqref{Equation: F-K})
to write $\msf T_{\ka,\eps}(t)$ and $\msf M_{\ka,\eps}(t)$ in terms of simple expectations involving Brownian bridges
and motions in $\mbb R^2$. Thus, Proposition \ref{Proposition: Moments Prelimit} suggests that if we can somehow
make sense of the $\eps\to0$ limits of the renormalized moments of these Feynman-Kac formulas, then this will provide Feynman-Kac
formulas for the moments of $\msf T_\ka(t)$ and $\msf M_\ka(t)$. This seems especially plausible in light of
the results in \cite{GuXu}, which used this exact strategy to provide Feynman-Kac formulas for the moments
of the PAM on $\mbb R^2$.

In this context, the main result in the second step of our proof of Theorem \ref{Theorem: Main}
is to carry out this program. More specifically,
in Proposition \ref{Proposition: Feynman-Kac for T and M} we provide Feynman-Kac formulas
for the expectations and variances of $\msf T_{\ka,\eps}(t)$ and $\msf M_{\ka,\eps}(t)$ in terms of
intersection local times (see Section \ref{Section: SILT and MILT} for definitions and references),
which quantify how often Brownian motion and bridge
paths intersect themselves or each other. The logarithmic contributions
to \eqref{Equation: Main Asymptotic 1} and \eqref{Equation: Main Asymptotic 2}
emerge in this step, from the expected values of approximate self-intersection local times
(see Lemma \ref{Lemma: Smoothed Expectation of SILT}).

\subsubsection{Step 3: Intersection Local Times Asymptotics}

Once Step 2 is completed,
the proof of Theorem \ref{Theorem: Main} reduces to analyzing
the small $t$ asymptotics of the intersection local times that appear in the
Feynman-Kac formulas for $\msf T_{\ka}(t)$'s and $\msf M_{\ka}(t)$'s
moments. This analysis is carried out in Section \ref{Section: Intersection Local Times Asymptotics}.

\subsection{Organization}

The remainder of this paper is organized as follows:
\begin{enumerate}[$\bullet$]
\item In Section \ref{Section: Past, Motivation, Applications},
we survey past results related to Question \ref{Question: Effect of Noise},
and we state our applications of Theorem \ref{Theorem: Main}
 (i.e., Corollaries \ref{Corollary: Spectral Geometry}, \ref{Corollary: Fractal Dimension}, and \ref{Corollary: Variance}).
We take this opportunity to contextualize and motivate these applications.
\item In Section \ref{Section: Open Problems}, we provide a sample of the many interesting
open problems that emerge from our main result and applications.
\item In Sections \ref{Section: Construction of H and u}, \ref{Section: Step 2}, 
and \ref{Section: Intersection Local Times Asymptotics}, we respectively carry out Steps 1, 2, and 3
in the outline of proof in Section \ref{Section: Outline of Proof}.
\item In Section \ref{Section: SILT and MILT}, we first provide the precise definitions of
SILTs and MILTs. Then, we state (and in few cases, prove) technical results regarding the latter that are needed
in Sections \ref{Section: Step 2}
and \ref{Section: Intersection Local Times Asymptotics}.
\item Finally, in Section \ref{Section: Applications}, we prove our two applications of
Theorem \ref{Theorem: Main}, i.e., Corollaries \ref{Corollary: Spectral Geometry}, \ref{Corollary: Fractal Dimension}, and
\ref{Corollary: Variance}.
\end{enumerate}

\subsection*{Acknowledgements}

We thank William Wylie, Ralph Xu, Ruojing Jiang, and Jia Shi for multiple
helpful conversations and references on differential geometry.

We gratefully acknowledge Kasso Okoudjou for pointing
out that our main result can be used to recover the Minkowski dimension of a
fractal boundary (i.e., Corollary \ref{Corollary: Fractal Dimension}), and providing the
reference \cite{VanDenBerg}.

We would like to thank the anonymous referees for their very careful reading of the manuscript---in
particular the errors and typos they pointed out---and
their recommendations to improve the presentation.

\section{Past Results, Motivation, and Applications}
\label{Section: Past, Motivation, Applications}

\subsection{Past Results}

\subsubsection{Localization/Intermittency in Small Eigenvalues/Large Time}
\label{Section: Past 1}

The majority of prior works on Question \ref{Question: Effect of Noise} concern
the effect of $\ka$ on the small eigenvalues and eigenfunctions
of the AH, or the large-time asymptotics of the PAM. These two
problems are closely related thanks to the spectral expansion
\begin{align}
\label{Equation: Spectral Expansion}
u_\ka(t,\cdot)=\sum_{n=1}^\infty\mr e^{-t\la_n(H_\ka)}\big\langle\psi_n(H_\ka),\mbf 1_D\big\rangle\psi_n(H_\ka),\qquad t>0.
\end{align}
Indeed, the leading order contributions
to \eqref{Equation: Spectral Expansion} as $t\to\infty$ are obviously
determined by $\mr e^{-t\la_n(H_\ka)}$'s magnitude and $\psi_n(H_\ka)$'s shape for small $n$.

The main motivation for studying the effect of $\ka$ on
$H_\ka$'s smallest eigenvalues comes from the localization phenomenon,
first described in lattice models by Anderson
\cite{Anderson} (see, e.g., \cite{Stolz} for a survey).
That is, for small $n$, it is expected that the eigenfunction mass
$|\psi_n(H_\ka)|^2$ transitions from being delocalized when $\ka=0$
to being localized when $\ka>0$,
with the latter effect being increasingly pronounced as $\ka$ gets larger.
We refer to \cite[Figures 1 and 2]{AllezChouk} for an illustration in the case where $D$
is a square.
This phenomenon is very well understood in the one-dimensional AH (on $\mbb R$ or an interval),
e.g., \cite{DumazLabbeLoc1,DumazLabbeLoc2}.
Related results for the two-dimensional AH include asymptotics of $H_\ka$'s lowest eigenvalues
\cite{AllezChouk,ChoukVanZuijlen,HsuLabbe,Labbe}, and computations of $H_\ka$'s
density of states \cite{MatsudaVanZuijlen,Matsuda}.

For the PAM, one is interested in the large-time asymptotics of $u_\ka(t,\cdot)$
due to the intermittency phenomenon, first described in lattice models by
G\"artner and Molchanov \cite{GartnerMolchanov,GartnerMolchanov2}
(see, e.g., \cite{KonigBook} for a survey).
That is, as $t\to\infty$, all heat rapidly escapes $D$ through the boundary, whereby $u_0(t,\cdot)\to0$
at rate $\mr e^{-t\la_1(H_0)}$.
In contrast, the PAM's solution $u_\ka(t,\cdot)$ for $\ka>0$ develops
sharp and narrow peak-like formations (which correspond to $\psi_n(H_\ka)'s$
localized shapes)
that grow exponentially fast.
Prior works in this direction concerning the PAM on $\mbb R^2$
or square domains
include studies of $u_\ka(t,\cdot)$'s almost-sure asymptotics
\cite{KonigPerkowskiVanZuijlen} and of $u_\ka(t,\cdot)$'s moments \cite{BassChen,ChenDeyaOuyangTindel,GLGLTAMS}.

\subsubsection{Delocalization in Large Eigenvalues/Small Time}
\label{Section: Past 2}

Alternatively, one might ask how the large eigenvalues and eigenfunctions of
$H_\ka$ are affected by the transition from $\ka=0$ to $\ka>0$. In view of
\eqref{Equation: Spectral Expansion}, this is equivalent to studying
$u_\ka(t,\cdot)$ as $t\to0$. In sharp contrast to localization and intermittency,
the phenomenon of interest in this setting is delocalization. That is, one expects that despite the
presence of the random noise, the behavior of $\la_n(H_\ka),\psi_n(H_\ka)$ for large $n$
and $u_\ka(t,x)$ for small $t$ is not significantly altered by the transition from $\ka=0$ to $\ka>0$.
See \cite{DumazLabbeLoc1,DumazLabbeDeloc1,DumazLabbeDeloc2} for a comprehensive analysis of
the transition from localization to delocalization in the one-dimensional AH,
wherein $H_\ka$'s spectrum becomes increasingly similar to that of $H_0$ at large energies
(i.e., \cite[Theorem 1.7 and Remark 1]{DumazLabbeDeloc1}).
Moving back to the setting of this paper, to the best of our knowledge, there are only three previous works of this nature
that apply to the AH on two- or three-dimensional spaces:

In \cite{Mouzard}, Mouzard consructed $H_\ka$ on a certain class
of two-dimensional compact Riemannian manifolds $M$, which includes bounded domains with smooth
boundary as a special case. Using this construction, he then proved that for any $\ka\geq0$,
\begin{align}
\label{Equation: Mouzard's Weyl Law}
\#\big(\{n\geq1:\la_n(H_\ka)\leq\la\}\big)=\tfrac{\msf{A}(M)}{2\pi}\la+o(\la)\qquad\text{as }\la\to\infty
\end{align}
almost surely, where $\#(S)$ denotes the cardinality of a set $S$,
and $\msf{A}(M)$ denotes the Riemann volume of the manifold $M$.
When $\ka=0$, this reduces to the classical Weyl law
for the Laplacian eigenvalues \cite{Weyl}. In particular, \eqref{Equation: Mouzard's Weyl Law} showed that
the leading order term in $\la_n(H_\ka)$'s asymptotics remains the same
whether $\ka=0$ or $\ka>0$.

\begin{remark}
Using standard Abelian/Tauberian theorems
(e.g., \cite[Theorem 2, Page 445]{Feller}), \eqref{Equation: Mouzard's Weyl Law} is equivalent
to the trace asymptotic
\begin{align}
\label{Equation: Mouzard's Weyl Law Abelian}
\mr{Tr}\big[\mr e^{-tH_\ka}\big]=\tfrac{\msf{A}(M)}{2\pi}t^{-1}+o(t^{-1})\qquad\text{as }t\to0
\end{align}
for all $\ka\geq0$. If we restrict to the special case where $D$ is a bounded domain,
then this implies \eqref{Equation: Mouzard Law Intro}.
\end{remark}

Next, in \cite{BailleulDangMouzard}, Bailleul, Dang, and Mouzard provided
several extensions and refinements of the results in \cite{Mouzard} in the special case of boundaryless manifolds.
Among many other things,
they proved the following:
Consider $H_\ka$ on some closed Riemannian compact
surface $M$. For every $\eps>0$,
\begin{align}
\label{Equation: Tauberian Weyl Law}
\mr{Tr}\big[\mr e^{-tH_\ka}\big]=\tfrac{\msf{A}(M)}{2\pi}t^{-1}+O(t^{-1/2-\eps})\qquad\text{as }t\to0.
\end{align}
almost surely---see the first display in \cite[Section 4.4]{BailleulDangMouzard}.

Finally, in \cite{MatsudaVanZuijlen}, Matsuda and van Zuijlen constructed
$H_\ka$ with general singular noise on bounded domains $D\subset \mbb R^d$ ($d\geq1$) such that $\partial D$ is Lipschitz,
with either Dirichlet or Neumann boundary condition.
Then, on the way to calculating $H_\ka$'s density of states, they extended Mouzard's Weyl law
(i.e., \eqref{Equation: Mouzard's Weyl Law} and \eqref{Equation: Mouzard's Weyl Law Abelian})
for all of these models;
see \cite[Proposition 5.17]{MatsudaVanZuijlen}.

\begin{remark}
\label{Remark: Generality}
In light of the generality of these results, it is natural to ask
if Theorem \ref{Theorem: Main} and its corollaries (the latter of which are stated below
in Section \ref{Section: Motivation, Applications}) are true in the same generality.
The limitations on the setting considered in this paper comes from our method
of proof. We expect that, up to overcoming some potentially challenging technical difficulties
(e.g., Open Problem \ref{Open Problem: General SILT and MILT}),
the proof method in this paper could be extended to some, but not all of the
settings studied in \cite{BailleulDangMouzard,Matsuda,MatsudaVanZuijlen}.
See Section \ref{Section: Open Problems - General} for more details on this point.
\end{remark}

\subsection{Motivation and Applications}
\label{Section: Motivation, Applications}

The Weyl law stated in \eqref{Equation: Tauberian Weyl Law}
naturally leads to a tantalizing open-ended question,
which was the main motivation for our work:

\begin{question}
\label{Question: Motivation}
Are there more features of the Dirichlet AH/PAM on $D$ (i.e., beyond the Weyl law)
that remain unchanged when going from $\ka=0$ to $\ka>0$? More generally, when
do we begin to "feel" the presence of $\ka>0$ in the large-$n$ asymptotics of
the eigenvalues $\la_n(H_\ka)$?
\end{question}

In this context, the applications of our result are inspired by some of the most important classical
developments regarding the $t\to0$ asymptotics of $\msf T_0(t)$ and $\msf M_0(t)$:

\subsubsection{Spectral Geometry}

Our first application is motivated by the
classical fact that if $\partial D$ is sufficiently regular, then an impressive amount of information about $D$'s geometry can be
recovered from $H_0$'s eigenvalues alone.
For instance, it has been known since the 1950's (and conjectured since the 1910's)
that $H_0$'s eigenvalues also determine $D$'s boundary length:

\begin{theorem}[\cite{Kac,Pleijel}]
\label{Theorem: Smooth Heat Trace}
If $\partial D$ is sufficiently regular (e.g., smooth), then
\begin{align}
\label{Equation: Smooth Heat Trace}
\msf T_0(t)=\tfrac{\msf{A}(D)}{2\pi}t^{-1}
-\tfrac{\msf{L}(\partial D)}{4\sqrt{2\pi}}t^{-1/2}+O(1)\qquad\text{as }t\to0,
\end{align}
where $\msf L(\partial D)$ is the length of the curve
$\partial D$.
\end{theorem}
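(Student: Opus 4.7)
The plan is to follow the classical probabilistic argument of Kac, combining the Brownian bridge representation of the Dirichlet heat kernel with a local half-plane comparison near $\partial D$. I would begin by writing
\[\msf T_0(t)=\int_D p^D_t(x,x)\d x=p_t(0)\int_D\mbf P^{BB,t}_x\big[\tau_{\partial D}>t\big]\d x,\]
where $p^D_t$ is the Dirichlet heat kernel on $D$, $\mbf P^{BB,t}_x$ denotes the law of a Brownian bridge of length $t$ from $x$ to $x$, and $\tau_{\partial D}$ is its first hitting time of $\partial D$. Fix $\eta\in(0,1/2)$ and decompose $D=D^{in}_t\sqcup D^{bd}_t$, where $D^{in}_t:=\{x\in D:\mr{dist}(x,\partial D)>t^{1/2-\eta}\}$. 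Standard exit-time large-deviation bounds for the Brownian bridge give $\mbf P^{BB,t}_x[\tau_{\partial D}\leq t]=O(\mr e^{-ct^{-2\eta}})$ uniformly on $D^{in}_t$, so the bulk contributes $p_t(0)\msf A(D)+O(1)$ after absorbing the boundary-strip area $O(t^{1/2-\eta})$. This yields the leading term $\frac{\msf A(D)}{2\pi t}$ in \eqref{Equation: Smooth Heat Trace}.

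For the boundary strip $D^{bd}_t$, I would parameterize $x\leftrightarrow(y,s)$ via Fermi coordinates with $y\in\partial D$ arc length and $s\in[0,t^{1/2-\eta})$ the signed distance to $\partial D$; smoothness of $\partial D$ guarantees this is a diffeomorphism with Jacobian $1+O(s)$. The key computation is to compare $p^D_t(x,x)$ with the Dirichlet heat kernel on the tangent half-plane $H_y$ at the foot point $y\in\partial D$, which by reflection is explicit:
\[p^{H_y}_t\big((y,s),(y,s)\big)=\tfrac{1}{2\pi t}\big(1-\mr e^{-2s^2/t}\big).\]
Subtracting the free-space diagonal $p_t(0)$ and using $\int_0^{\infty}\mr e^{-2s^2/t}\d s=\frac{1}{2}\sqrt{\pi t/2}$, the boundary correction evaluates to
\[-\int_{\partial D}\int_0^{\infty}\tfrac{\mr e^{-2s^2/t}}{2\pi t}\d s\d y=-\tfrac{\msf L(\partial D)}{4\sqrt{2\pi}}\,t^{-1/2},\]
which is precisely the second term in \eqref{Equation: Smooth Heat Trace}.

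The main obstacle is quantifying the error made when replacing $p^D_t(x,x)$ by $p^{H_y}_t(x,x)$ throughout the boundary strip. This requires a uniform estimate showing that the bridge exit probabilities from $D$ and from its tangent half-plane $H_y$ differ by an amount which, after integration against $\d s\d y$ and summation with the Jacobian correction $O(s)$, contributes only $O(1)$ to the trace. The discrepancy is controlled by the second fundamental form of $\partial D$ at $y$, which is where the regularity hypothesis (e.g.~$\partial D$ of class $C^3$) enters: it legitimizes a second-order Taylor expansion of $\partial D$ with uniformly bounded remainder, and it also legitimizes ignoring bridge excursions of amplitude exceeding $t^{1/2-\eta}$ via a Gaussian tail bound. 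Adding the bulk contribution, the half-plane boundary term, the Jacobian correction, and this curvature remainder yields \eqref{Equation: Smooth Heat Trace}.
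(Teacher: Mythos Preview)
The paper does not give its own proof of this statement: Theorem~\ref{Theorem: Smooth Heat Trace} is quoted as a classical result and attributed to Kac and Pleijel via the citations \cite{Kac,Pleijel}, with no proof environment following it. So there is nothing in the paper to compare your argument against.

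That said, your proposal is a faithful outline of Kac's original probabilistic proof: the Feynman--Kac/bridge representation of the diagonal Dirichlet heat kernel, the bulk/boundary-strip decomposition, and the half-plane comparison via the reflection principle are exactly the ingredients in \cite{Kac}. Your computation of the boundary correction $-\tfrac{\msf L(\partial D)}{4\sqrt{2\pi}}t^{-1/2}$ is correct. The one place where your sketch is a bit thin is the claim that the discrepancy between $p^D_t(x,x)$ and $p^{H_y}_t(x,x)$ integrates to $O(1)$ over the strip; this is true, but making it rigorous requires either the ``principle of not feeling the boundary'' with a quantitative curvature bound, or a careful coupling of the bridge to the flattened half-plane problem. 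As written you have identified the right mechanism (second fundamental form, $C^3$ regularity) but not actually carried out the estimate. For the purposes of this paper, however, the theorem is used as a black box, so a citation suffices.
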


\begin{remark}
We note that \eqref{Equation: Smooth Heat Trace} is not the full extent of the information about $D$ that can be
recovered from $\msf T_0(t)$; see Section \ref{Section: Open Problem - Complete Expansions} for more details.
\end{remark}

It is thus natural to wonder if one could improve \eqref{Equation: Mouzard's Weyl Law}/\eqref{Equation: Mouzard's Weyl Law Abelian} to such
an extent that $\msf{L}(\partial D)$ or more could be recovered from $\la_n(H_\ka)$'s eigenvalues
when $\ka>0$.
Our first application of Theorem \ref{Theorem: Main} is to show that this is indeed the case:

\begin{corollary}
\label{Corollary: Spectral Geometry}
Suppose that the hypotheses in Theorem \ref{Theorem: Main}
hold, and that $\msf T_0(t)$ has the asymptotic expansion in \eqref{Equation: Smooth Heat Trace}.
Let $t_1>t_2>\cdots>0$ be a vanishing sequence of positive numbers.
The following holds for any fixed $\ka>0$:
\begin{enumerate}[(1)]
\item If $t_n\leq cn^{-1/2-\eps}$ for some fixed $c,\eps>0$, then
\[\lim_{n\to\infty}2\pi t_n\,\msf T_\ka(t_n)=\msf{A}(D)\qquad\text{almost surely}.\]
\item If $t_n\leq cn^{-1-\eps}$ for some fixed $c,\eps>0$, then
\[\lim_{n\to\infty}4\sqrt{2\pi t_n}\left(\tfrac{\msf{A}(D)}{2\pi}t_n^{-1}-\msf T_\ka(t_n)\right)=\msf{L}(\partial D)\qquad\text{almost surely}.\]
\end{enumerate}
\end{corollary}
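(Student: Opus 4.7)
The plan is to decompose $\msf T_\ka(t_n) = \mbf E[\msf T_\ka(t_n)] + F_n$, where $F_n := \msf T_\ka(t_n) - \mbf E[\msf T_\ka(t_n)]$ is the random fluctuation. I would then compute the deterministic mean explicitly using Theorem \ref{Theorem: Main} combined with the classical expansion \eqref{Equation: Smooth Heat Trace}, and control $F_n$ pathwise along the sequence $(t_n)$ using the variance bound $\mbf{Var}[\msf T_\ka(t)] = O(1)$ from \eqref{Equation: Main Asymptotic 1} together with Chebyshev's inequality and the Borel--Cantelli lemma.

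For the deterministic part, combining \eqref{Equation: Main Asymptotic 1} with \eqref{Equation: Smooth Heat Trace} gives
\[\mbf E\big[\msf T_\ka(t)\big] = \tfrac{\msf A(D)}{2\pi}\,t^{-1} - \tfrac{\msf L(\partial D)}{4\sqrt{2\pi}}\,t^{-1/2} + \tfrac{\ka^2\msf A(D)}{4\pi^2}\log t + o(\log t)\]
as $t\to 0$. For part~(1), multiplying through by $2\pi t_n$ sends the boundary, logarithmic, and error terms into $O(t_n^{1/2})$, $O(t_n\log t_n)$, and $o(t_n\log t_n)$ respectively, so $2\pi t_n\,\mbf E[\msf T_\ka(t_n)]\to \msf A(D)$. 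For part~(2), I would instead form $\tfrac{\msf A(D)}{2\pi}t_n^{-1} - \mbf E[\msf T_\ka(t_n)] = \tfrac{\msf L(\partial D)}{4\sqrt{2\pi}}t_n^{-1/2} + O(\log t_n)$, and multiplying by $4\sqrt{2\pi t_n}$ yields $\msf L(\partial D)$ up to a deterministic remainder of size $t_n^{1/2}|\log t_n|\to 0$.

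To handle the fluctuations, Chebyshev's inequality together with $\mbf{Var}[\msf T_\ka(t)] = O(1)$ yields, for any fixed $\delta>0$ and $n$ large,
\[\mbf P\big(|F_n| > \delta/t_n\big)\leq C\, t_n^2/\delta^2 \qquad\text{and}\qquad \mbf P\big(|F_n| > \delta/\sqrt{t_n}\big)\leq C\, t_n/\delta^2.\]
Under the hypothesis $t_n \leq cn^{-1/2-\eps}$ of part~(1), $\sum_n t_n^2 \leq c^2\sum_n n^{-1-2\eps}<\infty$; under $t_n\leq cn^{-1-\eps}$ of part~(2), $\sum_n t_n \leq c\sum_n n^{-1-\eps}<\infty$. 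A Borel--Cantelli argument then yields $|F_n| = o(1/t_n)$ and $|F_n| = o(1/\sqrt{t_n})$ almost surely, so the random corrections $2\pi t_n F_n$ and $4\sqrt{2\pi t_n}\,F_n$ vanish almost surely along $(t_n)$; combining with the deterministic limits completes both parts.

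I do not anticipate any substantive obstacle here: the argument is a direct consequence of Theorem \ref{Theorem: Main} once the classical expansion \eqref{Equation: Smooth Heat Trace} is granted. The only step requiring care is matching the sparsity thresholds $n^{-1/2-\eps}$ and $n^{-1-\eps}$ to the orders produced by Chebyshev applied to the $O(1)$ variance bound, but these numerical constraints drop out immediately from the calculation and indeed explain why the stated rates are natural.
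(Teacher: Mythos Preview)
Your proposal is correct and follows essentially the same approach as the paper: both arguments combine the expectation asymptotic from Theorem~\ref{Theorem: Main} and \eqref{Equation: Smooth Heat Trace} with a Chebyshev/Borel--Cantelli argument driven by the $O(1)$ variance bound. The only cosmetic difference is that the paper uses a shrinking threshold $t_n^{\eta/2}$ (requiring $\sum_n t_n^{2-\eta}<\infty$ for some small $\eta>0$) whereas you use a fixed threshold $\de/t_n$ for each $\de>0$ (requiring $\sum_n t_n^2<\infty$), but under the stated decay hypotheses on $t_n$ these are equivalent and your version is arguably cleaner.
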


See Section \ref{Section: Proof of Spectral Geometry} for a proof.

\subsubsection{Boundary Fractal Geometry}

Our second application is inspired by the following seminal result
due to van den Berg:

\begin{theorem}[\cite{VanDenBerg}]
\label{Theorem: Fractal Dimension Heat Content}
Suppose that $D$ is simply connected and that $\partial D$ is Minkowski nondegenerate with
Minkowski dimension $\msf d_M(\partial D)\in[1,2)$. That is,
\[0<\liminf_{r\to0}\frac{\msf{A}(\partial D_r)}{r^{2-\msf d_M(\partial D)}}\leq
\limsup_{r\to0}\frac{\msf{A}(\partial D_r)}{r^{2-\msf d_M(\partial D)}}<\infty,\]
where we define
\[\partial D_r=\left\{x\in\mbb R^2:\inf_{y\in\partial D}|x-y|<r\right\}.\]
There exists constants $c\in(1,\infty)$ and $\th>0$ such that
\begin{align}
\label{Equation: Fractal Dimension Heat Content}
c^{-1}t^{1-\msf d_M(\partial D)/2}\leq\msf{A}(D)-\msf M_0(t)\leq ct^{1-\msf d_M(\partial D)/2},\qquad t\in(0,\th).
\end{align}
\end{theorem}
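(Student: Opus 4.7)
The starting point is the probabilistic representation of the Dirichlet heat semigroup on $D$. If $(W_s)_{s\geq 0}$ denotes a standard planar Brownian motion and $\tau=\inf\{s\geq 0:W_s\in\partial D\}$ its first hitting time of $\partial D$, then $u_0(t,x)=\mbf P_x[\tau>t]$ for $x\in D$, so that
\[
\msf A(D)-\msf M_0(t)=\int_D\mbf P_x[\tau\leq t]\,dx.
\]
Writing $d(x):=\mr{dist}(x,\partial D)$, the problem thereby reduces to quantifying how much of $D$ sits close enough to $\partial D$ for Brownian motion to reach it by time $t$, weighted by the Gaussian profile of those hitting probabilities.

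For the upper bound, I would compare $\tau$ from below by the exit time of the open ball $B(x,d(x))\subset D$. The classical Gaussian estimate then gives $\mbf P_x[\tau\leq t]\leq C_1\mr e^{-d(x)^2/(C_2 t)}$. A layer-cake/Fubini argument rewrites the spatial integral as
\[
\int_D\mr e^{-d(x)^2/(C_2 t)}\,dx=\int_0^\infty\tfrac{2r}{C_2 t}\,\mr e^{-r^2/(C_2 t)}\,\mu_D(r)\,dr,
\]
where $\mu_D(r):=\msf A(\{x\in D:d(x)\leq r\})\leq\msf A(\partial D_r)\leq C_3 r^{2-\msf d_M(\partial D)}$ by the upper Minkowski bound. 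The substitution $u=r/\sqrt t$ immediately yields a bound of order $t^{1-\msf d_M(\partial D)/2}$.

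For the lower bound, the simple-connectedness of $D$ is essential, and I would invoke the Beurling projection theorem for planar simply connected domains. This tool ensures that for any such $D$ and any $x\in D$, the hitting probability of $\partial D$ from $x$ dominates the corresponding hitting probability when $\partial D$ is replaced by a radial slit issuing from a nearest boundary point. The upshot is the existence of constants $c_0,c_1>0$, depending only on the planar geometry, such that $\mbf P_x[\tau\leq t]\geq c_0$ whenever $d(x)\leq c_1\sqrt t$. Combining this with the lower Minkowski bound $\msf A(\partial D_r)\geq C_4 r^{2-\msf d_M(\partial D)}$ and with the claim that a positive fraction of $\partial D_r$ lies inside $D$ (see below) then produces the matching lower bound of the stated order.

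The most delicate point is the geometric assertion $\msf A(\partial D_r\cap D)\gtrsim\msf A(\partial D_r)$ for small $r>0$. Simple-connectedness is crucial here: without it, a tiny ``puncture'' in $D$ could contribute to $\partial D_r$ purely from the interior side, decoupling $\msf A(\partial D_r)$ from $\msf A(\partial D_r\cap D^c)$ and spoiling any symmetric split. I expect this step to be handled via planar topology, e.g.\ by parametrizing $\partial D$ as a prime-end boundary and using the conformal equivalence of $D$ with the unit disk to track the inner tubular neighborhood at scale $r$. Quoting the Beurling projection estimate in a form uniform across scales $r\to 0$ is another point requiring care, though it is standard in the complex-analytic literature.
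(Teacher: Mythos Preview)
The paper does not prove this theorem; it is stated with a citation to \cite{VanDenBerg} and used as a black box. So there is no in-paper argument to compare against, and your task was really to reconstruct van den Berg's proof.

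Your sketch is essentially the right one and matches the strategy in the cited reference. The probabilistic identity $\msf A(D)-\msf M_0(t)=\int_D\mbf P_x[\tau\le t]\,dx$, the Gaussian upper bound via the inscribed ball plus layer-cake, and the lower bound via a Beurling-type estimate exploiting that $D^c$ is connected (this is where simple connectedness enters) are exactly the ingredients van den Berg uses. Your identification of the Beurling projection theorem as the tool giving a uniform lower bound $\mbf P_x[\tau\le t]\ge c_0$ on $\{d(x)\le c_1\sqrt t\}$ is correct: since $D^c$ is connected and unbounded, a nearest boundary point to $x$ is joined within $D^c$ to $\partial B(x,2d(x))$, and Beurling compares hitting this continuum to hitting a radial slit, yielding a universal constant.

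The point you flag as ``most delicate''---passing from the two-sided tube $\partial D_r$ to the inner tube $\partial D_r\cap D$---is genuine, and your instinct that simple connectedness is needed is right (a punctured disk is the standard counterexample). However, your proposed mechanism (prime ends, conformal parametrization) is heavier than what is actually used. In van den Berg's argument this is handled more directly: for bounded simply connected $D$ one shows that the inner and outer one-sided Minkowski contents are comparable, essentially because reflection/projection arguments along rays from boundary points into $D^c$ (which exist by connectedness of the complement) control $\msf A(\partial D_r\setminus D)$ by $\msf A(\partial D_r\cap D)$ up to a constant. You should expect an elementary covering argument here rather than conformal machinery. Apart from this, your plan is sound.
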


Thus, the small $t$ asymptotics of $\msf M_0(t)$ uniquely identify $\msf d_M(\partial D)$.
Our main result implies that this can also be done with $\msf M_\ka(t)$:
\begin{corollary}
\label{Corollary: Fractal Dimension}
Suppose that the hypotheses in Theorem \ref{Theorem: Main}
hold, and that $\msf M_0(t)$ satisfies \eqref{Equation: Fractal Dimension Heat Content}.
Let $t_1>t_2>\cdots>0$ be a vanishing sequence of positive numbers.
If $t_n\leq cn^{-1/\msf d_M(\partial D)-\eps}$ for some fixed $c,\eps>0$,
then for every $\ka>0$,
\[\lim_{n\to\infty}2-2\frac{\log\big(\msf{A}(D)-\msf M_\ka(t_n)\big)}{\log t_n}=\msf d_M(\partial D)\qquad\text{almost surely}.\]
\end{corollary}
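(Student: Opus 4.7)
The plan is to combine Theorem \ref{Theorem: Main} with van den Berg's two-sided bound \eqref{Equation: Fractal Dimension Heat Content} through a Borel--Cantelli concentration argument. Throughout, write $d:=\msf d_M(\partial D)\in(1,2)$, so that $1-d/2\in(0,\tfrac12)$; the key numerical fact is $t|\log t|=o(t^{1-d/2})$ as $t\to 0$, which holds because $d>0$.

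First, I would transfer \eqref{Equation: Fractal Dimension Heat Content} from $\msf M_0(t)$ to $\mbf E[\msf M_\ka(t)]$. Theorem \ref{Theorem: Main} gives $\mbf E[\msf M_\ka(t)]-\msf M_0(t)=\tfrac{\ka^2\msf A(D)}{2\pi}t\log t+o(t\log t)$, which is $o(t^{1-d/2})$ by the numerical fact above. Combining this with \eqref{Equation: Fractal Dimension Heat Content} produces constants $\tilde c\in(1,\infty)$ and $\th'>0$ with
\[
\tilde c^{-1}\,t^{1-d/2}\;\leq\;\msf A(D)-\mbf E[\msf M_\ka(t)]\;\leq\;\tilde c\,t^{1-d/2},\qquad t\in(0,\th').
\]

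Second, I would control the stochastic fluctuation via Chebyshev's inequality applied to the bound $\mbf{Var}[\msf M_\ka(t)]=O(t^2)$. For any fixed $r\in(0,\tilde c^{-1})$,
\[
\mbf P\bigl(|\msf M_\ka(t_n)-\mbf E[\msf M_\ka(t_n)]|>r\,t_n^{1-d/2}\bigr)\;\leq\;O\bigl(r^{-2}t_n^d\bigr).
\]
The hypothesis $t_n\leq cn^{-1/d-\eps}$ gives $t_n^d\leq c^d n^{-1-\eps d}$, which is summable precisely because $\eps d>0$. Borel--Cantelli then ensures $|\msf M_\ka(t_n)-\mbf E[\msf M_\ka(t_n)]|\leq r\,t_n^{1-d/2}$ almost surely for all sufficiently large $n$.

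Combining the two steps produces almost sure constants $C\in(1,\infty)$ and $N<\infty$ with
\[
C^{-1}t_n^{1-d/2}\;\leq\;\msf A(D)-\msf M_\ka(t_n)\;\leq\;C\,t_n^{1-d/2},\qquad n\geq N,
\]
so in particular $\msf A(D)-\msf M_\ka(t_n)>0$ eventually. Taking logarithms and dividing by $\log t_n\to-\infty$ yields $\log(\msf A(D)-\msf M_\ka(t_n))/\log t_n\to 1-d/2$ almost surely, whence $2-2(1-d/2)=d$, as required. The only nontrivial points are (i) that the deterministic $t\log t$ correction in Theorem \ref{Theorem: Main} is strictly lower order than $t^{1-d/2}$, which uses $d>1$, and (ii) that the variance decay $O(t^2)$ combined with the hypothesis $t_n\leq cn^{-1/d-\eps}$ yields a summable Chebyshev tail, which uses $\eps>0$. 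Both checks are essentially immediate, so once Theorem \ref{Theorem: Main} is in hand there is no real obstacle; the careful tuning is already built into the hypothesis on $t_n$.
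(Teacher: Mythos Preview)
Your proposal is correct and follows essentially the same route as the paper's own proof: both transfer the two-sided bound \eqref{Equation: Fractal Dimension Heat Content} from $\msf M_0$ to $\mbf E[\msf M_\ka]$ via the expectation asymptotic in Theorem \ref{Theorem: Main} (using $t\log t=o(t^{1-d/2})$), then control $\msf M_\ka(t_n)-\mbf E[\msf M_\ka(t_n)]$ by Chebyshev plus Borel--Cantelli with the variance bound $O(t^2)$ and the summability of $t_n^{d}$. One small cosmetic point: the fact $t|\log t|=o(t^{1-d/2})$ only needs $d>0$, as you correctly state early on, so your later remark ``which uses $d>1$'' slightly overstates the requirement.
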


See Section \ref{Section: Proof of Fractal Dimension} for a proof.

\begin{remark}
There exists asymptotics of $\msf{A}(D)-\msf M_0(t)$ that are more precise
than \eqref{Equation: Fractal Dimension Heat Content} when $\partial D$ is
a specific fractal or regular;
see \cite[Section 1.2]{Rozanova-PierratTeplyaevWinterZahle} and references therein for
a survey of results of this nature.
For instance, if $\partial D$ is smooth, then
\[\msf M_0(t)=\msf{A}(D)-\tfrac{\sqrt 2\msf{L}(\partial D)}{\sqrt \pi}t^{1/2}+\tfrac{\pi\chi(D)}{2}t+O(t^{3/2})\qquad\text{as }t\to0\]
where $\chi(D)$ denotes the Euler characteristic;
see \cite{VanDenBergLeGall}.
In this context,
our main result also implies that one can recover $\msf{A}(D)$ and $\msf{L}(\partial D)$
almost surely from the small $t$ asymptotics of $\msf M_\ka(t)$.
Given that this is very similar to the result involving $\msf T_\ka(t)$ in Corollary \ref{Corollary: Spectral Geometry},
we omit this statement in the interest of brevity.
\end{remark}

\subsubsection{Variance Recovery}

Lastly, we confirm with the following result that $\ka>0$ can be recovered from $H_\ka$'s eigenvalues:

\begin{corollary}
\label{Corollary: Variance}
Suppose that the hypotheses in Theorem \ref{Theorem: Main}
hold.
Let $t_1>t_2>\cdots>0$ be a vanishing sequence of positive numbers
such that $t_n\leq\tilde c\mr e^{-cn^{1/2+\eps}}$ for some fixed $\tilde c,c,\eps>0$.
For every $\ka>0$,
\[\lim_{n\to\infty}\tfrac{4\pi^2}{\msf{A}(D)\log t_n}\Big(\msf T_\ka(t_n)-\msf T_0(t_n)\Big)=\ka^2\qquad\text{almost surely}.\]
\end{corollary}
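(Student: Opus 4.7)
The proof will be a direct consequence of Theorem \ref{Theorem: Main} combined with a standard Borel--Cantelli argument. The plan is to decompose
\[\msf T_\ka(t_n)-\msf T_0(t_n)=\bs\Big(\mbf E[\msf T_\ka(t_n)]-\msf T_0(t_n)\bs\Big)+\bs\Big(\msf T_\ka(t_n)-\mbf E[\msf T_\ka(t_n)]\bs\Big),\]
and show that when divided by $\log t_n$, the first (deterministic) term converges to $\tfrac{\ka^2\msf A(D)}{4\pi^2}$, while the second (centered random) term vanishes almost surely along the sequence $(t_n)$.

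For the deterministic part, note that $\msf T_0(t)$ is nonrandom, so \eqref{Equation: Main Asymptotic 1} directly gives
\[\mbf E[\msf T_\ka(t_n)]-\msf T_0(t_n)=\tfrac{\ka^2\msf A(D)}{4\pi^2}\log t_n+o(\log t_n)\qquad\text{as }n\to\infty,\]
so dividing by $\tfrac{\msf A(D)\log t_n}{4\pi^2}$ yields $\ka^2+o(1)$.

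For the random fluctuation part, set $Y_n=\msf T_\ka(t_n)-\mbf E[\msf T_\ka(t_n)]$. By the variance bound in \eqref{Equation: Main Asymptotic 1}, there exists $C>0$ (depending on $\ka$ but not on $n$) such that $\mbf{Var}(Y_n)=\mbf{Var}(\msf T_\ka(t_n))\leq C$ for all sufficiently small $t_n$. Fix $\de>0$; Chebyshev's inequality gives
\[\mbf P\bs(|Y_n|>\de|\log t_n|\bs)\leq \frac{C}{\de^2(\log t_n)^2}.\]
The hypothesis $t_n\leq\tilde c\mr e^{-cn^{1/2+\eps}}$ implies $|\log t_n|\geq cn^{1/2+\eps}-|\log\tilde c|$, so for $n$ large, $(\log t_n)^2\geq \tfrac{c^2}{2}n^{1+2\eps}$. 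Consequently
\[\sum_{n=1}^\infty\frac{1}{(\log t_n)^2}<\infty,\]
and Borel--Cantelli gives $Y_n/\log t_n\to 0$ almost surely. Adding the two pieces and multiplying by $\tfrac{4\pi^2}{\msf A(D)\log t_n}$ yields the claimed almost sure convergence to $\ka^2$.

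There is no genuine obstacle here: the entire subtlety was absorbed into Theorem \ref{Theorem: Main}, whose variance estimate $\mbf{Var}[\msf T_\ka(t)]=O(1)$ is precisely what makes the fluctuations negligible against the divergent logarithmic mean. The only quantitative point that must be checked carefully is that the assumed decay rate for $t_n$ is exactly tuned to the bound $O(1)$ on the variance and to the $(\log t_n)^{-2}$ Chebyshev rate: a summable $\sum (\log t_n)^{-2}$ requires $|\log t_n|$ to grow faster than $n^{1/2}$, which is why the quantifier $1/2+\eps$ appears in the hypothesis.
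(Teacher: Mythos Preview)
Your proof is correct and follows essentially the same approach as the paper: both decompose into the deterministic mean part (handled by the expectation asymptotic in \eqref{Equation: Main Asymptotic 1}) and the centered fluctuation (controlled via Chebyshev and Borel--Cantelli using the $O(1)$ variance bound), and both reduce the almost-sure convergence to the summability of $\sum_n(\log t_n)^{-2}$ under the hypothesis $t_n\leq\tilde c\,\mr e^{-cn^{1/2+\eps}}$. The only cosmetic difference is that the paper introduces an auxiliary exponent $\eta>0$ and checks $\sum_n(\log t_n)^{-2+\eta}<\infty$, whereas you work directly with a fixed threshold $\de$; your version is in fact slightly cleaner.
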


See Section \ref{Section: Proof of Spectral Geometry} for a proof.

\begin{remark}
Theorem \ref{Theorem: Main} implies that
$\ka$ can also be recovered from the $t\to0$ asymptotics of $\frac{2\pi}{\msf A(D)t\log t}\big(\msf M_\ka(t)-\msf M_0(t)\big)$;
we omit this statement for brevity.
\end{remark}

\section{Open Problems}
\label{Section: Open Problems}

Together with the pioneering works \cite{BailleulDangMouzard,MatsudaVanZuijlen,Mouzard}
outlined in Section \ref{Section: Past 2}, the
main result, applications, and method of proof in this paper naturally
lead to a large number of interesting open problems, which we hope will
lay the foundation for an active research program. In this section, we share
a sample of the problems that we find the most promising.

\subsection{Complete Expansion for $\ka>0$}
\label{Section: Open Problem - Complete Expansions}

It is known that if $\partial D$ is sufficiently regular,
then there exists an infinite sequence of constants $c_0,c_1,c_2,\ldots\in\mbb R$ such that
\begin{align}
\label{Equation: Complete Expansion for T0}
\msf T_0(t)=\sum_{n=0}^\infty c_n\,t^{-1+n/2}\qquad\text{as }t\to0;
\end{align}
see, e.g., \cite[Section 3.0]{Gilkey}. The constants $c_n$---the first
two of which were stated in Theorem
\ref{Theorem: Smooth Heat Trace}---are determined by
various features of $D$'s geometry.
For instance, $c_2=\frac{\chi(D)}6$, where $\chi(D)$ denotes $D$'s
Euler characteristic;
see, e.g., \cite[Page 468]{Smith} and references
therein for exact formulas for $c_3,\ldots,c_6$. 
In this context, our first open problem consists of understanding how the full expansion
\eqref{Equation: Complete Expansion for T0} is affected by the transition
from $\ka=0$ to $\ka>0$:

\begin{openproblem}
\label{Open Problem: Complete Expansions}
Let $\ka>0$.
Does there exist an asymptotic expansion
\begin{align}
\label{Equation: Complete Expansion for ka>0}
\msf T_\ka(t)-\msf T_0(t)=\sum_{n=0}^{\infty}\msf a_n(\ka,t)\qquad\text{as }t\to0,
\end{align}
where $\msf a_n(\ka,t)$ are possibly random coefficients of decreasing size
(e.g., for every $n\geq0$, one has $\msf a_{n+1}(\ka,t)=o\big(\msf a_{n}(\ka,t)\big)$ as $t\to0$)?
Can one characterize the joint distributions (over $n$ and $t$) of the processes $\msf a_n(\ka,t)$?
\end{openproblem}

Our main result only partially address this problem, as follows:
\begin{enumerate}[(1)]
\item $\msf a_0(\ka,t)=\tfrac{\ka^2\msf{A}(D)}{4\pi^2}\log t$ is deterministic.
\item The variance of $\sum_{n\geq1}\msf a_n(\ka,t)$ is of order $O(1)$ as $t\to0$.
\end{enumerate}
At best, the technique used in this paper could provide insights into the joint moments
of the $\msf a_n(\ka,t)$'s. That being said, the pathwise Feynman-Kac formula for the two-dimensional
PAM on a box proved by K\"onig, Perkowski and van Zuijlen in
\cite[Theorem 2.23]{KonigPerkowskiVanZuijlen} seems to suggest that computing
the first few terms in
\eqref{Equation: Complete Expansion for ka>0} exactly might still be possible using probabilistic arguments.

\subsection{What Can One "Hear" About the Shape of the AH's Domain?}

The geometric information contained in the constants $c_n$
in \eqref{Equation: Complete Expansion for T0} naturally leads to
the following question: {\it "Does the spectrum of $H_0$
characterize $D$?"} For several decades, this was considered
a very important open problem in analysis, which was widely
known as {\it "Can one hear the shape of a drum?"}
due to an influential paper by Kac \cite{Kac}. In the early 1990's,
the problem was settled in the negative by Gordon, Webb, and
Wolpert \cite{GordonWebbWolpert}, who showed that
there exists non-isometric planar domains that are isospectral
(i.e., domains on which $H_0$ has the same spectrum). One can then ask:

\begin{openproblem}
\label{Open Problem: Shape of a Drum}
Are there non-isometric planar domains
on which $H_\ka$'s eigenvalues have the same joint distribution when $\ka>0$?
\end{openproblem}

Corollaries \ref{Corollary: Spectral Geometry} and \ref{Corollary: Variance} imply that if $\partial D$
is sufficiently regular, then $H_\ka$'s eigenvalue point processes
(for different $D$'s and $\ka$'s) become mutually singular as soon as
one changes $D$'s area, $\partial D$'s length, or the strength of the noise $\ka$. This constrains the possible
constructions that would answer Open Problem \ref{Open Problem: Shape of a Drum}.

However, in addition to leading to exact analogues of classical spectral geometry problems,
injecting randomness in $H_0$ could potentially lead to entirely new phenomena.
For instance, while two domains $D$ are either isospectral or not
in the deterministic problem, the spectra of $H_\ka$ for $\ka>0$
over different domains could conceivably interpolate between these two extremes:

\begin{openproblem}
\label{Open Problem: Shape of a Drum 2}
Are there pairs of non-isometric domains $D_1$ and $D_2$ on which
$H_\ka$'s eigenvalue point processes are neither mutually singular nor equal in distribution?
If so, what are the possible relationships between the two laws (e.g., mutual absolute continuity, or something else)?
\end{openproblem}

Corollaries \ref{Corollary: Spectral Geometry} and \ref{Corollary: Variance} once again constrain the possible relationships
that such hypothetical domains might have.

\subsection{General Manifolds, Noises, and Boundary Conditions}
\label{Section: Open Problems - General}

Following-up on Remark \ref{Remark: Generality}, it is natural to ask the following:

\begin{openproblem}
\label{Open Problem: General}
Do suitable analogues of Theorem \ref{Theorem: Main} and Corollaries \ref{Corollary: Spectral Geometry},
\ref{Corollary: Fractal Dimension}, and \ref{Corollary: Variance} still hold if one or several of the following changes are made?
\begin{enumerate}[(1)]
\item $D$ is replaced by a compact Riemannian manifold of any dimension.
\item $\xi$ is replaced by a more general singular noise
(e.g., $\xi$ could be Gaussian with covariance $\mbf E[\xi(x)\xi(y)]=\mf K(x,y)$
for the Riesz kernel $\mf K(x,y)=|x-y|^{-\om}$, $\om>0$, or the fractional
kernel $\mf K(x,y)=\prod_{i}|x_i-y_i|^{-\om_i}$, $\om_i>0$).
\item Assuming there is a boundary, the Dirichlet boundary condition is replaced by
another boundary condition (e.g., Neumann, Robin, or mixed).
\end{enumerate}
\end{openproblem}

Among other things, a positive resolution of this problem would
improve every known Weyl law for the AHs (i.e., the results of \cite{BailleulDangMouzard,MatsudaVanZuijlen,Matsuda}
outlined in Section \ref{Section: Past 2}). 
With this out of the way, two important remarks are in order:

\begin{remark}
The currently-known techniques used to construct the AH and PAM
only work for a certain class of singular operators and PDEs, which are known as
subcritical (e.g., \cite[Page 272]{Hairer}). Among other things, this puts constraints
on the possible combinations of noises and manifolds for which $H_\ka$ and $u_\ka$ can be
defined. (For instance, the AH and PAM with white noise are not expected to
make sense on manifolds whose dimension is greater or equal to $4$.)
Modulo these kinds of limitations,
we expect that the short answer to Open Problem \ref{Open Problem: General} is essentially positive.
\end{remark}

\begin{remark}
\label{Remark: When is Our Method Applicable}
That being said, we do not expect that the method of proof
outlined in Section \ref{Section: Outline of Proof} can be used to settle Open Problem \ref{Open Problem: General} in every
case for which the AH and PAM can be defined. This is because, if the noise is too irregular,
then the moments of $\mr{Tr}\big[\mr e^{-tH_\ka}\big]$ and $u_\ka(t,x)$ are infinite for all $t>0$,
thus rendering a statement analogous to Theorem \ref{Theorem: Main} impossible.
(For instance, it is known that the moments of the PAM with white noise are always infinite in three dimensions;
e.g., \cite[Paragraph following Theorem 2]{Labbe}
and \cite[Remark 1.7]{GuXu}.) That said, we expect that our method could be used whenever
the moments of $\mr{Tr}\big[\mr e^{-tH_\ka}\big]$ and $u_\ka(t,x)$ are finite for all $t>0$,
or at least for all
$t\in(0,\th_\ka)$ smaller than some threshold $\th_\ka>0$.
\end{remark}

In this context, we point out that
the main obstacle to using the proof method
outlined in Section \ref{Section: Outline of Proof}
with more general manifolds is as follows: To the best of our knowledge,
intersection local times have so far only been constructed for Brownian motions/bridges
in $\mbb R^n$. In particular,
following-up on Remark \ref{Remark: Generality}, this explains why we have restricted our
attention to the AH and PAM with white noise and Dirichlet boundary condition on flat domains in this paper. That being said,
the extent of the improvements on the Weyl law in \eqref{Equation: Mouzard Law Intro}
achieved in this paper motivates studying intersection
local times in curved spaces. For instance:

\begin{openproblem}
\label{Open Problem: General SILT and MILT}
Let $M$ be a compact Riemannian surface, with or without boundary.
Construct, and then study the small $t$ asymptotics of the intersection local times
of Brownian motions and bridges (with reflection if there is a boundary) on $M$. That is, replicate all the results in Section
\ref{Section: SILT and MILT} on general surfaces.
\end{openproblem}

Once Open Problem \ref{Open Problem: General SILT and MILT} is solved,
adapting our proof of Theorem \ref{Theorem: Main} to this more general setting
should be relatively straightforward.

\subsection{Transition of Asymptotics as $\eps\to0$}
\label{Section: Open Problem Transition}

One of the most striking features of Corollary \ref{Corollary: Variance}
is that, at first glance, the eigenvalues of $H_\ka$ appear to contain more information
than the eigenvalues of $H_{\ka,\eps}$ for any fixed $\eps>0$.
More specifically, suppose that $\msf T_0(t)$
satisfies \eqref{Equation: Complete Expansion for T0}, and let $\eps>0$ be fixed.
Recall the definition of $\msf T_{\ka,\eps}(t)$ in \eqref{Equation: Intro Regularized Moments}.
As $\xi_\eps$ is smooth, a straightforward calculation using the classical Feynman-Kac formula
(e.g., \eqref{Equation: F-K}) yields
\begin{multline}
\label{Equation: Smooth Potential Trace Asymptotics}
\msf T_{\ka,\eps}(t)=\msf T_0(t)-\frac{\ka}{2\pi}\int_D\xi_\eps(x)\d x+o(1)\\
=\tfrac{\msf{A}(D)}{2\pi}t^{-1}
-\tfrac{\msf{L}(\partial D)}{4\sqrt{2\pi}}t^{-1/2}+\left(\tfrac{\chi(D)}6-\frac{\ka}{2\pi}\int_D\xi_\eps(x)\d x\right)+o(1)\qquad\text{as }t\to0.
\end{multline}
This implies that $\msf A(D)$ and $\msf L(\partial D)$ can
be recovered from $H_{\ka,\eps}$'s eigenvalues in a way that is analogous to Corollary \ref{Corollary: Spectral Geometry}-(1) and -(2):
\[\lim_{t\to0}2\pi t\,\msf T_{\ka,\eps}(t)=\msf{A}(D)
\qquad\text{and}\qquad
\lim_{t\to0}4\sqrt{2\pi t}\left(\tfrac{\msf{A}(D)}{2\pi}t^{-1}-\msf T_{\ka,\eps}(t)\right)=\msf{L}(\partial D).\]
However, there does not appear to be an analogue of Corollary \ref{Corollary: Variance} for $\eps>0$.
Indeed, the dependence on $\ka$ in \eqref{Equation: Smooth Potential Trace Asymptotics}
is contained in the constant order term, which is Gaussian with mean $\tfrac{\chi(D)}6$ and variance
$\frac{\ka^2}{4\pi^2}\iint_{D^2}p_\eps(x-y)\dd x\dd y.$

Given this sharp contrast between the behavior of $\msf T_\ka(t)$ and $\msf T_{\ka,\eps}(t)$
as $t\to0$, it would be interesting to better understand how these asymptotics gradually
change nature as $\eps\to0$. For this purpose, we propose the following:

\begin{openproblem}
Let $\msf e:(0,1)\to(0,1)$ be such that $\msf e(t)\to0$ as $t\to0$.
Calculate the small $t$ asymptotics of $\msf T_{\ka,\msf e(t)}(t)$ as $t\to0$.
\end{openproblem}

We expect that there exists some critical rate of decay $\mf d(t)$ such that
\begin{enumerate}[(1)]
\item if $\msf e(t)\gg\mf d(t)$, then $\msf T_{\ka,\msf e(t)}(t)$'s asymptotics are similar to
that of $\msf T_{\ka,\eps}(t)$ for fixed $\eps$;
\item if $\msf e(t)\ll\mf d(t)$, then $\msf T_{\ka,\msf e(t)}(t)$'s asymptotics are similar to
that of $\msf T_\ka(t)$.
\end{enumerate}
Identifying the precise nature of this hypothetical "phase transition" would presumably shed light on the
transition between the Gaussian constant order term in \eqref{Equation: Smooth Potential Trace Asymptotics}
and the emergence of the deterministic term $\tfrac{\ka^2\msf{A}(D)}{4\pi^2}\log t$ in \eqref{Equation: Main Asymptotic 1}.

\section{Theorem \ref{Theorem: Main} Step 1: Construction of $H_\ka$ and $u_\ka$}
\label{Section: Construction of H and u}

As hinted at in \eqref{Equation: Smooth AH Limits},
we assume in this paper that $H_\ka$ is constructed as the renormalized limit of the smooth approximations $H_{\ka,\eps}$.
Given that the Feynman-Kac formula for $H_{\ka,\eps}$ is a fundamental aspect of our approach,
we take this opportunity to define $H_{\ka,\eps}$ via its semigroup:

\begin{definition}
\label{Definition: Brownian}
We use $B$ to denote a standard Brownian motion on $\mbb R^2$. For any $x,y\in\mbb R^2$ and $t>0$,
we let $B^x=(B|B(0)=x)$ denote the Brownian motion started at $x$, and let $B^{x,y}_t=(B|B(0)=x,~B(t)=y)$
denote the Brownian bridge from $x$ to $y$ on the time interval $[0,t]$. Finally, when we require multiple
independent copies of Brownian motion, we use the notation ${_1}B,{_2}B,{_3}B,\ldots$
(in particular, we use ${_i}B^{x_i}$ and ${_i}B^{x_i,y_i}_t$, $i\geq1$, to denote independent Brownian motions
started at $x_i$ and independent Brownian bridges from $x_i$ to $y_i$ on the time interval $[0,t]$).
\end{definition}

\begin{definition}
\label{Definition: Hitting Time}
For any stochastic process $Z$ that takes values in $\mbb R^2$,
we denote the hitting time of $D$'s complement as
\[\tau_D(Z)=\inf\big\{t>0:Z(t)\not\in D\big\},\]
with the convention that $\tau_D(Z)=\infty$ if $Z$ never exits $D$.
\end{definition}

\begin{definition}
Let $\xi:L^2(\mbb R^2)\to\mbb R$ be the Gaussian process with mean zero and
covariance \eqref{Equation: Inner Product Covariance}. Viewing $\xi$ as
a random Schwartz distribution, for every $\eps>0$, we define $\xi_\eps:\mbb R^2\to\mbb R$
as
\[\xi_\eps(x)=(\xi*p_{\eps/2})(x)=\xi\big(p_{\eps/2}(x-\cdot)\big),\qquad x\in\mbb R^2,\]
where $*$ denotes the convolution.
\end{definition}

\begin{remark}
\label{Remark: General xi eps Covariance}
For every $\eps>0$,
$\xi_\eps$ is a continuous function since $p_{\eps/2}$ is smooth.
Moreover, $\xi_\eps$ is a Gaussian process with mean zero and covariance
\eqref{Equation: xi epsilon covariance}, as can be shown with the following more general calculation: For any $\eps,\tilde\eps>0$
and $x,y\in\mbb R^2$,
\begin{multline}
\label{Equation: General xi eps Covariance}
\mbf E\big[\xi_\eps(x)\xi_{\tilde\eps}(y)\big]
=\mbf E\big[\xi\big(p_{\eps/2}(x-\cdot)\big)\xi\big(p_{\tilde\eps/2}(y-\cdot)\big)\big]\\
=\int_{\mbb R^2}p_{\eps/2}(x-z)p_{\tilde\eps/2}(y-z)\d z
=p_{(\eps+\tilde\eps)/2}(x-y).
\end{multline}
\end{remark}

\begin{definition}
For any $\ka,\eps,t>0$, we let $\mc K_{\ka,\eps}(t)$ denote the integral operator on $L^2(D)$ with kernel
\begin{align}
\label{Equation: F-K}
\mc K_{\ka,\eps}(t;x,y)=p_t(x-y)\mbf E_B\left[\mbf 1_{\{\tau_D(B^{x,y}_t)>t\}}\exp\left(-\ka\int_0^t\xi_\eps\big(B^{x,y}_t(r)\big)\d r\right)\right],\qquad x,y\in D,
\end{align}
where we assume that $B^{x,y}_t$ and $\xi_\eps$ are independent, and we use $\mbf E_B$ to
denote the (conditional) expectation with respect to the randomness in $B$ only.
\end{definition}

We record the following elementary property of the $\mc K_{\ka,\eps}(t)$'s:

\begin{proposition}
\label{Proposition: Smoothed Strong-Continuity}
For every $\ka,\eps>0$,
the family $\{\mc K_{\ka,\eps}(t):t>0\}$ is a strongly continuous semigroup
of symmetric compact operators on $L^2(D)$.
\end{proposition}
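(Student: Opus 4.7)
The plan is to verify the four claims in turn, exploiting the fact that for any fixed realization of $\xi_\eps$, the function $\xi_\eps$ is smooth by construction and hence bounded on the compact set $\bar D$. Setting $M=\|\xi_\eps\|_{L^\infty(\bar D)}<\infty$, the kernel satisfies the uniform bound $|\mc K_{\ka,\eps}(t;x,y)|\leq p_t(x-y)\,\mr e^{\ka tM}$, which will control everything below.

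First, symmetry follows from the time-reversal property of the Brownian bridge, namely $\{B^{x,y}_t(r)\}_{r\in[0,t]}\deq\{B^{y,x}_t(t-r)\}_{r\in[0,t]}$: both the exit event $\{\tau_D>t\}$ and the integral $\int_0^t\xi_\eps(B^{x,y}_t(r))\d r$ depend only on the unordered trajectory and are therefore invariant under this time reversal, whence $\mc K_{\ka,\eps}(t;x,y)=\mc K_{\ka,\eps}(t;y,x)$. Compactness then follows because Hilbert--Schmidt operators on $L^2(D)$ are compact, and the kernel bound yields
\[\iint_{D\times D}|\mc K_{\ka,\eps}(t;x,y)|^2\d x\d y\leq\mr e^{2\ka tM}\int_D\int_{\mbb R^2}p_t(x-y)^2\d y\d x=\tfrac{\msf A(D)}{4\pi t}\,\mr e^{2\ka tM}<\infty.\]

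The step requiring the most care is the semigroup identity $\mc K_{\ka,\eps}(t+s)=\mc K_{\ka,\eps}(t)\mc K_{\ka,\eps}(s)$. I would first recast the action of $\mc K_{\ka,\eps}(t)$ via the free-motion identity
\[(\mc K_{\ka,\eps}(t)f)(x)=\mbf E\left[f\big(B^x(t)\big)\,\mbf 1_{\{\tau_D(B^x)>t\}}\exp\left(-\ka\int_0^t\xi_\eps\big(B^x(r)\big)\d r\right)\right],\]
obtained by conditioning on $B^x(t)=y$ and using that the density of $B^x(t)$ at $y$ is $p_t(x-y)$. Splitting $\int_0^{t+s}=\int_0^t+\int_t^{t+s}$, factoring the exit indicator as $\mbf 1_{\{\tau_D(B^x)>t\}}\cdot\mbf 1_{\{\tau_D\text{ of the shifted path}>s\}}$, and applying the Markov property of $B^x$ at time $t$ yields the semigroup identity after a standard Chapman--Kolmogorov rearrangement.

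Finally, the kernel bound gives the uniform operator estimate $\|\mc K_{\ka,\eps}(t)\|_{L^2(D)\to L^2(D)}\leq\mr e^{\ka tM}$ (since convolution with $p_t$ is an $L^2$ contraction), so by the semigroup property it suffices to establish strong continuity at $t=0$. For $f\in C_c(D)$, continuity of both $f$ and $\xi_\eps$, together with $\mbf P(\tau_D(B^x)>t)\to 1$ as $t\to 0$ for every $x$ in an open set compactly contained in $D$, yield pointwise convergence $(\mc K_{\ka,\eps}(t)f)(x)\to f(x)$ a.e.; the majorant $\mr e^{\ka M}(p_t*|f|)(x)$ upgrades this to $L^2$ convergence by dominated convergence. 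Density of $C_c(D)$ in $L^2(D)$, combined with the uniform operator bound, then extends strong convergence to all of $L^2(D)$. The only conceptually subtle point is the semigroup identity---the kernel is written in terms of Brownian bridges rather than free motion---but the free-motion recasting above sidesteps this cleanly, and everything else is routine given the boundedness and continuity of $\xi_\eps$ on $\bar D$.
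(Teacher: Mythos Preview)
Your proof is correct, but it takes a different route from the paper's. The paper's argument is a one-line reduction: it invokes a general result from Sznitman's monograph (Chapter~1, (3.14)--(3.15)) stating that the Feynman--Kac semigroup with Dirichlet killing is a strongly continuous semigroup of symmetric compact operators whenever the potential lies in the Kato class, and then checks the Kato condition $\lim_{t\to0}\sup_{x\in D}\mbf E\big[\int_0^t|\ka\xi_\eps(B^x(r))|\d r\big]=0$ in one line using boundedness of $\xi_\eps$ on $\bar D$. You instead verify each of the four properties directly: symmetry via bridge time-reversal, compactness via a Hilbert--Schmidt bound, the semigroup identity via the Markov property after recasting the kernel in free-motion form, and strong continuity via dominated convergence on a dense subspace plus uniform boundedness. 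Both approaches ultimately rest on the same fact---that $\xi_\eps$ is continuous hence bounded on $\bar D$---but the paper's is far shorter because it outsources all the work to a standard reference, while yours is self-contained and makes the mechanism transparent. A minor polish: your dominated-convergence step for strong continuity cites the $t$-dependent majorant $\mr e^{\ka M}(p_t*|f|)$; it would be cleaner to note that for $f\in C_c(D)$ this is bounded by the $t$-independent function $\mr e^{\ka M}\|f\|_\infty\mbf 1_D\in L^2(D)$.
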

\begin{proof}
By \cite[Chapter 1, (3.14) and (3.15)]{Sznitman}, it suffices to show that the function
$V=\ka\xi_\eps$ is in the Kato class; see \cite[Chapter 1, (2.4)]{Sznitman} for a definition.
Since $\xi_\eps$ is continuous and $D$ is bounded, $\xi_\eps$ is bounded. Thus,
\[\lim_{t\to0}\sup_{x\in D}\mbf E_B\left[\int_0^t\big|\ka\xi_\eps\big(B^x(r)\big)\big|\d r\right]\leq \lim_{t\to0}t\ka\|\xi_\eps\|_{L^\infty(D)}=0,\]
which concludes the proof by definition of the Kato class.
\end{proof}

We can now define $H_{\ka,\eps}$ and $u_{\ka,\eps}$ as follows:

\begin{definition}
For every $\eps,\ka>0$,
we define $H_{\ka,\eps}$ as the infinitesimal generator of $\{\mc K_{\ka,\eps}(t):t>0\}$ on $L^2(D)$
(e.g., \cite[Chapter II, Definition 1.2]{EngelNagel}). Then, we define
\[u_{\ka,\eps}(t,x)=\mr e^{-tH_{\ka,\eps}}\mbf 1_D(x)=\mbf E\left[\mbf 1_{\{\tau_D(B^x)>t\}}\exp\left(-\int_0^t\xi_\eps\big(B^x(r)\big)\d r\right)\right],\qquad t>0,~x\in D.\]
\end{definition}

The following properties of $H_{\ka,\eps}$ are elementary:

\begin{proposition}
For every $\eps,\ka>0$,
the operator $H_{\ka,\eps}$ is self-adjoint on $L^2(D)$.
Its spectrum $\la_1(H_{\ka,\eps})\leq\la_2(H_{\ka,\eps})\leq\cdots$ is purely discrete, bounded below, and has no accumulation point.
Moreover, the corresponding eigenfunctions $\psi_1(H_{\ka,\eps}),\psi_2(H_{\ka,\eps}),\ldots$
form an orthonormal basis of $L^2(D)$. Finally, for any $f\in L^2(D)$,
\begin{align}
\label{Equation: H ka eps Spectral Expansion}
\mr e^{-t H_{\ka,\eps}}f=\sum_{n=1}^\infty\mr e^{-t\la_n(H_{\ka,\eps})}\big\langle\psi_n(H_{\ka,\eps}),f\big\rangle\psi_n(H_{\ka,\eps}).
\end{align}
\end{proposition}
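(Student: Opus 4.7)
The plan is to derive all four assertions directly from Proposition \ref{Proposition: Smoothed Strong-Continuity}, which provides that $\{\mc K_{\ka,\eps}(t):t>0\}$ is a strongly continuous semigroup of symmetric compact operators on $L^2(D)$. Since the semigroup property implies that all operators in the family commute, the spectral theorem for commuting compact self-adjoint operators yields a single orthonormal basis $\{\psi_n\}_{n\geq 1}$ of $L^2(D)$ that simultaneously diagonalizes every $\mc K_{\ka,\eps}(t)$: for each $n$ there exists a function $\mu_n:(0,\infty)\to\mbb R$ with $\mc K_{\ka,\eps}(t)\psi_n=\mu_n(t)\psi_n$.

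First I would show that $\mu_n(t)=\mr e^{-t\la_n}$ for a unique $\la_n\in\mbb R$. The multiplicative identity $\mu_n(t+s)=\mu_n(t)\mu_n(s)$ (from the semigroup law), combined with the fact that $\mu_n(t)\to 1$ as $t\to 0^+$ (from strong continuity, evaluated on the unit vector $\psi_n$), forces $\mu_n$ to be a continuous multiplicative function on $(0,\infty)$; strict positivity follows from the identity $\mu_n(t)=\mu_n(t/k)^k$ on letting $k\to\infty$. Defining $\la_n:=-\log\mu_n(1)$ then yields $\mu_n(t)=\mr e^{-t\la_n}$, and by definition of the infinitesimal generator, $H_{\ka,\eps}\psi_n=\la_n\psi_n$. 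Hence $\{\psi_n\}$ is an orthonormal basis of eigenfunctions of $H_{\ka,\eps}$.

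Next I would verify that $\{\la_n\}$ is bounded below and diverges to infinity. The operator-norm bound $\|\mc K_{\ka,\eps}(t)\|_{\mr{op}}\leq\mr e^{\ka t\|\xi_\eps\|_{L^\infty(\bar D)}}$ can be read off the Feynman-Kac representation \eqref{Equation: F-K} using that $\xi_\eps$ is (almost surely) continuous on $\mbb R^2$ and hence bounded on the compact set $\bar D$; this gives $\la_n\geq-\ka\|\xi_\eps\|_{L^\infty(\bar D)}$. Compactness of $\mc K_{\ka,\eps}(t)$ forces $\mu_n(t)\to 0$ and hence $\la_n\to\infty$ as $n\to\infty$, so $\{\la_n\}$ is a discrete subset of $\mbb R$ with no finite accumulation point. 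Self-adjointness of $H_{\ka,\eps}$ is now immediate from its diagonalization by an ONB with real eigenvalues. The expansion \eqref{Equation: H ka eps Spectral Expansion} follows by writing $f=\sum_n\langle\psi_n,f\rangle\psi_n$ and applying $\mc K_{\ka,\eps}(t)=\mr e^{-tH_{\ka,\eps}}$ term by term, the interchange being justified by continuity of the operator.

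Given the strong input provided by Proposition \ref{Proposition: Smoothed Strong-Continuity}, there is no substantive obstacle to this argument: the proposition is essentially a textbook application of spectral theory for compact self-adjoint semigroups on Hilbert space. The only step requiring any care is the verification that each $\mu_n(t)$ is strictly positive (so that $\la_n=-\log\mu_n(1)$ is real), which is handled by the brief $k$-th root argument above.
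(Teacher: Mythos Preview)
Your argument is correct and self-contained, but it differs from the paper's proof. The paper does not reconstruct the spectral theory from the semigroup; instead it simply cites Sznitman's monograph: self-adjointness from \cite[Chapter 1, Remark 4.14]{Sznitman}, the discrete bounded-below spectrum and completeness of eigenfunctions from \cite[Chapter 3, (1.47)]{Sznitman} (noting that the result there is stated for nonnegative potentials but extends trivially to potentials bounded below by shifting), and then deduces \eqref{Equation: H ka eps Spectral Expansion} from the uniqueness of the semigroup generated by a given operator \cite[Chapter II, Theorem 1.4]{EngelNagel}.

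What your approach buys is that it is essentially reference-free once Proposition \ref{Proposition: Smoothed Strong-Continuity} is in hand: you never appeal to the theory of Schr\"odinger operators with Kato-class potentials, only to abstract spectral theory of compact self-adjoint semigroups. The paper's approach is shorter on the page but outsources the work to Sznitman. One small point worth tightening in your write-up: the phrase ``spectral theorem for commuting compact self-adjoint operators'' hides the step that each $\mc K_{\ka,\eps}(t)$ is injective (so that the kernel, which could a priori be infinite-dimensional, is trivial and the nonzero eigenspaces of $\mc K_{\ka,\eps}(1)$ are finite-dimensional and exhaust $L^2(D)$). Your $k$-th root argument for strict positivity of $\mu_n$ is exactly the ingredient needed, but it should logically precede the simultaneous diagonalization rather than follow it: first show $\mc K_{\ka,\eps}(t)f=0\Rightarrow \mc K_{\ka,\eps}(t/2^k)f=0\Rightarrow f=0$ by symmetry and strong continuity, then diagonalize $\mc K_{\ka,\eps}(1)$ and refine within each finite-dimensional eigenspace.
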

\begin{proof}
See \cite[Chapter 1, Remark 4.14]{Sznitman} for the details on $H_{\ka,\eps}$ being self-adjoint.
See \cite[Chapter 3, (1.47)]{Sznitman} for the claims regarding $H_{\ka,\eps}$'s eigenvalues
and eigenfunctions (as stated, \cite[Chapter 3, (1.47)]{Sznitman} applies to Schr\"odinger operators
with nonnegative potentials, but this can be trivially extended to potentials that are bounded
below---such as $\ka\xi_\eps$---by adding a constant to the potential).
Once the properties of $H_{\ka,\eps}$'s spectrum are established, \eqref{Equation: H ka eps Spectral Expansion}
follows from the facts that the right-hand side of \eqref{Equation: H ka eps Spectral Expansion} is
clearly a semigroup that generates $H_{\ka,\eps}$, and that semigroups are uniquely determined by
their generators (e.g., \cite[Chapter II, Theorem 1.4]{EngelNagel}).
\end{proof}

With this in hand, we may now state our assumption regarding the existence of $H_\ka$ as a renormalized limit
of $H_{\ka,\eps}$, and then define $u_{\ka}$ using its semigroup:

\begin{assumption}
\label{Assumption: AH}
There exists a random self-adjoint operator $H_\ka$ on $L^2(D)$
that satisfies the following conditions:
\begin{enumerate}[(1)]
\item $H_\ka$'s spectrum $\la_1(H_\ka)\leq\la_2(H_\ka)\leq\cdots$ is purely discrete,
bounded below, and without accumulation point. The corresponding eigenfunctions $\psi_1(H_\ka),\psi_2(H_\ka),\ldots$
form an orthonormal basis of $L^2(D)$.
\item Define the renormalization constant
\begin{align}
\label{Equation: Renormalization}
\msf c_{\ka,\eps}=\tfrac{\ka^2}{2\pi}\log(1/\eps),\qquad\eps>0.
\end{align}
For every $n\geq1$,
\begin{equation}
\label{Equation: Assumption Convergence of Eigenvalues}
\lim_{\eps\to0}\big(\la_n(H_{\ka,\eps})+\msf c_{\ka,\eps}\big)=\la_n(H_\ka)\qquad\text{in probability.}
\end{equation}
\item It is possible to choose $\{\psi_n(H_{\ka,\eps}):n\geq1\}$ and $\{\psi_n(H_\ka):n\geq1\}$
in such a way that for every $n\geq1$,
\begin{equation}
\label{Equation: Assumption Convergence of Eigenfunctions}
\lim_{\eps\to0}\|\psi_n(H_{\ka,\eps})-\psi_n(H_\ka)\|^2=0\qquad\text{in probability}.
\end{equation}
\end{enumerate}
Finally, for every $\ka,t>0$, we define $u_\ka(t,\cdot):D\to\mbb R$ as the function in \eqref{Equation: Spectral Expansion}.
\end{assumption}

We end this section with three remarks, the latter two of which
comment on previous works that constructed $H_\ka$
in a way that is consistent with Assumption \ref{Assumption: AH}.

\begin{remark}
\label{Remark: Choosing the Eigenfunctions}
The caveat that we must "choose" the eigenfunctions in such a way that \eqref{Equation: Assumption Convergence of Eigenfunctions} holds is caused by the following subtlety: If we only require that the eigenfunctions of a self-adjoint operator form
an orthonormal basis of $L^2(D)$, then there is some arbitrariness in how exactly we choose the eigenfunctions as basis elements of their respective eigenspace. As pointed out in \cite[Remark 1.1]{Labbe}, we can reformulate \eqref{Equation: Assumption Convergence of Eigenfunctions} without making references to eigenfunction choice
as follows: Suppose that $\psi_n(H_\ka),\ldots,\psi_{n+m}(H_\ka)$ is an orthonormal basis of the eigenspace of some eigenvalue $\la_{n}(H_{\ka})$
(with $m=0$ if $\la_{n}(H_{\ka})$ has multiplicity one). Then, the unit ball of the subspace spanned by $\psi_n(H_{\ka,\eps}),\ldots,\psi_{n+m}(H_{\ka,\eps})$
converges in probability to the unit ball of the subspace spanned by $\psi_n(H_\ka),\ldots,\psi_{n+m}(H_\ka)$ with respect to the Hausdorff metric induced by the
$L^2$-norm on $L^2(D)$.
\end{remark}

\begin{remark}
\label{Remark: Construction of Hk Previous Works}
As shown in \cite[Lemma 4.8]{Labbe}, the convergence
$H_{\ka,\eps}\to H_\ka$ in the sense of \eqref{Equation: Assumption Convergence of Eigenvalues}
and \eqref{Equation: Assumption Convergence of Eigenfunctions}/Remark \ref{Remark: Choosing the Eigenfunctions} can be reduced to
the convergence in norm of the resolvent operators $(H_{\ka,\eps}-z)^{-1}\to(H_\ka-z)^{-1}$
for some $z\in\mbb C\setminus\mbb R$, together with some uniform compactness estimate.
This was proved on the square $D=(0,L)^2$ for some $L>0$ in \cite[Proof of Theorem 1, Page 3225]{Labbe} (see \cite{ChoukVanZuijlen} for a similar construction), on domains with smooth boundary in \cite[Proposition 2.14]{Mouzard},
and on arbitrary domains in \cite[Theorem 5.3]{MatsudaVanZuijlen}.
\end{remark}

\begin{remark}
\label{Remark: Renormalization 1}
One could replace our choice of $\msf c_{\ka,\eps}$ in \eqref{Equation: Renormalization} by
\begin{align}
\label{Equation: Alternate Renormalization}
\msf c_{\ka,\eps}=\tfrac{\ka^2}{2\pi}\log(1/\eps)+c+o(1),\qquad\eps\to0
\end{align}
for any arbitrary constant $c\in\mbb R$ and vanishing correction $o(1)$
without impacting the conclusion of Assumption \ref{Assumption: AH}. This partly explains why 
the renormalization constant for the two-dimensional AH and PAM is often written as
$\frac{\ka^2}{2\pi}\log(1/\eps)+O(1)$ (e.g., \cite[(1.2)]{HairerLabbe} or \cite[Section 4.1]{Labbe}).
If we were to choose \eqref{Equation: Alternate Renormalization} instead, then our main result would remain unchanged.
That being said, not all of the works cited in Remark \ref{Remark: Construction of Hk Previous Works} identify that the renormalization must be of the form \eqref{Equation: Alternate Renormalization}. However, as explained in Remark \ref{Remark: Renormalization 2},
it is crucial for our method of proof that this specific renormalization be used.
\end{remark}

\section{Intersection Local Times}
\label{Section: SILT and MILT}

In this section, we collect several results on the intersection local times of planar Brownian motions
and bridges, which are essential in Steps 2 and 3 of the outline of proof presented in
Section \ref{Section: Outline of Proof}. The main references for the known results in this section
are the monograph \cite{ChenBook}, which concerns Brownian motions, and Section 3 of the recent paper \cite{Matsuda},
which concerns Brownian bridges.

\subsection{Outline of Results}

\begin{definition}
\label{Definition: Approximate SILT and MILT}
Recall \eqref{Equation: Gaussian Kernel}.
Let $Z$ and $Z_1,Z_2$ be stochastic processes taking values in $\mbb R^2$.
For every $\eps>0$ and bounded Borel measurable set $A\subset[0,\infty)^2$, define
the approximate self-intersection local time (SILT) of $Z$ on $A$ as
\[\be^\eps_A(Z)=\int_Ap_\eps\big(Z(r_1)-Z(r_2)\big)\d r,\]
and the approximate mutual intersection local time (MILT) of $Z_1$ and $Z_2$ on $A$ as
\[\al^\eps_A(Z_1,Z_2)=\int_{A}p_\eps\big(Z_1(r_1)-Z_2(r_2)\big)\d r.\]
\end{definition}

\begin{remark}
\label{Remark: Alternate MILT}
Given that $p_\eps=p_{\eps/2}*p_{\eps/2}$ (see \eqref{Equation: General xi eps Covariance}), we can equivalently write
\[\al^\eps_A(Z_1,Z_2)=\int_{\mbb R}\int_{A}p_{\eps/2}\big(Z_1(r_1)-y\big)p_{\eps/2}\big(Z_2(r_2)-y\big)\d r\dd y.\]
In particular, when $A=A_1\times A_2$, this reduces to
\begin{align}
\label{Equation: Alternate MILT}
\al^\eps_A(Z_1,Z_2)=\int_{\mbb R^2}\prod_{i=1}^2\int_{A_i}p_{\eps/2}\big(Z_i(r)-y\big)\dd r\d y.
\end{align}
\end{remark}

Since $p_\eps\to\de_0$ as $\eps\to0$, the SILT and MILT approximate functionals that
 count how often the path of a stochastic process $Z$ intersects itself,
or how often the paths of two stochastic processes $Z_1$ and $Z_2$ intersect each other.
We now discuss the aspects of the theories of SILTs and MILTs that we need in this paper:

\subsubsection{Self-Intersection}

The construction and basic properties of the SILT that we need in
this paper are summarized in the following
results, most of which were proved in \cite{ChenBook,GuXu,Matsuda}
(see each statement for precise references):

\begin{notation}
Given any set $A\subset\mbb R^2$, let $A_{\leq}=\big\{(x,y)\in A:x\leq y\big\}.$
\end{notation}

\begin{theorem}[{\cite[Theorems 2.3.2 and 2.4.1]{ChenBook} and \cite[Theorem 3.7]{Matsuda}}]
\label{Theorem: Existence of SILT}
Let $x\in\mbb R^2$ and $t>0$ be fixed,
and suppose that $Z$ is either $B^x$ or $B^{x,x}_t$.
\begin{enumerate}[(1)]
\item Let $0\leq a<b\leq c<d\leq t$.
There exists a random variable $\be_{[a,b]\times[c,d]}(Z)$ such that
\[\be_{[a,b]\times[c,d]}(Z)=\lim_{\eps\to0}\be^\eps_{[a,b]\times[c,d]}(Z)
\qquad\text{in probability.}\]
\item If $0\leq a<b\leq t$, then
there exists a random variable $\ga_{[a,b]^2_\leq}(Z)$ such that
\[\ga_{[a,b]^2_\leq}(Z)=\lim_{\eps\to0}\Big(\be^\eps_{[a,b]^2_\leq}(Z)-\mbf E\Big[\be^\eps_{[a,b]^2_\leq}(Z)\Big]\Big)
\qquad\text{in probability.}\]
\end{enumerate}
\end{theorem}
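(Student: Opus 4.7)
The plan is to prove both parts through direct $L^2$-computations using the Fourier representation of the Gaussian kernel, and then transfer the result from Brownian motion to Brownian bridge by a standard absolute continuity argument. The starting observation is that
\[p_\eps(y)=(2\pi)^{-2}\int_{\mbb R^2}\mr e^{\mathrm i\xi\cdot y}\mr e^{-\eps|\xi|^2/2}\d\xi,\]
so that for $Z=B^x$, every approximate SILT $\be^\eps_A(Z)$ reduces via Fubini to an iterated Gaussian integral whose characteristic function is explicit. In particular, $\mbf E[\be^\eps_A(Z)]=(2\pi)^{-1}\iint_A(|r_1-r_2|+\eps)^{-1}\d r$ for Brownian motion, and all second moments of $\be^\eps_A(Z)$ can be written down as explicit four-fold integrals.

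For part (1), I would exploit the fact that on $[a,b]\times[c,d]$ with $b\leq c$, the time separation is bounded below by $c-b>0$. Hence the Gaussian densities $p_\eps(Z(r_1)-Z(r_2))$ remain bounded in $L^p$ uniformly in $\eps$, and a direct computation of $\mbf E\bigl[(\be^\eps_A(Z)-\be^{\eps'}_A(Z))^2\bigr]$ yields a Cauchy sequence in $L^2$ as $\eps,\eps'\to0$ (the three contributions $\mbf E[(\be^\eps)^2]$, $\mbf E[(\be^{\eps'})^2]$, and $\mbf E[\be^\eps\be^{\eps'}]$ all converge to a common limit, namely $(2\pi)^{-2}\iint_{A^2}\prod_{i}(|r^{(i)}_1-r^{(i)}_2|)^{-1}\d r$ corrected by Wick pairings). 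Convergence in $L^2$ then upgrades to convergence in probability.

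For part (2), the critical ingredient is a careful four-point calculation around the diagonal $r_1=r_2$, which in $d=2$ is responsible for the logarithmic divergence of $\mbf E[\be^\eps_{[a,b]^2_\leq}(Z)]$. I would expand the second moment of $\be^\eps_{[a,b]^2_\leq}(Z)-\mbf E[\be^\eps_{[a,b]^2_\leq}(Z)]$ using Wick's theorem on the characteristic-function representation above. Each of the three pairings of the four time variables produces a singular term matching exactly the subtracted mean-product, leaving a remainder that is integrable over $[a,b]^4$ uniformly in $\eps$. The mixed second moment $\mbf E\bigl[(\be^\eps-\mbf E\be^\eps)(\be^{\eps'}-\mbf E\be^{\eps'})\bigr]$ is then shown to admit a limit as $\eps,\eps'\to0$ by Dominated Convergence, establishing that the centered approximations form a Cauchy family in $L^2$.

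Finally, I would transfer both statements to $Z=B^{x,x}_t$ by approximation. On any subinterval $[a,b]\subset[0,t')$ with $t'<t$, the law of the bridge is absolutely continuous with respect to that of $B^x$ with an explicit, bounded Radon--Nikodym derivative equal to $p_{t-s}(y-x)/p_t(x-x)$ evaluated at the endpoint, so the Brownian conclusions transfer immediately. A separate uniform second-moment bound near the endpoint $r=t$, computed again from the Fourier representation using the explicit bridge covariance $\mathrm{Cov}(B^{x,x}_t(r),B^{x,x}_t(s))=\min(r,s)-rs/t$, handles the region near $t$ and lets me pass to the full intervals. The main obstacle I anticipate is the Wick computation in part (2): it is here, and only here, that a precise algebraic cancellation between the singular kernel products and the subtracted divergent mean is required to yield a uniformly integrable remainder, and it is this cancellation that is special to two dimensions.
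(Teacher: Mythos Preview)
The paper does not actually prove this theorem: it is quoted from \cite[Theorems 2.3.2 and 2.4.1]{ChenBook} and \cite[Theorem 3.7]{Matsuda}, and the only additional observation made in the paper is that the case of general $x\in D$ follows from the case $x=0$ by the translation invariance $\be^\eps_A(Z)=\be^\eps_A(Z-x)$. Your sketch is essentially the standard argument found in those references (Fourier/characteristic-function representation, $L^2$-Cauchy computation, Wick cancellation for the centered diagonal case, and absolute continuity of the bridge with respect to Brownian motion away from the terminal time), so in that sense you are reproducing the cited proofs rather than proposing an alternative.

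One small gap to flag: in part (1) you write that ``the time separation is bounded below by $c-b>0$,'' but the statement allows $b=c$, in which case $[a,b]\times[c,d]$ touches the diagonal at the single point $(b,b)$ and your uniform lower bound fails. The limit still exists in this borderline case (the near-diagonal region has vanishing area and the $L^2$ computation still goes through), but your argument as written needs a minor adjustment to cover it---for instance, splitting off a neighborhood of the corner and controlling it separately, or simply noting that the relevant integrals $\iint_{[a,b]\times[c,d]}|r_1-r_2|^{-1}\,\dd r$ remain finite even when $b=c$.
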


\begin{notation}
\label{Notation: SILT Shorthand}
For any $t>0$, we use the following shorthands:
\[\be^\eps_t(Z)=\be^\eps_{[0,t]_{\leq}^2}(Z)
\qquad\text{and}\qquad
\ga_t(Z)=\ga_{[0,t]_{\leq}^2}(Z).\]
\end{notation}

In \cite{ChenBook,Matsuda}, Theorem \ref{Theorem: Existence of SILT} is only
proved for $x=0$. That said, one easily obtains
the general statement for $x\in\mbb R^2$ by noting that
SILTs are invariant under constant shifts, i.e.,
$\be^\eps_A(Z)\deq\be^\eps_A(Z-x)$.
In addition to this, SILTs of Brownian motions and bridges
satisfy the following scaling properties:

\begin{lemma}
\label{Lemma: Scaling of Approximate SILT}
Let $t>0$ and $x\in\mbb R^2$ be fixed. For every $\eps>0$ and $A\subset[0,t]^2$,
\[\be^{\eps}_A(B^{x,x}_t)\deq t\,\be^{\eps/t}_{A/t}(B^{0,0}_1)
\qquad\text{and}\qquad
\be^{\eps}_{A}(B^x)\deq t\,\be^{\eps/t}_{A/t}(B^0).\]
In particular, if $A=[a,b]^2_{\leq}$ for $0\leq a<b\leq t$, then by Theorem \ref{Theorem: Existence of SILT}-(2),
\[\ga_A(B^{x,x}_t)\deq t\,\ga_{A/t}(B^{0,0}_1)
\qquad
\text{and}
\qquad
\ga_{A}(B^x)\deq t\,\ga_{A/t}(B^0).\]
\end{lemma}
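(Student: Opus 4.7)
The plan is to reduce everything to standard Brownian scaling and the self-similarity of the heat kernel. First, since the Gaussian density $p_\eps$ is translation invariant, $\be^\eps_A(Z-x)=\be^\eps_A(Z)$ for any process $Z$ and any constant $x\in\mbb R^2$. Applying this with $Z=B^{x,x}_t$ (respectively $Z=B^x$), I can assume without loss of generality that $x=0$, and it suffices to prove
\[\be^{\eps}_A(B^{0,0}_t)\deq t\,\be^{\eps/t}_{A/t}(B^{0,0}_1)
\qquad\text{and}\qquad
\be^{\eps}_{A}(B^0)\deq t\,\be^{\eps/t}_{A/t}(B^0).\]

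Next, I would invoke the usual Brownian scaling identities at the level of processes: if $B^0$ is a planar Brownian motion, then $\bigl(B^0(tr)\bigr)_{r\geq0}\deq \bigl(\sqrt{t}\,B^0(r)\bigr)_{r\geq0}$, and analogously $\bigl(B^{0,0}_t(tr)\bigr)_{r\in[0,1]}\deq \bigl(\sqrt{t}\,B^{0,0}_1(r)\bigr)_{r\in[0,1]}$ (both as processes, which is what is needed since the SILT depends on the path at two distinct times). Plugging the scaled path into the definition of the approximate SILT and performing the substitution $r_i=ts_i$ yields, for $Z\in\{B^{0,0}_t,B^0\}$ and $\tilde Z$ the corresponding rescaled process on $[0,1]$,
\[\be^\eps_A(Z)\deq \int_{A/t}p_\eps\bigl(\sqrt{t}\bigl(\tilde Z(s_1)-\tilde Z(s_2)\bigr)\bigr)\,t^2\,\dd s_1\dd s_2.\]

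The final ingredient is the homogeneity of the planar Gaussian kernel: a direct computation from \eqref{Equation: Gaussian Kernel} gives $p_\eps(\sqrt{t}\,z)=t^{-1}p_{\eps/t}(z)$ for every $z\in\mbb R^2$. Substituting this in the previous display produces exactly $t\,\be^{\eps/t}_{A/t}(\tilde Z)$, proving both scaling identities for $\be^\eps$. The statement for $\ga$ then follows immediately by taking the $\eps\to 0$ limit in probability: by Theorem \ref{Theorem: Existence of SILT}-(2), $\be^\eps_A(Z)-\mbf E[\be^\eps_A(Z)]$ converges to $\ga_A(Z)$ in probability (when $A$ is of the relevant form), and subtracting means commutes with the scaling identity, so the limit in distribution of the left-hand side equals $t$ times the limit in distribution of the right-hand side. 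I do not foresee a genuine obstacle here; the only point to watch is that the Brownian scaling is applied at the level of the joint law of the path (so that functionals depending on two time points are preserved), and that the change of variable respects the form of the set $A$, which it does since $(A/t)\subset[0,1]^2$.
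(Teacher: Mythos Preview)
Your proposal is correct and follows essentially the same route as the paper: reduce to the origin via translation invariance of $p_\eps$, apply Brownian scaling at the path level, change variables $r_i=ts_i$, and use the homogeneity $p_\eps(\sqrt t\,z)=t^{-1}p_{\eps/t}(z)$; the passage to $\ga$ via centered limits is likewise what the paper intends by invoking Theorem~\ref{Theorem: Existence of SILT}-(2).
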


This is proved in Section \ref{Section: Scaling of Approximate SILT}. In closing
this section, we state
two properties of SILTs that are instrumental in our calculations:

\begin{lemma}
\label{Lemma: Uniform Integrability of SILT}
For every $c>0$,
there exists $\theta_c>0$ such that
for every $t\in(0,\theta_c)$,
\begin{align}
\label{Equation: Uniform Integrability of SILT - Renormalized}
\sup_{x\in D,~\eps>0}\mbf E\left[\exp\Big(c\big(\be^\eps_t(Z)-\mbf E[\be^\eps_t(Z)]\big)\Big)\right]
<\infty
\end{align}
and
\begin{align}
\label{Equation: Uniform Integrability of SILT - Non-Renormalized}
\sup_{x\in D,~\eps>0}\mbf E\left[\exp\Big(c\be^\eps_{[0,t/2]\times[t/2,t]}(Z)\Big)\right]<\infty,
\end{align}
where $Z$ is either $B^{x,x}_t$ or $B^x$.
\end{lemma}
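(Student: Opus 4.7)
The plan is to combine the translation invariance of SILTs with the scaling from Lemma \ref{Lemma: Scaling of Approximate SILT} to reduce both bounds to uniform-in-$\eps$ exponential moment estimates on the unit time interval, and then invoke Wiener chaos hypercontractivity (for the renormalized SILT) and a Markov splitting argument (for the off-diagonal piece). First, since $\be^\eps_A(Z) = \be^\eps_A(Z-x)$, I may take $x=0$ throughout, which takes care of the supremum over $x \in D$. Applying Lemma \ref{Lemma: Scaling of Approximate SILT} then gives
\[\be^\eps_t(Z) - \mbf E\big[\be^\eps_t(Z)\big] \deq t\Big(\be^{\eps/t}_{[0,1]^2_\leq}(Z') - \mbf E\big[\be^{\eps/t}_{[0,1]^2_\leq}(Z')\big]\Big),\]
with $Z' \in \{B^0, B^{0,0}_1\}$, and an analogous identity for $\be^\eps_{[0,t/2]\times[t/2,t]}$. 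Both estimates thereby reduce to uniform-in-$\eps'$ exponential moment bounds on the unit interval, with the coefficient in the exponent rescaled by $t$; this is why choosing $\th_c$ small enough absorbs any positive critical exponent into the product $c\th_c$.

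For the renormalized bound \eqref{Equation: Uniform Integrability of SILT - Renormalized}, I will invoke the fact that the centered unit-time approximate SILT admits uniformly (in $\eps'$) bounded exponential moments up to some positive critical exponent. The mollified centered SILT is a second-order Wiener chaos variable whose $L^2$ kernel converges to that of the limit $\ga_1(Z')$, so Nelson's hypercontractivity transfers exponential integrability from the limit to the approximates uniformly in $\eps'$. For Brownian motion this is the content of the moment machinery in \cite[Chapter 2]{ChenBook}, and for the Brownian bridge the analogous chaos expansion and moment bounds are worked out in \cite[Section 3]{Matsuda}.

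For the off-diagonal bound \eqref{Equation: Uniform Integrability of SILT - Non-Renormalized}, the key observation is that on $[0,t/2]\times[t/2,t]$ the two time arguments live on disjoint intervals, so via the convolution identity from Remark \ref{Remark: Alternate MILT} I can write
\[\be^\eps_{[0,t/2]\times[t/2,t]}(Z) = \int_{\mbb R^2}\Big(\int_0^{t/2}p_{\eps/2}\big(Z(r)-y\big)\d r\Big)\Big(\int_{t/2}^{t}p_{\eps/2}\big(Z(r)-y\big)\d r\Big)\d y.\]
Conditioning on $Z(t/2)$ and applying the Markov/bridge-splitting property makes the two factors independent smoothed occupation densities of conditionally independent planar pieces; after rescaling to unit time, the required bound reduces to a uniform-in-$\eps'$ exponential moment for the MILT of two independent planar Brownian motions or bridges. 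In dimension two this MILT requires no renormalization and admits a positive critical exponent by classical Le Gall-type arguments; see \cite[Chapters 3--4]{ChenBook} and \cite[Section 3]{Matsuda}.

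The main obstacle I anticipate is confirming that each cited exponential moment bound is truly uniform in the mollification parameter $\eps'$, and not just stated for the limiting random variables. If a direct reference is unavailable, the route above will deliver the uniform bounds by hand: Wiener chaos together with hypercontractivity for the renormalized SILT, and a direct Le Gall-style subadditivity/Markov argument at the level of $p_{\eps/2}$ for the MILT. In both approaches the Gaussian mollification only improves the relevant kernel estimates, so no new essential difficulty should arise.
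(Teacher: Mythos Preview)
For \eqref{Equation: Uniform Integrability of SILT - Renormalized} your plan coincides with the paper's: it simply cites \cite[Lemma A.1]{GuXu} and \cite[Theorem 3.7]{Matsuda}, and your hypercontractivity sketch is a fair gloss on what underlies those references. Likewise, for \eqref{Equation: Uniform Integrability of SILT - Non-Renormalized} with $Z=B^x$ your midpoint splitting matches the paper, which invokes \cite[Proposition 2.3.4]{ChenBook} to identify $\be^\eps_{[0,t/2]\times[t/2,t]}(B^x)\deq\al^\eps_{[0,t/2]^2}({_1}B^0,{_2}B^0)$ and then appeals to Lemma \ref{Lemma: MILT is UI}.

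The gap is in the bridge case of \eqref{Equation: Uniform Integrability of SILT - Non-Renormalized}. Conditioning $B^{0,0}_1$ on its midpoint $y$ and time-reversing one half gives two conditionally independent bridges from $y$ to $0$; after translation you must control $\mbf E\big[\exp\big(c\,\al^\eps_{[0,1/2]^2}({_1}B^{0,-y}_{1/2},{_2}B^{0,-y}_{1/2})\big)\big]$ uniformly in $\eps$ \emph{and} in $y\in\mbb R^2$ (or at least integrably against the Gaussian midpoint density). None of the cited sources provide this: the MILT estimates in the paper (Lemmas \ref{Lemma: MILT Scaling Property} and \ref{Lemma: MILT is UI}) are stated only for loops ${_i}B^{x_i,x_i}_t$, and the bounding trick $p_{t-r_m}(\cdot)/p_t(0)\leq 2$ used on \eqref{Equation: Moments of MILT Function} in Section \ref{Section: Existence of MILT} relies on the bridge returning to its starting point. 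The monotonicity you would need (roughly, that the exponential moment is maximized at $y=0$) is plausible but is a genuine extra lemma you have not supplied.

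The paper avoids this issue entirely by a different decomposition. After scaling to $t\,\be^{\eps/t}_{[0,1/2]\times[1/2,1]}(B^{0,0}_1)$, it splits $[0,1/2]\times[1/2,1]$ into three rectangles indexed by a small $\de\in(0,1/4)$. On $[0,1/2]\times[1/2,1-\de]$, which stays away from the terminal time, the law of $(B^{0,0}_1(s):s\in[0,1-\de])$ is absolutely continuous with respect to that of $(B^0(s):s\in[0,1-\de])$ with Radon--Nikodym derivative bounded by $1/\de$, reducing this piece to the Brownian-motion case already handled. The two pieces touching time $1$ are controlled directly by the specific estimates in \cite[Theorem 3.7 (iii)]{Matsuda}. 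This route never conditions on the midpoint and so never faces the uniformity-in-$y$ problem.
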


The estimate
\eqref{Equation: Uniform Integrability of SILT - Renormalized} was proved in \cite[Lemma A.1]{GuXu} and \cite[Theorem 3.7]{Matsuda}.
We did not find \eqref{Equation: Uniform Integrability of SILT - Non-Renormalized} in the literature, so we prove it in Section \ref{Section: Uniform Integrability of SILT - Non-Renormalized} below.

\begin{remark}
\eqref{Equation: Uniform Integrability of SILT - Renormalized},
Theorem \ref{Theorem: Existence of SILT}-(2), and Lemma \ref{Lemma: Scaling of Approximate SILT}
immediately imply the following:
For every $c>0$,
there exists $\theta_c>0$ such that
for every $t\in(0,\theta_c)$,
\begin{align}
\label{Equation: Exponential Moment SILT}
\sup_{x\in D}\mbf E\left[\exp\big(c\ga_t(Z)\big)\right]<\infty,
\end{align}
where $Z$ is either $B^{x,x}_t$ or $B^x$.
\end{remark}

We end the section on SILTs with the following calculation,
which explains why subtracting $\be^\eps_t(Z)$'s mean
is necessary to get a nontrivial limit in Theorem \ref{Theorem: Existence of SILT}-(2):

\begin{lemma}[{\cite[Lemma 3.4 and Remark 3.5]{Matsuda}}]
\label{Lemma: Smoothed Expectation of SILT}
Let $t>0$ be fixed.
\begin{enumerate}[(1)]
\item If $0\leq a<b\leq t$, then
as $\eps\to0$, one has
\[\mbf E\big[\be^\eps_{[a,b]^2_\leq}(B^{0,0}_t)\big]
=\frac{b-a}{2\pi}\big(\log(1/\eps)+\log t\big)+\frac1{2\pi}\int_0^{b-a}\log r-\log(t-r)\d r+o(1)\]
and
\[\mbf E\big[\be^\eps_{[a,b]^2_{\leq}}(B^0)\big]
=\tfrac{b-a}{2\pi}\big(\log(1/\eps)+\log(b-a)-1\big)+o(1).\]
\item If $0\leq a<b\leq c<d\leq t$, then as $\eps\to0$, one has
\[\mbf E\big[\be^\eps_{[a,b]\times[c,d]}(B^{0,0}_t)\big]=\frac1{2\pi}\int_{c-a}^{d-a}\log r-\log(t-r)\d r
-\frac1{2\pi}\int_{c-b}^{d-b}\log r-\log(t-r)\d r+o(1).\]
\end{enumerate}
\end{lemma}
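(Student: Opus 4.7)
The plan is to exploit the fact that for $Z = B^0$ or $Z = B^{0,0}_t$ and $r_1 < r_2$, the increment $Z(r_2) - Z(r_1)$ is a centered Gaussian in $\mbb R^2$ with independent coordinates each of variance $\sigma^2(r_1, r_2)$, where $\sigma^2 = r_2 - r_1$ in the motion case and $\sigma^2 = (r_2-r_1)(t-(r_2-r_1))/t$ in the bridge case (this last identity follows from $\text{Cov}(B^{0,0}_t(s), B^{0,0}_t(u)) = s(t-u)/t$ for $s\leq u$). Combined with the elementary identity $\mbf E[p_\eps(X)] = \frac{1}{2\pi(\eps+\sigma^2)}$ for $X \sim \mc N(0, \sigma^2 I_2)$, every expectation in the statement reduces to an explicit deterministic integral.

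For Part (1), Fubini and the substitution $s = r_2 - r_1$ give
\[\mbf E\big[\be^\eps_{[a,b]^2_\leq}(Z)\big] = \frac{1}{2\pi}\int_0^{b-a}\frac{b-a-s}{\eps+\sigma^2(s)}\,ds.\]
For $Z = B^0$, the substitution $v = \eps + s$ immediately yields the claimed formula, with the $\log(1/\eps)$ piece emerging from $\log(\eps + (b-a)) - \log \eps$. For $Z = B^{0,0}_t$, I would factor $\eps + s(t-s)/t = (s_+ - s)(s - s_-)/t$, where $s_\pm = (t \pm \sqrt{t^2 + 4t\eps})/2$, and apply the partial fraction
\[\frac{1}{(s_+-s)(s-s_-)} = \frac{1}{s_+ - s_-}\bigg(\frac{1}{s_+-s} + \frac{1}{s-s_-}\bigg),\]
evaluating the two resulting pieces via $u = s_+ - s$ and $v = s - s_-$. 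The logarithmic divergence appears as the endpoint contribution $\log(-s_-) \sim \log\eps$ in the second piece. I would then use the expansions $-s_- = \eps + O(\eps^2)$ and $s_+ - s_- = t + O(\eps)$ to collect the $O(1)$ contributions, and finally recognize the resulting combination of $(b-a)\log(b-a)$, $(t-(b-a))\log(t-(b-a))$ and $\log t$ terms as $(b-a)\log t + \int_0^{b-a}[\log r - \log(t-r)]\,dr$ by direct antidifferentiation.

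For Part (2), the integrand $\frac{1}{2\pi(\eps + \sigma^2(r_1,r_2))}$ is bounded uniformly in $\eps$ on $[a,b]\times[c,d]$ since $r_2 - r_1 \geq c - b > 0$, so dominated convergence yields
\[\lim_{\eps\to0}\mbf E\big[\be^\eps_{[a,b]\times[c,d]}(B^{0,0}_t)\big] = \frac{1}{2\pi}\int_a^b\int_c^d\frac{t}{(r_2-r_1)(t-(r_2-r_1))}\,dr_2\,dr_1.\]
The partial fraction $\frac{t}{s(t-s)} = \frac{1}{s} + \frac{1}{t-s}$ reduces the inner integral to a sum of logarithms; integrating in $r_1$ and applying $w = t - r$ to the pieces coming from $\frac{1}{t - (r_2-r_1)}$ puts the expression in the form $\int [\log r - \log(t-r)]\,dr$ over the intervals $[c-a, d-a]$ and $[c-b, d-b]$ displayed in the statement (using that both integrals are differences of antiderivatives of the same function).

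The main obstacle is the Brownian bridge case of Part (1): I need to track $-s_- = \eps + O(\eps^2)$ and $s_+ - s_- = t + O(\eps)$ precisely enough to confirm that the $\log(1/\eps)$ coefficient is exactly $(b-a)/(2\pi)$ with no contamination from the prefactor $t/(s_+ - s_-)$, and to verify that all remaining errors of size $O(\eps\log(1/\eps))$ are absorbed into $o(1)$. Everything else is elementary calculus.
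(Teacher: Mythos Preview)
The paper does not prove this lemma; it is cited verbatim from \cite[Lemma~3.4 and Remark~3.5]{Matsuda}. Your computational approach is the natural one and is correct in substance: reducing to the Gaussian identity $\mbf E[p_\eps(X)]=\frac{1}{2\pi(\eps+\sigma^2)}$, handling the Brownian-motion case of Part~(1) by the substitution $v=\eps+s$, and the bridge case by factoring the quadratic $\eps+s(t-s)/t=(s_+-s)(s-s_-)/t$ and tracking $-s_-=\eps+O(\eps^2)$, $s_+-s_-=t+O(\eps)$. The algebra checks out, and the $O(\eps\log(1/\eps))$ errors from the prefactor $t/(s_+-s_-)=1+O(\eps)$ are indeed harmless.

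There is one small gap in Part~(2). You justify dominated convergence by asserting $r_2-r_1\geq c-b>0$, but the hypothesis is $b\leq c$, not $b<c$; the case $b=c$ is allowed (and is precisely the case $[0,t]\times[t,2t]$ used later in the paper's proof of Lemma~\ref{Lemma: Hilbert-Schmidt Cauchy Sequence}). When $b=c$ the integrand is not uniformly bounded in $\eps$ near the corner $(b,b)$. The fix is immediate: since $\eps\mapsto\frac{1}{2\pi(\eps+\sigma^2)}$ is decreasing, one has $f_\eps\leq f_0=\frac{t}{2\pi(r_2-r_1)(t-(r_2-r_1))}$, and $f_0$ is integrable on $[a,b]\times[c,d]$ even when $b=c$ (the singularity at the single corner is logarithmic after the change of variables $u=b-r_1$, $v=r_2-b$). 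Alternatively, monotone convergence applies directly. With this patch your argument is complete.
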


\begin{remark}
\label{Remark: Renormalization 2}
Following-up on Remark \ref{Remark: Renormalization 1},
the presence of $\log(1/\eps)$ in Lemma \ref{Lemma: Smoothed Expectation of SILT}-(1)
explains the necessity of using a renormalization constant of the form
\eqref{Equation: Alternate Renormalization}. Moreover, the presence of
$\tfrac{b-a}{2\pi}\log t$ and $\tfrac{b-a}{2\pi}\log(b-a)$ explains the presence
of the logarithmic terms in \eqref{Equation: Main Asymptotic 1}
and \eqref{Equation: Main Asymptotic 2}.
See Remark \ref{Remark: Renormalization 3} for more details.
\end{remark}

\subsubsection{Mutual Intersection}

The construction and basic properties of the MILT that we need in
this paper are summarized in the following results:

\begin{theorem}
\label{Theorem: Existence of MILT}
Let $t>0$ and $x_1,x_2\in\mbb R^2$ be fixed.
Suppose that the pair $(Z_1,Z_2)$ is either $({_1}B^{x_1},{_2}B^{x_2})$
or $({_1}B^{x_1,x_1}_t,{_2}B^{x_2,x_2}_t)$.
For any Borel measurable $A_1,A_2\subset[0,t]$,
there exists a random variable $\al_{A_1\times A_2}(Z_1,Z_2)$ such that
\[\al_{A_1\times A_2}(Z_1,Z_2)=\lim_{\eps\to0}\al^\eps_{A_1\times A_2}(Z_1,Z_2)\qquad\text{in probability}.\]
\end{theorem}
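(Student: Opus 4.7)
I plan to prove that $\{\al^{\eps}_{A_1\times A_2}(Z_1,Z_2):\eps>0\}$ is Cauchy in $L^2$, which automatically gives convergence in probability to the desired limit $\al_{A_1\times A_2}(Z_1,Z_2)$. The key step is to show that the cross moment $\mbf E[\al^{\eps}\al^{\eps'}]$ admits a finite joint limit $L$ as $\eps,\eps'\to 0$: expanding
\begin{align*}
\mbf E\big[|\al^{\eps}-\al^{\eps'}|^2\big]=\mbf E\big[(\al^{\eps})^2\big]-2\mbf E\big[\al^{\eps}\al^{\eps'}\big]+\mbf E\big[(\al^{\eps'})^2\big]
\end{align*}
and using that joint limit (with the diagonal case $\eps=\eps'$ handling the first and third terms) then yields the Cauchy property.

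To execute this, I would use the independence of $Z_1$ and $Z_2$ together with Fubini to write
\begin{align*}
\mbf E\big[\al^{\eps}\al^{\eps'}\big] = \int_{A_1^2\times A_2^2}\mbf E\!\left[p_{\eps}\bigl(Z_1(r_1)-Z_2(s_1)\bigr)\, p_{\eps'}\bigl(Z_1(r_2)-Z_2(s_2)\bigr)\right]\dd r_1\,\dd r_2\,\dd s_1\,\dd s_2.
\end{align*}
The inner expectation is the joint density at the origin of a Gaussian vector in $\mbb R^4$ whose two coordinate pairs are independent, so it factors as the square of a $2$-dimensional Gaussian density with covariance matrix
\begin{align*}
C=\begin{pmatrix}r_1+s_1+\eps & r_1\wedge r_2+s_1\wedge s_2\\ r_1\wedge r_2+s_1\wedge s_2 & r_2+s_2+\eps'\end{pmatrix}
\end{align*}
in the Brownian motion case, shifted by $(x_1-x_2,x_1-x_2)$. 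Direct expansion (WLOG $r_1\leq r_2$, $s_1\leq s_2$) gives $\det C\geq (r_1+s_1)\bigl((r_2-r_1)+(s_2-s_1)\bigr)$, and the Gaussian exponential factor lies in $[0,1]$.

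Thus the integrand is dominated uniformly in $\eps,\eps'$ by a multiple of $\bigl[(r_1+s_1)((r_2-r_1)+(s_2-s_1))\bigr]^{-1}$, and integrability over the domain reduces, via the substitution $(u,v,u',v')=(r_1,s_1,r_2-r_1,s_2-s_1)$, to the elementary bound $\int_{[0,t]^2}\frac{\dd u\,\dd v}{u+v}<\infty$. Since the integrand converges pointwise as $\eps,\eps'\to 0$ off a null set, dominated convergence delivers the finite joint limit. The bridge case follows the same template with the Brownian covariance $r\wedge r'$ replaced by the bridge covariance $r\wedge r'-rr'/t$ and $C$ redone accordingly.

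The main obstacle is the two-dimensional criticality of the $L^2$-estimate: the planar MILT sits exactly at the borderline of integrability, so the lower bound on $\det C$ must be sharp enough that the dominating integral still converges (the two-dimensional integral $\int_{[0,t]^2}\frac{\dd u\,\dd v}{u+v}$ is only finite because it is logarithmic). The bridge case requires additional care because the bridge variance $r(1-r/t)$ vanishes at the endpoints $r\in\{0,t\}$, and one must verify that the corresponding determinant still admits a lower bound leaving the relevant integral finite near these endpoints.
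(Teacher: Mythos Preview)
Your high-level strategy matches the paper's: both prove that $\{\al^\eps\}$ is Cauchy in $L^2$ by showing that the cross moment $\mbf E[\al^\eps\al^{\eps'}]$ has a finite joint limit as $\eps,\eps'\to 0$. The execution, however, is different. You compute the cross moment by evaluating the Gaussian density of the $4$-vector $(Z_1(r_1)-Z_2(s_1),Z_1(r_2)-Z_2(s_2))$ directly and lower-bounding $\det C$. The paper instead uses the convolution representation $\al^\eps_{A_1\times A_2}=\int_{\mbb R^2}\prod_i\int_{A_i}p_{\eps/2}(Z_i(r)-y)\dd r\,\dd y$, which factorizes the moment over the two independent processes and leads to an explicit formula $\int \msf g^m_{t,A_1}(x_1,z)\msf g^m_{t,A_2}(x_2,z)\dd z$ for \emph{all} integer moments $m\geq 1$ at once. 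This extra generality is not idle: the paper needs the $m$-th moment formula immediately afterwards for the scaling and exponential-moment lemmas (Lemmas~\ref{Lemma: MILT Scaling Property} and~\ref{Lemma: MILT is UI}). Your determinant argument is more elementary for $m=2$ but would have to be redone separately for higher moments.

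For the bridge case, the paper sidesteps the endpoint degeneracy you flag by a clean reduction to the Brownian-motion result: it splits $[0,t]=[0,t/2]\cup[t/2,t]$, uses the time-reversal symmetry $\msf g^m_{t,[t/2,t]}=\msf g^m_{t,[0,t/2]}$, and on $[0,t/2]$ bounds the extra bridge factor $p_{t-r_m}(\cdot)/p_t(0)\leq 2$, reducing everything to the known finiteness of $\int(\msf f^m_{[0,t/2]})^2$ for Brownian motion. This avoids any direct estimate of the bridge covariance determinant near $r\in\{0,t\}$. One small gap in your sketch: your ``WLOG $r_1\leq r_2$, $s_1\leq s_2$'' uses only one symmetry, but the orderings of $(r_1,r_2)$ and $(s_1,s_2)$ are independent; after symmetrizing to $r_1\leq r_2$ you still have the mixed case $s_1>s_2$, where $C_{12}=r_1+s_2$ and the determinant bound takes a different (though equally integrable) form.
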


The statement involving independent Brownian motions $({_1}B^{x_1},{_2}B^{x_2})$ in
Theorem \ref{Theorem: Existence of MILT} is well-known
(e.g., \cite[Pages 41 and 42]{ChenBook}). While we expect that
the corresponding statement for independent Brownian bridges $({_1}B^{x_1,x_1}_t,{_2}B^{x_2,x_2}_t)$ is folklore (as
its proof is very similar), we did not find it in the literature.
Thus, for convenience, we provide a proof in
Section \ref{Section: Existence of MILT} below.

\begin{notation}
\label{Notation: MILT Shorthand}
For every $\eps,t>0$, we use the following shorthands:
\[\al^\eps_t(Z_1,Z_2)=\al^\eps_{[0,t]^2}(Z_1,Z_2)
\qquad\text{and}\qquad
\al_t(Z_1,Z_2)=\al_{[0,t]^2}(Z_1,Z_2).\]
\end{notation}

The following result shows that disjoint paths have a MILT of zero:

\begin{lemma}
\label{Lemma: No Intersection Means no MILT}
Let $t>0$ and $x_1,x_2\in\mbb R^2$ be fixed.
Suppose that the pair $(Z_1,Z_2)$ is either $({_1}B^{x_1},{_2}B^{x_2})$
or $({_1}B^{x_1,x_1}_t,{_2}B^{x_2,x_2}_t)$.
Given a positive $\theta>0$,
let us denote the event
\[\mf O_{t,\theta}(Z_1,Z_2)=\left\{\inf_{(r_1,r_2)\in[0,t]^2}\|Z_1(r_1)-Z_2(r_2)\|\geq\theta\right\}.\]
Almost surely, $\al_t(Z_1,Z_2)\mbf 1_{\mf O_{t,\theta}(Z_1,Z_2)}=0.$
\end{lemma}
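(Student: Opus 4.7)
The plan is to exploit the explicit form of the Gaussian kernel $p_\eps$, which decays super-exponentially fast in $1/\eps$ whenever its argument is bounded below. On the event $\mf O_{t,\theta}(Z_1,Z_2)$, the two processes are uniformly separated by $\theta$, so $p_\eps\big(Z_1(r_1)-Z_2(r_2)\big)$ is, deterministically on this event, uniformly small over $(r_1,r_2)\in[0,t]^2$. Integrating then yields a pathwise (on $\mf O_{t,\theta}$) vanishing upper bound on the approximate MILT $\al_t^\eps(Z_1,Z_2)$, and the lemma will follow from the definition of $\al_t(Z_1,Z_2)$ as a limit of $\al_t^\eps(Z_1,Z_2)$.

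More precisely, I would begin by noting that for any $(r_1,r_2)\in[0,t]^2$, on the event $\mf O_{t,\theta}(Z_1,Z_2)$ one has $|Z_1(r_1)-Z_2(r_2)|\geq\theta$, so that
\[p_\eps\big(Z_1(r_1)-Z_2(r_2)\big)=\frac{\mr e^{-|Z_1(r_1)-Z_2(r_2)|^2/2\eps}}{2\pi\eps}\leq\frac{\mr e^{-\theta^2/2\eps}}{2\pi\eps}.\]
Integrating this bound over $A=[0,t]^2$ yields
\[\al^\eps_t(Z_1,Z_2)\,\mbf 1_{\mf O_{t,\theta}(Z_1,Z_2)}\leq\tfrac{t^2}{2\pi\eps}\mr e^{-\theta^2/2\eps},\]
and the right-hand side tends to $0$ as $\eps\to0$.

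To transfer this to $\al_t(Z_1,Z_2)$, I would invoke Theorem \ref{Theorem: Existence of MILT}, which ensures that $\al^\eps_t(Z_1,Z_2)\to\al_t(Z_1,Z_2)$ in probability. Consequently, there exists a sequence $\eps_k\downarrow0$ along which the convergence holds almost surely. Along this sequence, the deterministic bound above forces $\al^{\eps_k}_t(Z_1,Z_2)\mbf 1_{\mf O_{t,\theta}(Z_1,Z_2)}\to0$ almost surely, and hence the almost sure limit $\al_t(Z_1,Z_2)\mbf 1_{\mf O_{t,\theta}(Z_1,Z_2)}$ equals $0$.

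There is no real obstacle: the argument is essentially a dominated-convergence/soft-analysis consequence of the Gaussian decay. The only minor subtlety is that Theorem \ref{Theorem: Existence of MILT} only provides convergence in probability, which is handled by the standard subsequence trick; alternatively, since the upper bound is deterministic, one may apply it directly inside the probability statement to conclude that $\al_t(Z_1,Z_2)\mbf 1_{\mf O_{t,\theta}(Z_1,Z_2)}\leq\eta$ in probability for every $\eta>0$, and then conclude using that $\al_t(Z_1,Z_2)\mbf 1_{\mf O_{t,\theta}(Z_1,Z_2)}\geq0$.
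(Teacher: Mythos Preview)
Your proof is correct and follows essentially the same approach as the paper: bound $p_\eps\big(Z_1(r_1)-Z_2(r_2)\big)$ by $\tfrac{1}{2\pi\eps}\mr e^{-\theta^2/2\eps}$ on the event $\mf O_{t,\theta}$, integrate over $[0,t]^2$, and pass to the limit using Theorem~\ref{Theorem: Existence of MILT}. Your handling of the convergence-in-probability-to-almost-sure step via a subsequence is in fact slightly more explicit than the paper's version.
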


See Section \ref{Section: No Intersection Means no MILT} for a proof.
The next result combines a scaling property and the claim that
MILT's moments are maximized if two paths start at the same location:

\begin{lemma}
\label{Lemma: MILT Scaling Property}
Let $t>0$ be fixed.
For any integer $m\geq1$,
\[\sup_{x_1,x_2\in\mbb R^2}\mbf E\big[\al_t({_1}B^{x_1,x_1}_t,{_2}B^{x_2,x_2}_t)^m\big]\leq t^m\,\mbf E\big[\al_1({_1}B^{0,0}_1,{_2}B^{0,0}_1)^m\big]<\infty\]
and
\[\sup_{x_1,x_2\in\mbb R^2}\mbf E\big[\al_t({_1}B^{x_1},{_2}B^{x_2})^m\big]\leq t^m\,\mbf E\big[\al_1({_1}B^0,{_2}B^0)^m\big]<\infty.\]
\end{lemma}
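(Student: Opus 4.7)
The plan is to combine Brownian scaling with a Fourier-theoretic symmetry argument, reducing both claims to a single finiteness statement at the common base point $0$.

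\textbf{Step 1: Scaling reduction.} By Brownian scaling, $\{B^x(s):s\in[0,t]\}\deq\{x+\sqrt{t}\,B^0(s/t):s\in[0,t]\}$, and the analogous identity holds for bridges. Substituting into the definition of $\al^\eps_t$, changing variables $r_i\mapsto tr_i$, and using the Gaussian kernel scaling $p_\eps(\sqrt{t}\,y)=t^{-1}p_{\eps/t}(y)$, I obtain
\[
\al^\eps_t\bigl({_1}B^{x_1},{_2}B^{x_2}\bigr)\deq t\,\al^{\eps/t}_1\bigl({_1}B^{x_1/\sqrt{t}},{_2}B^{x_2/\sqrt{t}}\bigr),
\]
and the analogous identity for bridges. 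Sending $\eps\to0$ (both sides converge in probability by Theorem \ref{Theorem: Existence of MILT}) and raising to the $m$-th power reduces the lemma to showing that $\mbf E[\al_1(Z_1^{y_1},Z_2^{y_2})^m]$ is maximized at $y_1=y_2$ and finite there, where $(Z_1^y,Z_2^y)$ denotes either $({_1}B^y,{_2}B^y)$ or $({_1}B^{y,y}_1,{_2}B^{y,y}_1)$.

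\textbf{Step 2: Fourier maximum principle.} Using Fourier inversion $p_\eps(y)=(2\pi)^{-2}\int_{\mbb R^2}e^{i\xi\cdot y}e^{-\eps|\xi|^2/2}\dd\xi$, expanding the $m$-th power of $\al^\eps_1$, writing $Z_i(s)=y_i+\tilde Z_i(s)$ with $\tilde Z_i$ centered, and taking expectations yields
\[
\mbf E\bigl[\al^\eps_1(Z_1^{y_1},Z_2^{y_2})^m\bigr]=\tfrac{1}{(2\pi)^{2m}}\int\dd\xi\int_{[0,1]^{2m}}\dd r\,e^{-\eps\sum_j|\xi_j|^2/2}\,\Psi(\xi,r)\,e^{i(y_1-y_2)\cdot\sum_j\xi_j},
\]
where $\Psi(\xi,r)=\mbf E\bigl[e^{i\sum_j\xi_j\cdot\tilde Z_1(r_{j,1})}\bigr]\,\mbf E\bigl[e^{-i\sum_j\xi_j\cdot\tilde Z_2(r_{j,2})}\bigr]$ is a product of characteristic functions of centered Gaussian vectors and is therefore real and nonnegative. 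The symmetry $\xi\mapsto-\xi$ preserves both $\Psi$ and the Gaussian weight, so the complex exponential can be replaced by $\cos\bigl((y_1-y_2)\cdot\sum_j\xi_j\bigr)\leq 1$. Since the remaining factors are nonnegative, the integral is maximized at $y_1=y_2$, yielding $\mbf E[\al^\eps_1(Z_1^{y_1},Z_2^{y_2})^m]\leq\mbf E[\al^\eps_1(Z_1^{0},Z_2^{0})^m]$.

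\textbf{Step 3: Limit passage and finiteness.} Fatou's lemma applied to the in-probability convergence $\al^\eps_1\to\al_1$ of Theorem \ref{Theorem: Existence of MILT}, combined with Step 2, gives
\[
\mbf E[\al_1(Z_1^{y_1},Z_2^{y_2})^m]\leq\liminf_{\eps\to0}\mbf E[\al^\eps_1(Z_1^{0},Z_2^{0})^m].
\]
To close, I need to verify that this $\liminf$ equals $\mbf E[\al_1(Z_1^{0},Z_2^{0})^m]$ and is finite, which by the Fourier representation above amounts to showing $\int\Psi(\xi,r)\dd\xi\dd r<\infty$. For Brownian motions this is classical and follows from explicit Gaussian computations in \cite{ChenBook}. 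The expected main obstacle lies in the bridge case: the endpoint pinning prevents a direct appeal to the motion-case bounds, and I would handle it by splitting $[0,1]$ into $[0,\tfrac13]$, $[\tfrac13,\tfrac23]$, $[\tfrac23,1]$, using absolute continuity with respect to Brownian motion on the two outer intervals (which reduces those contributions to the motion case) and directly bounding the middle contribution using the uniform boundedness of the one-dimensional density of $B^{0,0}_1(s)$ for $s\in[\tfrac13,\tfrac23]$.
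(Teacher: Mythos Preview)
Your Steps 1 and 2 are correct and constitute a genuinely different route from the paper. The paper works entirely in real space: it derives the explicit moment formula
\[
\mbf E\big[\al_{t}({_1}B^{x_1,x_1}_t,{_2}B^{x_2,x_2}_t)^m\big]=\int_{(\mbb R^2)^m}\msf g^m_{t,[0,t]}(x_1,z)\,\msf g^m_{t,[0,t]}(x_2,z)\d z
\]
with $\msf g^m_{t,A}$ built from bridge transition densities, then obtains the supremum inequality by the shift $z\mapsto z+x_i$ and Cauchy--Schwarz, and the factor $t^m$ by rescaling the heat kernels. Your Fourier argument---nonnegativity of $\Psi$ because characteristic functions of centered Gaussians are real and positive, then $\cos\le 1$---is the natural extension to bridges of Chen's Brownian-motion proof, and it yields the maximization at $y_1=y_2$ in one stroke without writing down $\msf g^m$. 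Both approaches are short; yours is perhaps more conceptual, while the paper's has the advantage that the density formula $\msf g^m$ is already needed in Section~\ref{Section: Existence of MILT} for the construction of the bridge MILT, so the inequality falls out for free.

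Your Step~3 has a soft spot. The identification $\liminf_{\eps}\mbf E[(\al^\eps_1)^m(0,0)]=\mbf E[\al_1^m(0,0)]$ does reduce, via monotone convergence and de~la~Vall\'ee--Poussin, to the finiteness of $\int\Psi$. For bridges, however, your proposed three-interval split is not quite a proof: once you expand $[0,1]^m$ over the three subintervals, the terms with some $r_{j,1}$ in $[0,\tfrac13)$ and others in $(\tfrac23,1]$ are not controlled by the one-dimensional marginal density of $B^{0,0}_1(s)$ on the middle third---you need the full $m$-point joint density. The paper's argument (carried out in the proof of Theorem~\ref{Theorem: Existence of MILT}) is cleaner: split $[0,1]=[0,\tfrac12]\cup[\tfrac12,1]$, use the time-reversal symmetry $\msf g^m_{1,[1/2,1]}(0,\cdot)=\msf g^m_{1,[0,1/2]}(0,\cdot)$ to reduce to the single piece $[0,\tfrac12]^m$, and there bound the pinning factor $p_{1-r_m}(z)/p_1(0)\le 2$ uniformly, which drops you onto the Brownian-motion density $\msf f^m_{[0,1/2]}$ and hence onto the known finite moment $\mbf E[\al_{1/2}({_1}B^0,{_2}B^0)^m]$. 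You could also simply cite the finiteness established in \eqref{Equation: Moments of MILT Integrable}, which is logically prior to this lemma.
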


This result was proved in \cite[Proposition 2.2.6 and (2.2.24)]{ChenBook} in the
case of Brownian motion. Since we did not find the corresponding result for 
Brownian bridges in the literature, we prove it in
Section \ref{Section: MILT Scaling Property}. Lastly, we have the following uniform integrability estimate
for MILTs:

\begin{lemma}
\label{Lemma: MILT is UI}
For every $c>0$, there exists $\theta_c>0$ such that for every $t\in(0,\theta_c)$,
\[\sup_{\eps>0,~x_1,x_2\in\mbb R^2}\mbf E\left[\exp\Big(c\,\al^\eps_t(Z_1,Z_2)\Big)\right]
\leq\sup_{x_1,x_2\in\mbb R^2}\mbf E\left[\exp\Big(c\,\al_t(Z_1,Z_2)\Big)\right]<\infty,\]
where $(Z_1,Z_2)$ is either $({_1}B^{x_1},{_1}B^{x_2})$
or $({_1}B^{x_1,x_1}_t,{_2}B^{x_2,x_2}_t)$.
\end{lemma}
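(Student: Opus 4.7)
The plan is to decompose the claim into two steps: first, the inequality
\[\sup_{\eps > 0,\, x_1, x_2 \in \mbb R^2} \mbf E\big[\exp(c\,\al^\eps_t(Z_1,Z_2))\big] \leq \sup_{x_1, x_2 \in \mbb R^2} \mbf E\big[\exp(c\,\al_t(Z_1,Z_2))\big];\]
and second, the finiteness of this right-hand side for $t$ below some threshold $\th_c > 0$. The first step uses a Jensen-type smoothing argument based on the convolution identity $p_\eps = p_{\eps/2} * p_{\eps/2}$ from \eqref{Equation: General xi eps Covariance}; the second step combines Lemma \ref{Lemma: MILT Scaling Property} with classical moment bounds for planar MILTs.

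For the first step, I would introduce independent centered $\mbb R^2$-valued Gaussians $\eta_1, \eta_2$ with covariance $(\eps/2)I$, jointly independent of $(Z_1, Z_2)$, and let $\tilde Z_i$ denote the process $r \mapsto Z_i(r) + \eta_i$. Conditional on $\eta_i$, the process $\tilde Z_i$ has the same law as ${_i}B^{x_i + \eta_i}$ or ${_i}B^{x_i + \eta_i,\, x_i + \eta_i}_t$ depending on whether $Z_i$ is a motion or a bridge. For every $\eps' > 0$, Fubini together with $p_{\eps'} * p_\eps = p_{\eps + \eps'}$ gives
\[\mbf E_\eta\big[\al^{\eps'}_t(\tilde Z_1, \tilde Z_2)\big] = \al^{\eps + \eps'}_t(Z_1, Z_2).\]
Sending $\eps' \to 0$ and invoking Theorem \ref{Theorem: Existence of MILT}, together with a uniform-integrability argument to pass $\mbf E_\eta$ through the limit, yields the smoothing identity $\al^\eps_t(Z_1, Z_2) = \mbf E_\eta[\al_t(\tilde Z_1, \tilde Z_2)]$. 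Jensen's inequality then upgrades this to
\[\exp\!\big(c\,\al^\eps_t(Z_1, Z_2)\big) \leq \mbf E_\eta\big[\exp(c\,\al_t(\tilde Z_1, \tilde Z_2))\big].\]
Taking $(Z_1, Z_2)$-expectations and absorbing the random shifts of the starting points into a supremum gives $\sup_{x_1, x_2} \mbf E[\exp(c\,\al^\eps_t)] \leq \sup_{y_1, y_2} \mbf E[\exp(c\,\al_t)]$, uniformly in $\eps$.

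For the second step, Lemma \ref{Lemma: MILT Scaling Property} provides the uniform moment bound $\sup_{x_1, x_2} \mbf E[\al_t^m] \leq t^m\, \mbf E[\al_1^m]$, where $\al_1$ denotes the MILT of two independent Brownian motions (resp.\ bridges) started at the origin on $[0,1]$. Classical estimates for the MILT of planar Brownian motions yield $\mbf E[\al_1^m] \leq C^m\, m!$ for some absolute $C > 0$ (see, e.g., \cite[Chapter 2]{ChenBook}), and an essentially identical moment computation gives the analogous bound for bridges (with an appropriate constant $C$). Substituting into the power series for the exponential and interchanging the sum and supremum by positivity,
\[\sup_{x_1, x_2} \mbf E\big[\exp(c\,\al_t)\big] \leq \sum_{m = 0}^\infty \frac{c^m\, \sup_{x_1, x_2} \mbf E[\al_t^m]}{m!} \leq \sum_{m = 0}^\infty (ctC)^m = \frac{1}{1 - ctC} < \infty\]
for every $t < \th_c := 1/(cC)$, which supplies the required threshold.

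The main obstacle I anticipate is the rigorous interchange of $\mbf E_\eta$ with the $\eps' \to 0$ limit in the passage from $\mbf E_\eta[\al^{\eps'}_t(\tilde Z_1, \tilde Z_2)] = \al^{\eps + \eps'}_t(Z_1, Z_2)$ to the smoothing identity for $\al^\eps_t$. Theorem \ref{Theorem: Existence of MILT} supplies convergence in probability of $\al^{\eps'}_t \to \al_t$, but promoting this to $L^1$-convergence under $\mbf E_\eta$ requires a uniform-in-$\eps'$ integrability bound on $\{\al^{\eps'}_t(\tilde Z_1, \tilde Z_2) : \eps' \in (0, 1)\}$. Such a bound follows from a direct second-moment estimate for $\mbf E[(\al^{\eps'}_t)^2]$, which reduces via $p_{\eps'}*p_{\eps'} = p_{2\eps'}$ to an integral of four Gaussian kernels that is controlled uniformly in $\eps'$; this is a minor variant of the computations underlying Theorem \ref{Theorem: Existence of MILT} itself.
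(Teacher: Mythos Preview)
Your proof is correct, and your Step 1 takes a genuinely different route from the paper. The paper establishes the inequality $\sup_\eps \mbf E[(\al^\eps_t)^m] \leq \mbf E[(\al_t)^m]$ directly at the moment level, using the explicit integral formula \eqref{Equation: MILT Pre-Moment} for $\mbf E[X_{\eps_1}\cdots X_{\eps_m}]$ together with Young's convolution inequality (this is \eqref{Equation: MILT UI Precursor}), and then sums the exponential series. Your smoothing identity $\al^\eps_t(Z_1,Z_2)=\mbf E_\eta[\al_t(Z_1+\eta_1,Z_2+\eta_2)]$ followed by Jensen is more conceptual and avoids unpacking the moment formula; the price is the limit interchange you flagged, which is indeed handled by the second-moment bound you sketch (a variant of \eqref{Equation: MILT UI Precursor} itself). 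Either route works; the paper's is more self-contained given the machinery already built in Section \ref{Section: Existence of MILT}, while yours is cleaner if one is willing to invoke that machinery as a black box.

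For Step 2 the two arguments essentially coincide, but your treatment of the bridge case has a small handwave. Lemma \ref{Lemma: MILT Scaling Property} gives $\sup_{x_1,x_2}\mbf E[\al_t^m]\leq t^m\mbf E[\al_1^m]$ with $\al_1$ the bridge MILT at the origin, but it does not supply the factorial growth $\mbf E[\al_1^m]\leq C^m m!$ you then invoke. The paper closes this by the pointwise integral bound \eqref{Equation: Moments of MILT Integrable 4}, which dominates bridge moments by Brownian-motion moments on $[0,t/2]$ (i.e., $\mbf E[\al_t(\text{bridges})^m]\leq 8\,\mbf E[\al_{t/2}(\text{motions})^m]$), and then cites Chen for the motion case. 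Your ``essentially identical moment computation'' for bridges amounts to exactly this comparison, so you should either cite \eqref{Equation: Moments of MILT Integrable 4} explicitly or spell out that step.
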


This result is well-known for Brownian motions (e.g.,
\cite[Theorem 2.2.9 and Equation (2.2.24)]{ChenBook}). See Section \ref{Section: MILT is UI} for a
proof of the result for Brownian bridges.

\subsection{Proof of Lemma \ref{Lemma: Scaling of Approximate SILT}}
\label{Section: Scaling of Approximate SILT}

We only prove the result for $Z=B^{x,x}_t$,
since the argument is exactly the same if $Z=B^x$.
Fix $\eps>0$.
Notice that, by Brownian scaling,
\begin{align}
\label{Equation: Brownian Scaling}
\big\{B_t^{x,x}(r):r\in[0,t]\big\}\deq\big\{x+\sqrt{t}\,B^{0,0}_1(r/t):r\in[0,t]\big\}.
\end{align}
Therefore,
\[
\be^\eps_A(B^{x,x}_t)
=\int_{A}p_\eps\big(B_{t}^{x,x}(r_1)-B_{t}^{x,x}(r_2)\big)\d r
\deq\int_{A}p_\eps\big(\sqrt tB_1^{0,0}(r_1/t)-\sqrt tB_1^{0,0}(r_2/t)\big)\d r.\]
If we define
$s_1=\frac{r_1}{t}$ and $s_2=\frac{r_2}{t}$, then $s\in A/t$
and $\dd r=t^2\dd s$; hence
\[
\be^\eps_A(B^{x,x}_t)
	\deq\int_{A/t}t^2p_\eps\big(\sqrt tB_1^{0,0}(s_1)-\sqrt tB_1^{0,0}(s_2)\big)\d s.
\]
By definition of the Gaussian kernel \eqref{Equation: Gaussian Kernel},
\[t^2p_\eps(\sqrt tx)
=t^2\frac{\mr e^{-|x|^2/2(\eps/t)}}{2\pi\eps}=tp_{\eps/t}(x),\]
hence
\[
\be^\eps_A(B^{x,x}_t)
	\deq t\int_{A/t}p_{\eps/t}\big(B_t^{0,0}(s_1)-B_t^{0,0}(s_2)\big)\d s=t\,\be^{\eps/t}_{A/t}(B^{0,0}_1).
\]

\subsection{Proof of Theorem \ref{Theorem: Existence of MILT}}
\label{Section: Existence of MILT}

Let $t>0$, $x_1,x_2\in\mbb R^2$, and
$A_1,A_2\subset[0,t]$ be fixed.
By \cite[Pages 41 and 42]{ChenBook}, we only need to prove the result for
of $(Z_1,Z_2)=({_1}B^{x_1,x_1}_t,{_2}B^{x_2,x_2}_t)$.
For this, we follow along the steps of the proof of
\cite[Theorem 2.2.3]{ChenBook}.

For sake of readability, for the remainder of this proof we denote
\[X_\eps=\al^\eps_{A_1\times A_2}({_1}B^{x_1,x_1}_t,{_2}B^{x_2,x_2}_t).\]
Given that the $L^2$-space of square-integrable random variables is complete,
and that convergence in $L^2$ implies convergence in probability,
it suffices to show that
\[\lim_{\eps,\tilde\eps\to0}\mbf E\left[(X_\eps-X_{\tilde\eps})^2\right]=0.\]
If we expand the square above, then we notice that it is enough to show that there exists a finite number $\zeta>0$ such that
\[\lim_{\eps,\tilde\eps\to0}\mbf E[X_\eps\,\,X_{\tilde\eps}]=\zeta.\]
Instead of only proving this, we will establish the following much more general result,
since we need the latter in the proofs of other lemmas: For every integer $m\geq1$,
let $\Si_m$ denote the set of permutations of $\{1,2,\ldots,m\}$,
and given $K\subset\mbb[0,\infty)^m$, let us denote $K_<=\{r=(r_1,\ldots,r_m)\in K:r_1<r_2<\cdots<r_m\}.$
Then,
\begin{align}
\label{Equation: Moments of MILT Prelimit}
\lim_{\eps_1,\ldots,\eps_m\to0}\mbf E[X_{\eps_1}\cdots X_{\eps_m}]
=\int_{(\mbb R^2)^m}\msf g^m_{t,A_1}(x_1,z)\msf g^m_{t,A_2}(x_2,z)\d z,
\end{align}
where for any $t>0$, $x\in\mbb R^2$, $m\geq1$ and $z\in(\mbb R^2)^m$, and $A\subset[0,t]$, we define
\begin{align}
\label{Equation: Moments of MILT Function}
\msf g^m_{t,A}(x,z)=\sum_{\si\in\Si_m}\int_{(A^m)_<}\frac{p_{r_1}(z_{\si(1)}-x)\cdot\prod_{j=2}^mp_{r_{j}-r_{j-1}}(z_{\si(j)}-z_{\si(j-1)})\cdot p_{t-r_m}(x-z_{\si(m)})}{p_t(0)}\d r.
\end{align}

Toward this end, we begin by calculating the moments of the form $\mbf E[X_{\eps_1}\cdots X_{\eps_m}]$
for fixed $\eps_j>0$. For this purpose, if we use \eqref{Equation: Alternate MILT} to write the MILT, then
the independence of ${_i}B^{x_i,x_i}_t$ for $i=1,2$ and Tonelli's theorem yields
\begin{align*}
\mbf E[X_{\eps_1}\cdots X_{\eps_m}]
=\int_{(\mbb R^2)^m}\prod_{i=1}^2\int_{A_i^m}\mbf E\left[\prod_{j=1}^mp_{\eps_j/2}\big({_i}B^{x_i,x_i}_t(r_j)-y_j\big)\right]\d r \d y.
\end{align*}
The expectation above is easier to calculate if the $r_j$'s are ordered thanks to the
following explicit calculation:
If $0<r_1<r_2<\cdots<r_m<t$, then for any $z\in(\mbb R^2)^m$,
\begin{align*}
\mbf P\left[B^{x,x}_t(r_1)\in\dd z_1,\ldots,B^{x,x}_t(r_m)\in\dd z_m\right]
=\frac{\mbf P\left[B^x(r_1)\in\dd z_1,\ldots,B^x_t(r_m)\in\dd z_m,B^x(t)\in\dd x\right]}{\mbf P\big[B^x(t)\in\dd x\big]}\\
=\frac{p_{r_1}(z_{1}-x)\cdot\prod_{j=2}^mp_{r_{j}-r_{j-1}}(z_{j}-z_{j-1})\cdot p_{t-r_m}(x-z_m)}{p_t(0)}.
\end{align*}
Toward this end, for any permutation $\si\in\Si_m$
and Borel measurable $K\subset\mbb R^m$, let
\[K_<\circ\si=\big\{r=(r_1,r_2,\ldots,r_m)\in K:r_{\si(1)}<r_{\si(2)}<\cdots<r_{\si(m)}\big\}.\]
Note that we can write any Borel measurable $K$ as
\[K=\left(\bigcup_{\si\in\Si_m}K_<\circ\si\right)\cup L,\]
where the union is disjoint and $L$ has Lebesgue measure zero.
Thus,
\begin{align*}
\mbf E[X_{\eps_1}\cdots X_{\eps_m}]
=\int_{(\mbb R^2)^m}\prod_{i=1}^2\left(\sum_{\si\in\Si_m}\int_{(A_i^p)_<\circ\si}\mbf E\left[\prod_{j=1}^mp_{\eps_{\si(j)}/2}\big({_i}B^{x_i,x_i}_t(r_{\si(j)})-y_{\si(j)}\big)\right]\d r\right)\dd y.
\end{align*}
If we now apply the change of variables $r_j=r_{\si(j)}$, then we get
\begin{align*}
\mbf E[X_{\eps_1}\cdots X_{\eps_m}]
=\int_{(\mbb R^2)^m}\prod_{i=1}^2\left(\sum_{\si\in\Si_m}\int_{(A_i^p)_<}\mbf E\left[\prod_{j=1}^mp_{\eps_{\si(j)}/2}\big({_i}B^{x_i,x_i}_t(r_j)-y_{\si(j)}\big)\right]\d r\right)\dd y.
\end{align*}
With the components $r_j$ ordered, we can now calculate the expectation inside this integral as
\begin{multline*}
\mbf E\left[\prod_{j=1}^mp_{\eps_{\si(j)}/2}\big({_i}B^{x_i,x_i}_t(r_j)-y_{\si(j)}\big)\right]
=\int_{(\mbb R^2)^m}\left(\prod_{j=1}^mp_{\eps_j/2}(z_j-y_j)\right)\\
\cdot\frac{p_{r_1}(z_{\si(1)}-x_i)\cdot\prod_{j=2}^mp_{r_{j}-r_{j-1}}(z_{\si(j)}-z_{\si(j-1)})\cdot p_{t-r_m}(x_i-z_{\si(m)})}{p_t(0)}\d z
\end{multline*}
(note that we have also used $\prod_{j=1}^mp_{\eps_{\sigma(j)}/2}(z_{\sigma(j)}-y_{\sigma(j)})=\prod_{j=1}^mp_{\eps_j/2}(z_j-y_j)$).
Plugging this back into our formula for $\mbf E[X_{\eps_1}\cdots X_{\eps_m}]$, we are led to
\begin{multline*}
\mbf E[X_{\eps_1}\cdots X_{\eps_m}]
=\int_{(\mbb R^2)^m}
\prod_{i=1}^2\Bigg(\sum_{\si\in\Si_m}\int_{(A_i^p)_<}\int_{(\mbb R^2)^m}
\left(\prod_{j=1}^mp_{\eps_j/2}(z_j-y_j)\right)\\
\cdot\frac{p_{r_1}(z_{\si(1)}-x_i)\cdot\prod_{j=2}^mp_{r_{j}-r_{j-1}}(z_{\si(j)}-z_{\si(j-1)})\cdot p_{t-r_m}(x_i-z_{\si(m)})}{p_t(0)}\d z\dd r\Bigg)\dd y.
\end{multline*}
At this point, if we apply Tonelli's theorem to interchange the $\dd z$ integral
with the $\dd r$ integral and the sum over $\Si_m$,
and then combine this with the fact that $p_{\eps_j/2}(z_j-y_j)$ does not depend on either $r$ or $\si$,
then we obtain
\begin{multline*}
\mbf E[X_{\eps_1}\cdots X_{\eps_m}]
=\int_{(\mbb R^2)^m}
\prod_{i=1}^2\int_{(\mbb R^2)^m}
\left(\prod_{j=1}^mp_{\eps_j/2}(z_j-y_j)\right)\Bigg(\sum_{\si\in\Si_m}\int_{(A_i^p)_<}\cdots\\
\frac{p_{r_1}(z_{\si(1)}-x_i)\cdot\prod_{j=2}^mp_{r_{j}-r_{j-1}}(z_{\si(j)}-z_{\si(j-1)})\cdot p_{t-r_m}(x_i-z_{\si(m)})}{p_t(0)}\d r\dd z\Bigg)\dd y.
\end{multline*}
Recalling the definition of $\msf g^m_{t,A}(x,z)$ in \eqref{Equation: Moments of MILT Function},
this can be rewritten as
\begin{align}
\label{Equation: MILT Pre-Moment}
\mbf E[X_{\eps_1}\cdots X_{\eps_m}]=
\int_{(\mbb R^2)^m}\prod_{i=1}^2\left(\int_{(\mbb R^2)^m}\left(\prod_{j=1}^mp_{\eps_j/2}(z_j-y_j)\right)\msf g^m_{t,A_i}(x_i,z)\d z\right)\dd y.
\end{align}
By standard results on the $L^p$ convergence of products of functions convolved with a smooth mollifier
(e.g., \cite[Lemma 2.2.2 and Page 32]{ChenBook}), \eqref{Equation: Moments of MILT Prelimit} will follow if we show that
the function $z\mapsto\msf g^m_{t,A_1}(x_1,z)\msf g^m_{t,A_2}(x_2,z)$ is integrable, i.e.:
\begin{align}
\label{Equation: Moments of MILT Integrable}
\int_{(\mbb R^2)^m}\msf g^m_{t,A_1}(x_1,z)\msf g^m_{t,A_2}(x_2,z)\d z<\infty.
\end{align}

By definition of $\msf g^m_{t,A}(x,z)$ in \eqref{Equation: Moments of MILT Function},
it is clear that $\msf g^m_{t,A}(x,z)\leq\msf g^m_{t,[0,t]}(x,z)$ if $A\subset[0,t]$.
Thus, by H\"older's inequality,
\begin{align}
\label{Equation: Moments of MILT Integrable 1}
\int_{(\mbb R^2)^m}\msf g^m_{t,A_1}(x_1,z)\msf g^m_{t,A_2}(x_2,z)\d z
\leq\left(\prod_{i=1}^2\int_{(\mbb R^2)^m}\msf g^m_{t,[0,t]}(x_i,z)^2\d z\right)^{1/2}.
\end{align}
Then, if we apply the change of variables $(z_1,\ldots,z_m)\mapsto(z_1+x_i,\ldots,z_m+x_i)$ for $i=1,2$ above, it is clear from
\eqref{Equation: Moments of MILT Function} that \eqref{Equation: Moments of MILT Integrable 1} can be reformulated into
\begin{align}
\label{Equation: Moments of MILT Integrable 2}
\int_{(\mbb R^2)^m}\msf g^m_{t,A_1}(x_1,z)\msf g^m_{t,A_2}(x_2,z)\d z
\leq\int_{(\mbb R^2)^m}\msf g^m_{t,[0,t]}(0,z)^2\d z.
\end{align}
Given any two sets $A,\tilde A\subset[0,t]$ whose intersection has Lebesgue measure zero, it is clear from \eqref{Equation: Moments of MILT Function} that
$\msf g^m_{t,A}(0,z)+\msf g^m_{t,\tilde A}(0,z)=\msf g^m_{t,A\cup\tilde A}(0,z)$.
If we apply this with $A=[0,t/2]$ and $\tilde A=[t/2,t]$, then we get
\[\msf g^m_{t,[0,t]}(0,z)^2=\msf g^m_{t,[0,t/2]}(0,z)^2+2\msf g^m_{t,[0,t/2]}(0,z)\msf g^m_{t,[t/2,t]}(0,z)+\msf g^m_{t,[t/2,t]}(0,z)^2.\]
If we apply the change of variables $(r_1,\ldots,r_m)\mapsto(t-r_1,\ldots,t-r_m)$ in \eqref{Equation: Moments of MILT Function},
then we get that $\msf g^m_{t,[t/2,t]}(0,z)=\msf g^m_{t,[0,t/2]}(0,z)$. Thus, we can rewrite \eqref{Equation: Moments of MILT Integrable 2}
as
\begin{align}
\label{Equation: Moments of MILT Integrable 3}
\int_{(\mbb R^2)^m}\msf g^m_{t,A_1}(x_1,z)\msf g^m_{t,A_2}(x_2,z)\d z
\leq4\int_{(\mbb R^2)^m}\msf g^m_{t,[0,t/2]}(0,z)^2\d z.
\end{align}
If $0\leq r_m\leq t/2$, then
\[\sup_{z\in\mbb R^2}\frac{p_{t-r_m}(z)}{p_t(0)}\leq\frac{2\pi t}{2\pi(t/2)}=2.\]
If we apply this to \eqref{Equation: Moments of MILT Function},
then we get from \eqref{Equation: Moments of MILT Integrable 3} that
\begin{align}
\label{Equation: Moments of MILT Integrable 4}
\int_{(\mbb R^2)^m}\msf g^m_{t,A_1}(x_1,z)\msf g^m_{t,A_2}(x_2,z)\d z
\leq8\int_{(\mbb R^2)^m}\msf f^m_{[0,t/2]}(z)^2\d z.
\end{align}
where for any $m\geq1$, $z\in(\mbb R^2)^m$, and bounded Borel set $A\subset[0,\infty)$, we define
\begin{align}
\label{Equation: Moments of MILT Function for BM}
\msf f^m_{A}(z)=\sum_{\si\in\Si_m}\int_{(A^m)_<}p_{r_1}(z_{\si(1)})\cdot\prod_{j=2}^mp_{r_{j}-r_{j-1}}(z_{\si(j)}-z_{\si(j-1)})\d r.
\end{align}
With this inequality in hand, we can now prove \eqref{Equation: Moments of MILT Integrable}---hence
conclude the proof of Theorem \ref{Theorem: Existence of MILT}---by
noting the following: According to \cite[Theorem 2.2.3]{ChenBook},
for any bounded Borel sets $A_1,A_2\subset[0,\infty)$, one has
\begin{align}
\label{Equation: Moments of Brownian Motion MILT}
\mbf E\Big[\al_{A_1\times A_2}\big({_1}B^0,{_2}B^0\big)^m\Big]=\int_{(\mbb R^2)^m}\msf f^m_{A_1}(z)\msf f^m_{A_2}(z)\d z<\infty.
\end{align}

\subsection{Proof of Lemma \ref{Lemma: No Intersection Means no MILT}}
\label{Section: No Intersection Means no MILT}

By Theorem \ref{Theorem: Existence of MILT}, it suffices to show that
\[\lim_{\eps\to0}\al^\eps_t(Z_1,Z_2)\mbf 1_{\mf O_{t,\theta}(Z_1,Z_2)}\qquad\text{almost surely.}\]
Toward this end, for every $\eps>0$,
Definition \ref{Definition: Approximate SILT and MILT} implies that

\begin{multline*}
	\al^\eps_t(Z_1,Z_2)\mbf 1_{\mf O_{t,\theta}(Z_1,Z_2)}
	=
	\int_{[0,t]^2}
	p_\eps\left(Z_1(r_1),Z_2(r_2)\right)
	\mbf 1_{\mf O_{t,\theta}(Z_1,Z_2)}
	\d r\\
	=
	\int_{[0,t]^2}
	\frac{\mr e^{-\left\lVert Z_1(r_1)-Z_2(r_2)\right\rVert^2/2\eps}}{2\pi\eps}
	\mbf 1_{\mf O_{t,\theta}(Z_1,Z_2)}
	\d r
	\leq
	\int_{[0,t]^2}
	\frac{\mr e^{-\theta^2/2\eps}}{2\pi\eps}
	\mbf 1_{\mf O_{t,\theta}(Z_1,Z_2)}
	\d r
	\leq	t^2\frac{\mr e^{-\theta^2/2\eps}}{2\pi\eps}.
\end{multline*}
This vanishes as $\eps\to0$, thus concluding the proof.

\subsection{Proof of Lemma \ref{Lemma: MILT Scaling Property}}
\label{Section: MILT Scaling Property}

Thanks to \cite[Proposition 2.2.6 and (2.2.24)]{ChenBook}, we only need to prove the result for Brownian bridges.
The main step of the proof is to establish the identity
\begin{align}
\label{Equation: Moments of MILT}
\mbf E\big[\al_{A_1\times A_2}({_1}B^{x_1,x_1}_t,{_2}B^{x_2,x_2}_t)^m\big]
=\int_{(\mbb R^2)^m}\msf g^m_{t,A_1}(x_1,z)\msf g^m_{t,A_2}(x_2,z)\d z,
\end{align}
where we recall the definition of $\msf g^m_{t,A}(x,z)$ in \eqref{Equation: Moments of MILT Function}.
Indeed, once this is done, then we get that
\[\mbf E\big[\al_t({_1}B^{x_1,x_1}_t,{_2}B^{x_2,x_2}_t)^m\big]
\leq
\mbf E\big[\al_t({_1}B^{0,0}_t,{_2}B^{0,0}_t)^m\big]\]
from \eqref{Equation: Moments of MILT Integrable 2} (with $A_1=A_2=[0,t]$). Next, we get
\[\mbf E\big[\al_t({_1}B^{0,0}_t,{_2}B^{0,0}_t)^m\big]=t^m\mbf E\big[\al_t({_1}B^{0,0}_1,{_2}B^{0,0}_1)^m\big]<\infty\]
by combining \eqref{Equation: Moments of MILT Integrable} with the scaling property
$p_{st}(z)=t^{-1}p_s(z/t^{1/2})$ and changes of variables in the $\dd r$ integral in \eqref{Equation: Moments of MILT Function}
and the $\dd z$ integral in \eqref{Equation: Moments of MILT}.

It now only remains to prove \eqref{Equation: Moments of MILT}.
We know from \eqref{Equation: Moments of MILT Prelimit} that
\begin{align}
\lim_{\eps\to0}\mbf E\big[\al^\eps_{A_1\times A_2}({_1}B^{x_1,x_1}_t,{_2}B^{x_2,x_2}_t)^m\big]
=\int_{(\mbb R^2)^m}\msf g^m_{t,A_1}(x_1,z)\msf g^m_{t,A_2}(x_2,z)\d z.
\end{align}
If we combine this with Theorem \ref{Theorem: Existence of MILT}, then by
the Vitali convergence theorem, it suffices to prove
that the sequence
$\big\{\al^\eps_{A_1\times A_2}({_1}B^{x_1,x_1}_t,{_2}B^{x_2,x_2}_t)^m:\eps>0\big\}$
is uniformly integrable. For this purpose,
we recall a special case of the well-known de-la-Vall\'ee-Poussin criterion for uniform integrability (u.i.):
\begin{align}
\label{Equation: Poussin}
\text{If }\sup_{\theta\in\Theta}\mbf E\big[|X_\theta|^q\big]<\infty\text{ for some $q>1$, then }\{X_\theta:\theta\in\Theta\}\text{ is u.i.}
\end{align}
Thus, it is enough to show that for every $m\geq1$,
\begin{align*}
\sup_{\eps>0}\mbf E\big[\al^\eps_{A_1\times A_2}({_1}B^{x_1,x_1}_t,{_2}B^{x_2,x_2}_t)^m\big]<\infty.
\end{align*}
To see this, if we use \eqref{Equation: MILT Pre-Moment} (with $\eps_j=\eps$ for $j=1,\ldots,m$),
together with an application of Tonelli's theorem to expand the product over $i$ as two distinct
integrals over $z_j$ and $\tilde z_j$, then we get
\begin{multline*}
\mbf E\big[\al^\eps_{A_1\times A_2}({_1}B^{x_1,x_1}_t,{_2}B^{x_2,x_2}_t)^m\big]
=\int_{((\mbb R^2)^m)^2}\msf g^m_{t,A_1}(x_1,z)\msf g^m_{t,A_2}(x_2,\tilde z)\\
\cdot\left(\int_{(\mbb R^2)^m}\prod_{j=1}^mp_{\eps/2}(z_j-y_j)p_{\eps/2}(\tilde z_j-y_j)\d y\right)\dd z\dd\tilde z.
\end{multline*}
If we use the semigroup property of the Gaussian kernel, then this becomes
\[\mbf E\big[\al^\eps_{A_1\times A_2}({_1}B^{x_1,x_1}_t,{_2}B^{x_2,x_2}_t)^m\big]
=\int_{((\mbb R^2)^m)^2}\msf g^m_{t,A_1}(x_1,z)\msf g^m_{t,A_2}(x_2,\tilde z)\prod_{j=1}^mp_{\eps}(z_j-\tilde z_j)\d z\dd\tilde z.\]
By Young's convolution inequality and the fact that $p_\eps$ is a probability density,
we then get that
\begin{multline}
\label{Equation: MILT UI Precursor}
\sup_{\eps>0}\mbf E\big[\al^\eps_{A_1\times A_2}({_1}B^{x_1,x_1}_t,{_2}B^{x_2,x_2}_t)^m\big]\\
\leq\left(\int_{(\mbb R^2)^m}\msf g^m_{t,A_1}(x_1,z)^2\d z\right)^{1/2}\left(\int_{(\mbb R^2)^m}\msf g^m_{t,A_2}(x_2,z)^2\d z\right)^{1/2};
\end{multline}
this upper bound is finite by \eqref{Equation: Moments of MILT Integrable}, thus concluding the proof.

\subsection{Proof of Lemma \ref{Lemma: MILT is UI}}
\label{Section: MILT is UI}

By \cite[Theorem 2.2.9 and (2.2.24)]{ChenBook}, we only need to prove the result for Brownian bridges.
If we combine an application of \eqref{Equation: MILT UI Precursor} (assuming that $A_1=A_2=[0,t]$)
with an application of the same change of variables used in \eqref{Equation: Moments of MILT Integrable 1} and
\eqref{Equation: Moments of MILT Integrable 2} (which allows to set $x_1=x_2=0$),
then we get
\begin{multline*}
\sup_{\eps>0,~x_1,x_2\in\mbb R^2}\mbf E\big[\al^\eps_{t}({_1}B^{x_1,x_1}_t,{_2}B^{x_2,x_2}_t)^m\big]\\
\leq\sup_{x_1,x_2\in\mbb R^2}\left(\int_{(\mbb R^2)^m}\msf g^m_{t,[0,t]}(x_1,z)^2\d z\right)^{1/2}\left(\int_{(\mbb R^2)^m}\msf g^m_{t,[0,t]}(x_2,z)^2\d z\right)^{1/2}\\
=\int_{(\mbb R^2)^m}\msf g^m_{t,[0,t]}(0,z)^2\d z
=\mbf E\big[\al_{t}({_1}B^{0,0}_t,{_2}B^{0,0}_t)^m\big].
\end{multline*}
Since this holds for all integers $m\geq1$, using the moment generating function of the MILT
yields
\[\sup_{\eps>0,~x_1,x_2\in\mbb R^2}\mbf E\left[\exp\Big(c\,\al^\eps_t({_1}B^{x_1,x_1}_t,{_2}B^{x_2,x_2}_t)\Big)\right]
\leq\mbf E\left[\exp\Big(c\,\al_t({_1}B^{0,0}_t,{_2}B^{0,0}_t)\Big)\right]\]
whenever the expectation on the right-hand side of the
above inequality is finite.
This is the case for small enough $t>0$ thanks to
\eqref{Equation: Moments of MILT Integrable 4},
\eqref{Equation: Moments of Brownian Motion MILT},
and the statement of Lemma \ref{Lemma: MILT is UI} in the case of Brownian motions.
In order to complete the proof of Lemma \ref{Lemma: MILT is UI}, it only remains to show that
\[\sup_{x_1,x_2\in\mbb R^2}\mbf E\left[\exp\Big(c\,\al_t({_1}B^{x_1,x_1}_t,{_2}B^{x_2,x_2}_t)\Big)\right]=\mbf E\left[\exp\Big(c\,\al_t({_1}B^{0,0}_t,{_2}B^{0,0}_t)\Big)\right].\]
Clearly the supremum on the left-hand side is greater or equal than the
expectation on the right-hand side (the latter of which consists of choosing $x_1=x_2=0$); the reverse inequality follows
from \eqref{Equation: Moments of MILT Integrable 2} (with $A_1=A_2=[0,t]$) and \eqref{Equation: Moments of MILT}.

\subsection{Proof of \eqref{Equation: Uniform Integrability of SILT - Non-Renormalized}}
\label{Section: Uniform Integrability of SILT - Non-Renormalized}

By \cite[Proposition 2.3.4]{ChenBook}, we note that
\[\be^{\eps}_{[0,t/2]\times[t/2,t]}(B^x)\deq\al^\eps_{[0,t/2]^2}({_1}B^0,{_2}B^0).\]
Thus, \eqref{Equation: Uniform Integrability of SILT - Non-Renormalized}
in the case where $Z=B^x$ follows from Lemma \ref{Lemma: MILT is UI}.
We now consider the case of Brownian bridge. By Lemma
\ref{Lemma: Scaling of Approximate SILT},
\[\be^{\eps}_{[0,t/2]\times[t/2,t]}(B^{x,x}_t)\deq t\,\be^{\eps/t}_{[0,1/2]\times[1/2,1]}(B^{0,0}_1),\]
and so it suffices to show that there exists $\theta_c$ such that for every $t\in(0,\theta_c)$, one has
\[\sup_{\eps>0}\mbf E\left[\exp\Big(ct\be^\eps_{[0,1/2]\times[1/2,1]}(B^{0,0}_1)\Big)\right]<\infty.\]
Since $A\mapsto\be^\eps_A(Z)$ is a measure
and $\be^\eps_A(Z)=0$ when $A$ has Lebesgue measure zero (both of which
clearly follow from Definition \ref{Definition: Approximate SILT and MILT}),
for any $\de\in(0,1/4)$, we can write $\be^\eps_{[0,1/2]\times[1/2,1]}(B^{0,0}_1)$ as the sum
\[\be^\eps_{[0,1/2]\times[1/2,1-\de]}(B^{0,0}_1)
+\be^\eps_{[0,\de]\times[1-\de,1]}(B^{0,0}_1)
+\be^\eps_{[\de,1/2]\times[1-\de,1]}(B^{0,0}_1).\]
Thus, by H\"older's inequality, it suffices to show that
there exists $\theta_c>0$ such that for every $t\in(0,\theta_c)$,
\begin{align}
\label{Equation: Uniform Integrability of SILT - Non-Renormalized I}
\sup_{\eps>0}\mbf E\Big[\exp\Big(3ct\be^\eps_{[0,\de]\times[1-\de,1]}(B^{0,0}_1)\Big)\Big]<\infty,
\end{align}
\begin{align}
\label{Equation: Uniform Integrability of SILT - Non-Renormalized II}
\sup_{\eps>0}\mbf E\Big[\exp\Big(3ct\be^\eps_{[\de,1/2]\times[1-\de,1]}(B^{0,0}_1)\Big)\Big]<\infty,
\end{align}
and
\begin{align}
\label{Equation: Uniform Integrability of SILT - Non-Renormalized III}
\sup_{\eps>0}\mbf E\Big[\exp\Big(3ct\be^\eps_{[0,1/2]\times[1/2,1-\de]}(B^{0,0}_1)\Big)\Big]<\infty.
\end{align}
In the proof of \cite[Theorem 3.7 (iii)]{Matsuda}, it is shown that
\eqref{Equation: Uniform Integrability of SILT - Non-Renormalized I}
and \eqref{Equation: Uniform Integrability of SILT - Non-Renormalized II}
holds 
(more specifically, $[0,\de]\times[1-\de,1]$ is the set denoted by $A_4$ in \cite{Matsuda},
and $[\de,1/2]\times[1-\de,1]$ is contained in the set denoted by $A_3$ in \cite{Matsuda}).
As for \eqref{Equation: Uniform Integrability of SILT - Non-Renormalized III}, if we let $\mbb P_\de$ denote
the law of the Brownian motion $\big(B^0(s):s\in[0,1-\de]\big)$, then under the tilted measure
\[\dd\tilde{\mbb P}_\de=\tfrac{1}{\de}\exp\left(-\tfrac{|B(1-\de)|^2}{2\de}\right)\d\mbb P_\de,\]
that process has the law $\big(B^{0,0}_1(s):s\in[0,1-\de]\big)$
(see, e.g., \cite[Lemma 3.1]{Nakao} for the details). Consequently,
\[\sup_{\eps>0}\mbf E\Big[\exp\Big(3ct\be^\eps_{[0,1/2]\times[1/2,1-\de]}(B^{0,0}_1)\Big)\Big]
\leq
\tfrac{1}{\de}\sup_{\eps>0}\mbf E\Big[\exp\Big(3ct\be^\eps_{[0,1/2]\times[1/2,1-\de]}(B^0)\Big)\Big].\]
We therefore obtain \eqref{Equation: Uniform Integrability of SILT - Non-Renormalized III}
by the statement of \eqref{Equation: Uniform Integrability of SILT - Non-Renormalized} for Brownian motions.

\section{Theorem \ref{Theorem: Main} Step 2: Feynman-Kac Formulas}
\label{Section: Step 2}

\subsection{Outline}

In this section, we provide the main tool that is used to calculate the expectation and variance asymptotics
in Theorem \ref{Theorem: Main}, namely:

\begin{proposition}
\label{Proposition: Feynman-Kac for T and M}
For every $\ka>0$,
there exists a constant $\th_\ka>0$ such that
for every $t\in(0,\th_\ka)$, the following holds:
\begin{align}
\label{Equation: Feynman-Kac for T and M 1}
\lim_{\eps\to0}\mbf E\big[\msf T_{\ka,\eps}(t)\mr e^{-t\msf c_{\ka,\eps}}\big]
=\frac{\mr e^{\ka^2t\log t/2\pi}}{2\pi t}\int_D\mbf E\Big[\mbf 1_{\{\tau_D(B^{x,x}_t)>t\}}\mr e^{\ka^2\ga_t(B^{x,x}_t)}\Big]\d x;
\end{align}
\begin{align}
\label{Equation: Feynman-Kac for T and M 2}
\lim_{\eps\to0}\mbf E\big[\msf M_{\ka,\eps}(t)\mr e^{-t\msf c_{\ka,\eps}}\big]
=\mr e^{\ka^2(t\log t-t)/2\pi}\int_D\mbf E\Big[\mbf 1_{\{\tau_D(B^x)>t\}}\mr e^{\ka^2\ga_t(B^x)}\Big]\d x;
\end{align}
\begin{multline}
\label{Equation: Feynman-Kac for T and M 3}
\lim_{\eps\to0}\mbf{Var}\big[\msf T_{\ka,\eps}(t)\mr e^{-t\msf c_{\ka,\eps}}\big]=\frac{\mr e^{\ka^2t\log t/\pi}}{(2\pi t)^2}\int_{D^2}\mbf E\Bigg[\mbf 1_{\cap_{i\leq2}\{\tau_D({_i}B^{x_i,x_i}_t)>t\}}\\
\cdot\mr e^{\ka^2\sum_{i=1}^2\ga_t({_i}B^{x_i,x_i}_t)}\left(\mr e^{\ka^2\al_t({_1}B^{x_1,x_1}_t,{_2}B^{x_2,x_2}_t)}-1\right)\Bigg]\d x;
\end{multline}
\begin{multline}
\label{Equation: Feynman-Kac for T and M 4}
\lim_{\eps\to0}\mbf{Var}\big[\msf M_{\ka,\eps}(t)\mr e^{-t\msf c_{\ka,\eps}}\big]=\mr e^{\ka^2(t\log t-t)/\pi}\int_{D^2}\mbf E\Bigg[\mbf 1_{\cap_{i\leq2}\{\tau_D({_i}B^{x_i})>t\}}\\
\cdot\mr e^{\ka^2\sum_{i=1}^2\ga_t({_i}B^{x_i})}\left(\mr e^{\ka^2\al_t({_1}B^{x_1},{_2}B^{x_2})}-1\right)\Bigg]\d x.
\end{multline}
\end{proposition}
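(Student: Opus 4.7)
The plan is to compute the expectation and variance of $\msf T_{\ka,\eps}(t)\mr e^{-t\msf c_{\ka,\eps}}$ and $\msf M_{\ka,\eps}(t)\mr e^{-t\msf c_{\ka,\eps}}$ for fixed $\eps>0$ via the classical Feynman--Kac formula, and then take the $\eps\to 0$ limit using the convergence results for SILTs and MILTs collected in Section \ref{Section: SILT and MILT}. The starting point is the identity $\msf T_{\ka,\eps}(t)=\int_D\mc K_{\ka,\eps}(t;x,x)\d x$ (which follows from $\mc K_{\ka,\eps}(t)$ being trace class with continuous kernel), together with the kernel representation \eqref{Equation: F-K} and the analogous Feynman--Kac formula for $\msf M_{\ka,\eps}(t)=\int_Du_{\ka,\eps}(t,x)\d x$.

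Next, we would swap the order of expectations via Fubini, taking the expectation over $\xi_\eps$ first. Conditional on a path $Z$, the random variable $\int_0^t\xi_\eps(Z(r))\d r$ is centred Gaussian, and \eqref{Equation: xi epsilon covariance} identifies its variance as $\iint_{[0,t]^2}p_\eps(Z(r_1)-Z(r_2))\d r=2\be^\eps_t(Z)$. Applying the Gaussian moment generating function therefore turns the Feynman--Kac weight into $\mr e^{\ka^2\be^\eps_t(Z)}$. For the variances, squaring the trace or mass introduces two independent Brownian bridges or motions, and the same conditional Gaussian computation generates the diagonal SILT factors $\mr e^{\ka^2(\be^\eps_t({_1}B)+\be^\eps_t({_2}B))}$ together with an additional cross factor $\mr e^{\ka^2\al^\eps_t({_1}B,{_2}B)}$ coming from the covariance of $\xi_\eps$ along distinct paths; when the square of the first moment is rewritten as a double-path integral using independence of two auxiliary copies of $\xi_\eps$, only the diagonal SILT factors appear, so the difference contributes the factor $\mr e^{\ka^2\al^\eps_t}-1$ claimed in \eqref{Equation: Feynman-Kac for T and M 3}--\eqref{Equation: Feynman-Kac for T and M 4}.

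The renormalisation constant is absorbed by the decomposition $\be^\eps_t=(\be^\eps_t-\mbf E[\be^\eps_t])+\mbf E[\be^\eps_t]$. For the Brownian bridge, Lemma \ref{Lemma: Smoothed Expectation of SILT}-(1) combined with the symmetry $\int_0^t(\log r-\log(t-r))\d r=0$ (apply the change of variables $r\mapsto t-r$) yields $\mbf E[\be^\eps_t(B^{x,x}_t)]=\tfrac{t}{2\pi}\log(1/\eps)+\tfrac{t\log t}{2\pi}+o(1)$, so that $\ka^2\mbf E[\be^\eps_t]-t\msf c_{\ka,\eps}=\tfrac{\ka^2t\log t}{2\pi}+o(1)$, producing the prefactor of \eqref{Equation: Feynman-Kac for T and M 1} (and the squared prefactor of \eqref{Equation: Feynman-Kac for T and M 3} via two independent diagonal contributions in the variance). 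The Brownian motion case picks up an extra $-t$ from the second line of Lemma \ref{Lemma: Smoothed Expectation of SILT}-(1), producing the prefactors of \eqref{Equation: Feynman-Kac for T and M 2} and \eqref{Equation: Feynman-Kac for T and M 4}. The recentred SILT then converges to $\ga_t$ in probability by Theorem \ref{Theorem: Existence of SILT}-(2), and $\al^\eps_t\to\al_t$ in probability by Theorem \ref{Theorem: Existence of MILT}, which identifies the correct pointwise limit of the integrand.

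The hard part will be upgrading this pointwise convergence to convergence of the full expectations, uniformly in $x\in D$ or $(x_1,x_2)\in D^2$. Via H\"older's inequality, this reduces to establishing uniformly bounded high exponential moments of $\ka^2(\be^\eps_t-\mbf E[\be^\eps_t])$ and $\ka^2\al^\eps_t$ over $\eps>0$ and all admissible starting points; these are precisely the content of \eqref{Equation: Exponential Moment SILT} and Lemma \ref{Lemma: MILT is UI}, which only hold when $t$ is smaller than some threshold $\th_\ka>0$ depending on $\ka$---this is the origin of the small-time qualifier in the statement. Uniform integrability then follows from the de-la-Vall\'ee-Poussin criterion \eqref{Equation: Poussin}, and Vitali's convergence theorem closes the argument.
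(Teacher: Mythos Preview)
Your proposal is correct and follows essentially the same approach as the paper's Section~\ref{Section: Feynman-Kac for T and M}: derive the moment formulas via Feynman--Kac and a Gaussian moment-generating-function computation, absorb $\msf c_{\ka,\eps}$ by recentring $\be^\eps_t$, and pass to the limit by Vitali/dominated convergence using uniform exponential moment bounds. One small correction: the uniform-in-$\eps$ SILT bound you need is \eqref{Equation: Uniform Integrability of SILT - Renormalized}, not \eqref{Equation: Exponential Moment SILT} (the latter only controls the limiting object $\ga_t$).
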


Indeed, if we combine this result with Proposition \ref{Proposition: Moments Prelimit},
then we obtain Feynman-Kac formulas for the expectation and variance of $\msf T_\ka(t)$
and $\msf M_\ka(t)$. The remainder of this section is organized as follows:
In Section \ref{Section: Moments Prelimit}, we provide the proof of
Proposition \ref{Proposition: Moments Prelimit}, up to two technical lemmas
(i.e., Lemmas \ref{Lemma: Moments Prelimit UI} and \ref{Lemma: Hilbert-Schmidt Cauchy Sequence}),
which are respectively proved in Sections \ref{Section: Moments Prelimit UI} and
\ref{Section: Hilbert-Schmidt Cauchy Sequence}.
Proposition \ref{Proposition: Feynman-Kac for T and M} is proved in
Section \ref{Section: Feynman-Kac for T and M}.

\subsection{Proof of Proposition \ref{Proposition: Moments Prelimit}}
\label{Section: Moments Prelimit}

The main ingredient in the proof of Proposition \ref{Proposition: Moments Prelimit}
is the following claim: For every $\ka>0$ there exists $\th_\ka>0$ such that
for every $t\in(0,\th_\ka)$, one has
\begin{align}
\label{Equation: Moment Formula 1 Limits in Probability}
\lim_{\eps\to0}\msf T_{\ka,\eps}(t)\mr e^{-t\msf c_{\ka,\eps}}=\msf T_\ka(t)
\quad\text{and}\quad
\lim_{\eps\to0}\msf M_{\ka,\eps}(t)\mr e^{-t\msf c_{\ka,\eps}}=\msf M_\ka(t)
\qquad\text{in probability}.
\end{align}
Indeed, once the limits in \eqref{Equation: Moment Formula 1 Limits in Probability} are proved,
Proposition \ref{Proposition: Moments Prelimit} is a consequence of the
following technical lemma and the Vitali convergence theorem:

\begin{lemma}
\label{Lemma: Moments Prelimit UI}
For every $\ka>0$, there exists some $\th_\ka>0$ such that
for every $m=1,2$ and $t\in(0,\th_\ka)$ the following sequences of random variables are uniformly integrable:
\[\big\{\msf T_{\ka,\eps}(t)^m\mr e^{-mt\msf c_{\ka,\eps}}:\eps>0\big\}
\qquad\text{and}\qquad
\big\{\msf M_{\ka,\eps}(t)^m\mr e^{-mt\msf c_{\ka,\eps}}:\eps>0\big\}.\]
\end{lemma}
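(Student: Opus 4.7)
The plan is to invoke the de la Vallée--Poussin criterion recorded in \eqref{Equation: Poussin}: for uniform integrability of the family $\{\msf T_{\ka,\eps}(t)^m \mr e^{-m t \msf c_{\ka,\eps}} : \eps > 0\}$ it suffices to exhibit some $q > 1$ such that $\sup_{\eps > 0} \mbf E\bigl[\bigl(\msf T_{\ka,\eps}(t)^m \mr e^{-m t \msf c_{\ka,\eps}}\bigr)^q\bigr]$ is finite, and similarly for the mass. Writing $k = mq$, the task reduces to producing a uniform-in-$\eps$ bound on $\mbf E\bigl[\msf T_{\ka,\eps}(t)^k \mr e^{-k t \msf c_{\ka,\eps}}\bigr]$ (and the analogous expression for $\msf M_{\ka,\eps}$) for some real $k > m$.

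I would first address the trace. Expanding the $k$-th power of $\msf T_{\ka,\eps}(t) = \int_D \mc K_{\ka,\eps}(t;x,x)\d x$ using Tonelli, the Feynman--Kac kernel \eqref{Equation: F-K}, and the independence of $k$ copies ${_i}B^{x_i,x_i}_t$ of Brownian bridges, I would perform the conditional Gaussian integration over $\xi$ (using $\mbf E[\xi_\eps(y)\xi_\eps(z)] = p_\eps(y-z)$), which yields inside the bridge expectation the factor
\[\exp\!\left(\ka^2 \sum_{i=1}^{k} \be^\eps_{t}({_i}B^{x_i,x_i}_t) + \ka^2 \sum_{1 \leq i < j \leq k} \al^\eps_{t}({_i}B^{x_i,x_i}_t, {_j}B^{x_j,x_j}_t)\right),\]
in the shorthand of Notations \ref{Notation: SILT Shorthand} and \ref{Notation: MILT Shorthand}. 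Lemma \ref{Lemma: Smoothed Expectation of SILT}-(1) asserts $\ka^2 \mbf E[\be^\eps_t(B^{x,x}_t)] = t \msf c_{\ka,\eps} + \tfrac{\ka^2 t}{2\pi}\log t + o(1)$ as $\eps \to 0$, so the external factor $\mr e^{-k t \msf c_{\ka,\eps}}$ exactly cancels the divergent $\log(1/\eps)$ contribution of $k$ copies of $\ka^2 \mbf E[\be^\eps_t]$, leaving the centered SILTs $\be^\eps_t - \mbf E[\be^\eps_t]$ (whose $\eps \to 0$ limits are the $\ga_t$ of Theorem \ref{Theorem: Existence of SILT}-(2)) together with an $\eps$-uniform $O(t\log t)$ remainder that is harmless for small $t$.

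Next, I would apply Hölder's inequality with a pair of conjugate exponents $p, q > 1$ to decouple the renormalized SILT sum from the MILT sum. Since the $k$ bridges are mutually independent, the SILT factor splits as a product of single-bridge exponential moments of the form $\mbf E\!\bigl[\mr e^{p \ka^2 (\be^\eps_t - \mbf E[\be^\eps_t])({_i}B^{x_i, x_i}_t)}\bigr]$, each of which is bounded uniformly in $\eps$ and $x_i$ by \eqref{Equation: Uniform Integrability of SILT - Renormalized}, provided $t$ is below a threshold determined by $p\ka^2$. A further iteration of Hölder on the MILT factor reduces it to pairwise exponential moments $\mbf E\!\bigl[\mr e^{c\,\al^\eps_t({_i}B^{x_i, x_i}_t, {_j}B^{x_j, x_j}_t)}\bigr]$ with $c = c(k, q, \ka)$, each controlled uniformly in $\eps, x_i, x_j$ by Lemma \ref{Lemma: MILT is UI}, provided $t$ is below a second threshold. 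Taking $\th_\ka$ smaller than the lesser of these thresholds and integrating over $x \in D^k$ (of finite Lebesgue measure) delivers the required uniform bound. The argument for $\msf M_{\ka,\eps}$ is entirely parallel, with the bridges replaced by the Brownian motions ${_i}B^{x_i}$ and the Brownian-motion version of Lemma \ref{Lemma: Smoothed Expectation of SILT}-(1) used in the renormalization step.

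The hard part will be the simultaneous calibration of the Hölder exponents and the value of $\th_\ka$: the coefficients in front of each SILT and MILT in the final exponential moments depend on $p$, $q$, and $k$, and must lie in the range of $c$ for which Lemmas \ref{Lemma: Uniform Integrability of SILT} and \ref{Lemma: MILT is UI} guarantee finiteness. Since those lemmas afford exponential integrability for arbitrary $c > 0$ once $t$ is taken small enough, and since $q$ may be chosen arbitrarily close to $1$ (so that $k$ is only marginally larger than $m$), this calibration is feasible; the price is a shrinking choice of $\th_\ka$ as $\ka$ and $k$ grow.
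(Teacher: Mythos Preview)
Your proposal is correct and follows essentially the same route as the paper: de la Vall\'ee--Poussin via \eqref{Equation: Poussin}, the Feynman--Kac expansion of the $k$-th moment into SILTs and MILTs (recorded in the paper as \eqref{Equation: Averaged F-K Bridge} and \eqref{Equation: Averaged F-K Motion}), then H\"older together with Lemmas \ref{Lemma: Uniform Integrability of SILT}, \ref{Lemma: Smoothed Expectation of SILT}-(1), and \ref{Lemma: MILT is UI}. One small caveat: your $k = mq$ need not be an integer, so the ``$k$-fold product'' expansion is not literal as written; the paper sidesteps this by simply taking the integer exponent $3$, which already handles $m=1,2$.
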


Lemma \ref{Lemma: Moments Prelimit UI} is proved in Section \ref{Section: Moments Prelimit UI} below
by combining the Feynman-Kac formula in \eqref{Equation: F-K} and the properties of intersection
local times stated in Section \ref{Section: SILT and MILT}.

We thus turn our focus to \eqref{Equation: Moment Formula 1 Limits in Probability}.
By \eqref{Equation: H ka eps Spectral Expansion}, for every $\ka,\eps,t>0$,
\[\msf T_{\ka,\eps}(t)\mr e^{-t\msf c_{\ka,\eps}}=\sum_{n=1}^\infty\mr e^{-t(\la_n(H_{\ka,\eps})+\msf c_{\ka,\eps})}\]
and
\[\msf M_{\ka,\eps}(t)\mr e^{-t\msf c_{\ka,\eps}}=\sum_{n=1}^\infty\mr e^{-t(\la_n(H_{\ka,\eps})+\msf c_{\ka,\eps})}\big\langle\psi_n(H_{\ka,\eps}),\mbf 1_D\big\rangle^2.\]
Thus, \eqref{Equation: Moment Formula 1 Limits in Probability} can be reformulated
into the claims that for every $\ka>0$ and $t\in(0,\th_\ka)$,
\begin{align}
\label{Equation: Moment Formula 1 Limits in Probability 2}
&\lim_{\eps\to0}\sum_{n=1}^\infty\mr e^{-t(\la_n(H_{\ka,\eps})+\msf c_{\ka,\eps})}=\sum_{n=1}^\infty\mr e^{-t\la_n(H_\ka)}\qquad\text{in probability},\\
\label{Equation: Moment Formula 1 Limits in Probability 3}
&\lim_{\eps\to0}\sum_{n=1}^\infty\mr e^{-t(\la_n(H_{\ka,\eps})+\msf c_{\ka,\eps})}\big\langle\psi_n(H_{\ka,\eps}),\mbf 1_D\big\rangle^2=\sum_{n=1}^\infty\mr e^{-t\la_n(H_\ka)}\big\langle\psi_n(H_\ka),\mbf 1_D\big\rangle^2\quad\text{in prob.}
\end{align}
Assumption \ref{Assumption: AH} states that for every fixed $n\geq1$,
one has
\[\mr e^{-t(\la_n(H_{\ka,\eps})+\msf c_{\ka,\eps})}\to\mr e^{-t\la_n(H_\ka)},
\quad\mr e^{-t(\la_n(H_{\ka,\eps})+\msf c_{\ka,\eps})}\big\langle\psi_n(H_{\ka,\eps}),\mbf 1_D\big\rangle^2\to\mr e^{-t\la_n(H_\ka)}\big\langle\psi_n(H),\mbf 1_D\big\rangle^2\]
as $\eps\to0$ in probability. The only difficulty in establishing \eqref{Equation: Moment Formula 1 Limits in Probability 2} and \eqref{Equation: Moment Formula 1 Limits in Probability 3}
is therefore to justify the limit when summing over all $n\geq1$. In order to get around this problem, we make use of the following technical result:

\begin{lemma}
\label{Lemma: Hilbert-Schmidt Cauchy Sequence}
Recall the definition of $\mc K_{\ka,\eps}(t)$ in \eqref{Equation: F-K}.
For any
$\ka>0$, there exists $\th_\ka>0$ such that for every $t\in(0,\th_\ka)$,
\[\lim_{\eps,\tilde\eps\to0}\mbf E\left[\big\|\mc K_{\ka,\eps}(t)\mr e^{-t\msf c_{\ka,\eps}}-\mc K_{\ka,\tilde\eps}(t)\mr e^{-t\msf c_{\ka,\tilde\eps}}\big\|_{L^2(D^2)}^2\right]=0.\]
\end{lemma}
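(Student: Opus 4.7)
The plan is to expand the squared $L^2(D^2)$ norm and reduce the Cauchy property to convergence of a single bilinear functional. Writing $f_\eps(x,y) := \mc K_{\ka,\eps}(t;x,y)\,\mr e^{-t\msf c_{\ka,\eps}}$, we have
\[
\mbf E\big[\|f_\eps - f_{\tilde\eps}\|_{L^2(D^2)}^2\big] = J(\eps,\eps) + J(\tilde\eps,\tilde\eps) - 2 J(\eps,\tilde\eps),
\]
with $J(\eps,\tilde\eps) := \int_{D^2}\mbf E[f_\eps(x,y) f_{\tilde\eps}(x,y)]\d x\d y$. It is therefore enough to exhibit a finite deterministic constant $L$ such that $J(\eps,\tilde\eps)\to L$ whenever $\eps,\tilde\eps\to 0$: the limit will be symmetric in its arguments by construction, and the three terms will combine to $2L-2L=0$.

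The first step is to compute the integrand of $J(\eps,\tilde\eps)$ via Feynman--Kac. Introducing two independent Brownian bridges ${_i}B^{x,y}_t$ ($i=1,2$), independent of $\xi$, and applying \eqref{Equation: F-K} to each factor $\mc K_{\ka,\eps_i}(t;x,y)$, I integrate out the centered Gaussian pair $(\xi_\eps,\xi_{\tilde\eps})$ via \eqref{Equation: General xi eps Covariance} and Definition \ref{Definition: Approximate SILT and MILT} to obtain
\[
\mbf E[\mc K_{\ka,\eps}(t;x,y)\mc K_{\ka,\tilde\eps}(t;x,y)] = p_t(x-y)^2\,\mbf E_B\Big[\mbf 1_{\mc E_{x,y}}\,\mr e^{\ka^2 \mc S_{\eps,\tilde\eps}}\Big],
\]
where $\mc E_{x,y} = \{\tau_D({_1}B^{x,y}_t)>t\}\cap\{\tau_D({_2}B^{x,y}_t)>t\}$ and $\mc S_{\eps,\tilde\eps} = \be^\eps_t({_1}B^{x,y}_t) + \be^{\tilde\eps}_t({_2}B^{x,y}_t) + \al^{(\eps+\tilde\eps)/2}_t({_1}B^{x,y}_t,{_2}B^{x,y}_t)$. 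The explicit form \eqref{Equation: Renormalization} of $\msf c_{\ka,\eps}$ makes the prefactor $\mr e^{-t(\msf c_{\ka,\eps}+\msf c_{\ka,\tilde\eps})}$ convert each $\ka^2\be^{\eps_i}_t$ into its centered version $\ka^2\big(\be^{\eps_i}_t - \tfrac{t}{2\pi}\log(1/\eps_i)\big)$.

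The second step is to pass $\eps,\tilde\eps\to 0$. Theorem \ref{Theorem: Existence of SILT}(2) combined with Lemma \ref{Lemma: Smoothed Expectation of SILT}(1) gives $\be^{\eps_i}_t - \tfrac{t}{2\pi}\log(1/\eps_i) \to \ga_t + \msf h(t)$ in probability for a deterministic finite $\msf h(t)$, and Theorem \ref{Theorem: Existence of MILT} gives $\al^{(\eps+\tilde\eps)/2}_t \to \al_t$ in probability. To upgrade these in-probability statements to $L^1$-convergence of the bridge integrand, I would split $\mr e^{\ka^2\mc S_{\eps,\tilde\eps}}$ by H\"older's inequality and dominate each factor by a random variable whose $L^q$-norm ($q>1$) is bounded uniformly in $\eps,\tilde\eps$ and in $(x,y)\in D^2$. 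The required exponential-moment bounds come from \eqref{Equation: Uniform Integrability of SILT - Renormalized}, \eqref{Equation: Uniform Integrability of SILT - Non-Renormalized}, and Lemma \ref{Lemma: MILT is UI}, and they all hold once $t$ is below some threshold $\th_\ka>0$. The de-la-Vall\'ee--Poussin criterion \eqref{Equation: Poussin} then provides uniform integrability of the integrand, and dominated convergence under the integrable weight $p_t(x-y)^2$ on $D^2$ delivers $J(\eps,\tilde\eps)\to L$.

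The hard part will be that Theorem \ref{Theorem: Existence of SILT}, Lemma \ref{Lemma: Smoothed Expectation of SILT}, and the exponential-moment bounds of Section \ref{Section: SILT and MILT} are stated for Brownian motions $B^x$ or Brownian loops $B^{x,x}_t$, whereas the Hilbert--Schmidt calculation naturally produces the non-loop bridges $B^{x,y}_t$ with generically $x\ne y$. I expect this to be resolved via the affine representation $B^{x,y}_t(r) \deq B^{x,x}_t(r) + \tfrac{r}{t}(y-x)$: the increment $B^{x,y}_t(r_1)-B^{x,y}_t(r_2)$ differs from its loop analogue by the deterministic vector $\tfrac{r_1-r_2}{t}(y-x)$, whose Euclidean norm is bounded uniformly on $D^2$ by $\mr{diam}(D)$. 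Inserting this shift into the Gaussian $p_\eps$ produces only bounded multiplicative corrections uniform in $(x,y)$, so the convergence statements and exponential-moment bounds of Section \ref{Section: SILT and MILT} transfer. A careful check is still needed to confirm that the leading log-divergence in $\mbf E[\be^\eps_t(B^{x,y}_t)]$ remains $\tfrac{t}{2\pi}\log(1/\eps)$ (with the $|y-x|$-dependent corrections absorbed into a deterministic function $\msf h(t;x,y)$ bounded over $D^2$); once this is in place, the renormalization $\msf c_{\ka,\eps}$ compensates exactly and dominated convergence concludes the argument.
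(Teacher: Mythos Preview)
Your overall strategy---expand the squared norm and show that the bilinear functional $J(\eps,\tilde\eps)$ converges to a single finite limit $L$---matches the paper's opening move. The divergence, and the gap, is entirely in how you handle the non-loop bridges $B^{x,y}_t$ with $x\neq y$, for which none of the SILT/MILT results of Section~\ref{Section: SILT and MILT} are stated.

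Your proposed fix via the affine representation has a genuine error. You claim that inserting the deterministic shift $c=\tfrac{r_1-r_2}{t}(y-x)$ into $p_\eps$ yields ``only bounded multiplicative corrections uniform in $(x,y)$.'' But
\[
\frac{p_\eps(z+c)}{p_\eps(z)}=\exp\!\Big(-\frac{2z\cdot c+|c|^2}{2\eps}\Big)
\]
depends on the random $z=B^{x,x}_t(r_1)-B^{x,x}_t(r_2)$ and is unbounded as $\eps\to 0$ whenever $c\neq 0$. The exponential-moment bounds \eqref{Equation: Uniform Integrability of SILT - Renormalized}--\eqref{Equation: Uniform Integrability of SILT - Non-Renormalized} and Lemma~\ref{Lemma: MILT is UI} are suprema over $\eps>0$, so a correction that blows up in $\eps$ cannot be used to transfer them. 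Your route is probably salvageable---for instance by the absolute-continuity-on-$[0,t-\de]$ device used in Section~\ref{Section: Uniform Integrability of SILT - Non-Renormalized}, which would let you compare $B^{x,y}_t$ to $B^x$ away from the terminal time and then handle the endpoint separately---but that is substantial additional work, and the mechanism you actually wrote down does not do it.

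The paper bypasses the whole issue with a concatenation trick. The diagonal terms $J(\eps,\eps)$ are rewritten via the semigroup identity $\int_D\mc K_{\ka,\eps}(t;x,y)^2\dd y=\mc K_{\ka,\eps}(2t;x,x)$ as $\mbf E[\msf T_{\ka,\eps}(2t)\mr e^{-2t\msf c_{\ka,\eps}}]$, whose limit is already known from \eqref{Equation: Averaged F-K Bridge Limit}. For the cross term, one of the two bridges is time-reversed into ${_2}B^{y,x}_t$, and then the pair $({_1}B^{x,y}_t,{_2}B^{y,x}_t)$ is glued into a single loop $B^{x,x}_{2t}$ by integrating over the midpoint $y$ (using that, conditional on $B^{x,x}_{2t}(t)=y$, the two halves are independent bridges, and that the midpoint density is $p_t(x-y)^2/p_{2t}(0)$). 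After this, $J(\eps,\tilde\eps)$ is an expectation over the \emph{loop} $B^{x,x}_{2t}$ involving $\be^\eps_{[0,t]^2_\leq}$, $\be^{\tilde\eps}_{[t,2t]^2_\leq}$, and the off-diagonal piece $\be^{(\eps+\tilde\eps)/2}_{[0,t]\times[t,2t]}$---all of which are covered verbatim by Theorem~\ref{Theorem: Existence of SILT}, Lemma~\ref{Lemma: Smoothed Expectation of SILT}, and Lemma~\ref{Lemma: Uniform Integrability of SILT}, with no extension to $x\neq y$ needed.
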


This is proved in Section \ref{Section: Hilbert-Schmidt Cauchy Sequence}, also using the Feynman-Kac formula and the
properties of intersection local times.

Let $\ka>0$ and $t\in(0,\th_\ka)$ be fixed, where $\th_\ka$ is taken as in Lemma \ref{Lemma: Hilbert-Schmidt Cauchy Sequence}.
By combining Assumption \ref{Assumption: AH} with Lemma \ref{Lemma: Hilbert-Schmidt Cauchy Sequence},
every vanishing sequence of $\eps$'s has a subsequence $\eps_1>\eps_2>\cdots>0$ along which
the limits \eqref{Equation: Assumption Convergence of Eigenvalues}
and \eqref{Equation: Assumption Convergence of Eigenfunctions} hold almost surely for all $n\geq1$,
and, in addition,
for every $m,n\geq1$, one has
\begin{align}
\label{Equation: Hilbert-Schmidt Cauchy Sequence Subsequence}
\mbf E\left[\big\|\mc K_{\ka,\eps_m}(t)\mr e^{-t\msf c_{\ka,\eps_m}}-\mc K_{\ka,\eps_n}(t)\mr e^{-t\msf c_{\ka,\eps_n}}\big\|_{L^2(D^2)}^2\right]\leq8^{-\min\{m,n\}}.
\end{align}
Our aim is to show that the limits
\eqref{Equation: Moment Formula 1 Limits in Probability 2} and \eqref{Equation: Moment Formula 1 Limits in Probability 3}
hold almost surely along any such subsequence $\{\eps_n:n\geq1\}$, which will imply the desired convergence in probability.

Combining \eqref{Equation: Hilbert-Schmidt Cauchy Sequence Subsequence} with Markov's inequality, we get that
\[\mbf P\left[\big\|\mc K_{\ka,\eps_n}(t)\mr e^{-t\msf c_{\ka,\eps_n}}-\mc K_{\ka,\eps_{n+1}}(t)\mr e^{-t\msf c_{\ka,\eps_{n+1}}}\big\|_{L^2(D^2)}>2^{-n}\right]\leq\frac{8^{-n}}{4^{-n}}\leq 2^{-n}.\]
In particular, since $\sum_{n=1}^\infty2^{-n}<\infty$, it follows from the Borel-Cantelli lemma that
\[\mbf P\left[\sum_{n=1}^\infty\big\|\mc K_{\ka,\eps_n}(t)\mr e^{-t\msf c_{\ka,\eps_n}}-\mc K_{\ka,\eps_{n+1}}(t)\mr e^{-t\msf c_{\ka,\eps_{n+1}}}\big\|_{L^2(D^2)}<\infty\right]=1;\]
hence the sequence $\big\{\mc K_{\ka,\eps_n}(t):n\geq1\big\}$ is almost-surely Cauchy in $L^2(D^2)$.
Given that $L^2(D^2)$ is complete, we conclude that there exists some Hilbert-Schmidt integral operator $\mc K_\ka(t)\in L^2(D^2)$ such that
\begin{align}
\label{Equation: Hilbert-Schmidt Cauchy AS}
\lim_{n\to\infty}\big\|\mc K_{\ka,\eps_n}(t)\mr e^{-t\msf c_{\ka,\eps_n}}-\mc K_\ka(t)\big\|_{L^2(D^2)}^2=0\qquad\text{almost surely}.
\end{align}

Let us henceforth assume that we are working with a realization of $\xi$
in the probability-one event where \eqref{Equation: Assumption Convergence of Eigenvalues},
\eqref{Equation: Assumption Convergence of Eigenfunctions}, and \eqref{Equation: Hilbert-Schmidt Cauchy AS}
hold. We claim that, on this event,
\begin{align}
\label{Equation: K(t) Spectral Expansion}
\mc K_\ka(t)f=\mr e^{-t H_\ka}f=\sum_{n=1}^\infty\mr e^{-t\la_n(H_\ka)}\big\langle\psi_n(H_\ka),f\big\rangle\psi_n(H_\ka),
\qquad f\in L^2(D).
\end{align}
For this purpose: On the one hand, for every $m,n\geq1$, we have that
\begin{align*}
&\|\mc K_{\ka,\eps_m}(t)\mr e^{-t\msf c_{\ka,\eps_m}}\psi_n(H_{\ka,\eps_m})-\mc K_\ka(t)\psi_n(H_\ka)\|\\
&\leq\|\mc K_{\ka,\eps_m}(t)\mr e^{-t\msf c_{\ka,\eps_m}}\psi_n(H_{\ka,\eps_m})-\mc K_{\ka,\eps_m}(t)\mr e^{-t\msf c_{\ka,\eps_m}}\psi_n(H_\ka)\|\\
&\hspace{2in}+\|\mc K_{\ka,\eps_m}(t)\mr e^{-t\msf c_{\ka,\eps_m}}\psi_n(H_\ka)-\mc K_\ka(t)\psi_n(H_\ka)\|\\
&\leq\|\mc K_{\ka,\eps_m}(t)\mr e^{-t\msf c_{\ka,\eps_m}}\|_{L^2(D^2)}\|\psi_n(H_{\ka,\eps_m})-\psi_n(H_\ka)\|
+\|\mc K_{\ka,\eps_m}(t)\mr e^{-t\msf c_{\ka,\eps_m}}-\mc K_\ka(t)\|_{L^2(D^2)};
\end{align*}
\eqref{Equation: Assumption Convergence of Eigenfunctions} and \eqref{Equation: Hilbert-Schmidt Cauchy AS}
implies that this goes to zero as $m\to\infty$. On the other hand,
\eqref{Equation: H ka eps Spectral Expansion}, \eqref{Equation: Assumption Convergence of Eigenvalues}, and
\eqref{Equation: Assumption Convergence of Eigenfunctions} implies the $L^2$ limit
\[\mc K_{\ka,\eps_m}(t)\mr e^{-t\msf c_{\ka,\eps_m}}\psi_n(H_{\ka,\eps_m})=\mr e^{-t(\la_n(H_{\ka,\eps_m})+\msf c_{\ka,\eps_m})}\psi_n(H_{\ka,\eps_m})\to\mr e^{-t\la_n(H_\ka)}\psi_n(H_\ka)\]
as $m\to\infty$.
If we combine these two results, then we get that
\[\mc K_\ka(t)\psi_n(H_\ka)=\mr e^{-t\la_n(H_\ka)}\psi_n(H_\ka)\]
for all $n\geq1$; hence \eqref{Equation: K(t) Spectral Expansion} holds since $\psi_n(H_\ka)$ is an orthonormal basis.

With \eqref{Equation: K(t) Spectral Expansion} in hand, we are now in a position
to prove the almost-sure versions of
\eqref{Equation: Moment Formula 1 Limits in Probability 2} and \eqref{Equation: Moment Formula 1 Limits in Probability 3}
along the subsequence $\eps_n$, and thus conclude the proof of Proposition \ref{Proposition: Moments Prelimit}:
On the one hand, \eqref{Equation: H ka eps Spectral Expansion},
\eqref{Equation: Hilbert-Schmidt Cauchy AS} and \eqref{Equation: K(t) Spectral Expansion} imply that
\begin{multline*}
\lim_{m\to\infty}\sum_{n=1}^\infty\mr e^{-t(\la_n(H_{\ka,\eps_m})+\msf c_{\ka,\eps_m})}
=\lim_{m\to\infty}\|\mc K_{\ka,\eps_m}(t/2)\mr e^{-(t/2)\msf c_{\ka,\eps_m}}\|_{L^2(D^2)}^2\\
=\|\mc K_{\ka}(t/2)\|_{L^2(D^2)}^2
=\sum_{n=1}^\infty\mr e^{-t\la_n(H_\ka)},
\end{multline*}
which yields \eqref{Equation: Moment Formula 1 Limits in Probability 2}.
On the other hand, the same three results and the Cauchy-Schwarz
inequality imply that
\begin{align*}
&\lim_{m\to\infty}\left|\sum_{n=1}^\infty\mr e^{-t(\la_n(H_{\ka,\eps_m})+\msf c_{\ka,\eps_m})}\big\langle\psi_n(H_{\ka,\eps_m}),\mbf 1_D\big\rangle^2-\sum_{n=1}^\infty\mr e^{-t\la_n(H_\ka)}\big\langle\psi_n(H_\ka),\mbf 1_D\big\rangle^2\right|\\
&=\lim_{m\to\infty}\left|\big\langle\mc K_{\ka,\eps_m}(t)\mbf 1_D-\mc K_\ka(t)\mbf 1_D,\mbf 1_D\big\rangle\right|\\
&\leq\lim_{m\to\infty}\|\mc K_{\ka,\eps_m}(t)\mbf 1_D-\mc K_\ka(t)\mbf 1_D\|\|\mbf 1_D\|\\
&\leq\lim_{m\to\infty}\|\mc K_{\ka,\eps_m}(t)-\mc K_\ka(t)\|_{L^2(D^2)}\|\mbf 1_D\|^2=0,
\end{align*}
which yields \eqref{Equation: Moment Formula 1 Limits in Probability 3}.

\subsection{Proof of Lemma \ref{Lemma: Moments Prelimit UI}}
\label{Section: Moments Prelimit UI}

By \eqref{Equation: Poussin},
it suffices to show that for every integer $m\geq1$
(for the purposes of Lemma \ref{Lemma: Moments Prelimit UI}, $m=3$ suffices), there exists some $\th_{\ka,m}>0$ such that for all $t\in(0,\th_{\ka,m})$, one has
\begin{align}
\label{Equation: DVP Moments Prelimit UI}
\sup_{\eps>0}\mbf E\left[\msf T_{\ka,\eps}(t)^m\mr e^{-mt\msf c_{\ka,\eps}}\right]<\infty
\qquad\text{and}\qquad
\sup_{\eps>0}\mbf E\left[\msf M_{\ka,\eps}(t)^m\mr e^{-mt\msf c_{\ka,\eps}}\right]<\infty.
\end{align}

Our first step in proving \eqref{Equation: DVP Moments Prelimit UI} is to provide Feynman-Kac
formulas for the moments appearing therein---this will be used repeatedly in future results in the paper
as well. Toward this end,
by combining \eqref{Equation: F-K} and \eqref{Equation: H ka eps Spectral Expansion}, we get that
\begin{multline}
\label{Equation: UnAveraged F-K Bridge}
\msf T_{\ka,\eps}(t)\mr e^{-t\msf c_{\ka,\eps}}
=\mr{Tr}\big[\mr e^{-tH_{\ka,\eps}}\big]
=\int_D\mc K_{\ka,\eps}(t;x,x)\d x\\
=\frac{1}{2\pi t}\int_D\mbf E_B\left[\mbf 1_{\{\tau_D(B^{x,x}_t)>t\}}\exp\left(-\ka\int_0^t\xi_\eps\big(B^{x,x}_t(r)\big)\d r-t\msf c_{\ka,\eps}\right)\right]\d x,
\end{multline}
and similarly
\begin{multline}
\label{Equation: UnAveraged F-K Motion}
\msf M_{\ka,\eps}(t)\mr e^{-t\msf c_{\ka,\eps}}
=\int_D\mr e^{-tH_{\ka,\eps}}\mbf 1_D(x)\d x
=\int_{D^2}\mc K_{\ka,\eps}(t;x,y)\d x\dd y\\
=\int_D\mbf E_B\left[\mbf 1_{\{\tau_D(B^x)>t\}}\exp\left(-\ka\int_0^t\xi_\eps\big(B^x(r)\big)\d r-t\msf c_{\ka,\eps}\right)\right]\d x.
\end{multline}
If we apply Tonelli's theorem to \eqref{Equation: UnAveraged F-K Bridge}, then for any integer power $m\geq1$, we have
\begin{multline}
\label{Equation: UnAveraged F-K Bridge 2}
\msf T_{\ka,\eps}(t)^m\mr e^{-mt\msf c_{\ka,\eps}}\\
=\frac{1}{(2\pi t)^m}\int_{D^m}\prod_{i=1}^m\mbf E_B\left[\mbf 1_{\{\tau_D(B^{x_i,x_i}_t)>t\}}\exp\left(-\ka\int_0^t\xi_\eps\big(B^{x_i,x_i}_t(r)\big)\d r-t\msf c_{\ka,\eps}\right)\right]\d x.
\end{multline}
Note that for any functional $F$,
\begin{align}
\label{Equation: Product Means Independent}
\prod_{i=1}^m\mbf E\big[F(B^{x_i,x_i}_t)\big]=\mbf E\left[\prod_{i=1}^mF({_i}B^{x_i,x_i}_t)\right],
\end{align}
where we recall our notation for independent Brownian motions/bridges in Definition \ref{Definition: Brownian}.
Therefore, \eqref{Equation: UnAveraged F-K Bridge 2} can be written as
\begin{multline}
\label{Equation: UnAveraged F-K Bridge 3}
\msf T_{\ka,\eps}(t)^m\mr e^{-mt\msf c_{\ka,\eps}}\\
=\frac{1}{(2\pi t)^m}\int_{D^m}\mbf E_B\left[\mbf 1_{\cap_{i\leq m}\{\tau_D({_i}B^{x_i,x_i}_t)>t\}}\exp\left(\sum_{i=1}^m\left\{-\ka\int_0^t\xi_\eps\big({_i}B^{x_i,x_i}_t(r)\big)\d r-t\msf c_{\ka,\eps}\right\}\right)\right]\d x,
\end{multline}
where we assume that, in addition of being independent of each other, the ${_i}B^{x_i,x_i}_t$'s are independent of $\xi$.
If we apply Tonelli's theorem to \eqref{Equation: UnAveraged F-K Bridge 3}, we then get
\begin{multline}
\label{Equation: UnAveraged F-K Bridge 4}
\mbf E\big[\msf T_{\ka,\eps}(t)^m\mr e^{-mt\msf c_{\ka,\eps}}\big]\\
=\frac{1}{(2\pi t)^m}\int_{D^m}\mbf E_B\left[\mbf 1_{\cap_{i\leq m}\{\tau_D({_i}B^{x_i,x_i}_t)>t\}}\mbf E_{\xi}\left[\exp\left(\sum_{i=1}^m\left\{-\ka\int_0^t\xi_\eps\big({_i}B^{x_i,x_i}_t(r)\big)\d r-t\msf c_{\ka,\eps}\right\}\right)\right]\right]\d x,
\end{multline}
where $\mbf E_{\xi}$ denotes the expectation with respect to $\xi$ only, conditional on the ${_i}B^{x_i,x_i}_t$'s.
Since $\xi_\eps$ is a Gaussian process, conditional on the ${_i}B^{x_i,x_i}_t$'s, the random variable
\[\sum_{i=1}^m\left\{-\ka\int_0^t\xi_\eps\big({_i}B^{x_i,x_i}_t(r)\big)\d r-t\msf c_{\ka,\eps}\right\}\]
is Gaussian with mean $-mt\msf c_{\ka,\eps}$ and variance
\begin{align}
\label{Equation: UnAveraged F-K Bridge 5}
\ka^2\sum_{i,j=1}^m\int_{[0,t]^2}\mbf E\Big[\xi_\eps\big({_i}B^{x_i,x_i}_t(r_1)\big)\,\xi_\eps\big({_j}B^{x_j,x_j}_t(r_2)\big)\Big]\d r.
\end{align}
At this point, if we recall that $\xi_\eps$'s covariance is given by the
heat semigroup $p_\eps$, as well as the definitions of approximate SILTs and MILTs
in Definition \ref{Definition: Approximate SILT and MILT} (and the shorthands in
Notations \ref{Notation: SILT Shorthand} and
\ref{Notation: MILT Shorthand}), then we obtain that
\[\eqref{Equation: UnAveraged F-K Bridge 5}=2\ka^2\sum_{i=1}^m\be^\eps_t\big({_i}B^{x_i,x_i}_t\big)
+2\ka^2\sum_{1\leq i<j\leq m}\al^\eps_t\big({_i}B^{x_i,x_i}_t,{_j}B^{x_j,x_j}_t\big),\]
where the factor of $2$ in front of SILTs comes form $\be^\eps_{[0,t]}(Z)=2\be^\eps_{[0,t]^2_\leq}(Z)=2\be^\eps_t(Z)$,
and the factor of $2$ in front of MILTs comes from the fact that $\al^\eps_t(Z_i,Z_j)=\al^\eps_t(Z_j,Z_i)$.
If we now use this information to perform a Gaussian moment generating function calculation in \eqref{Equation: UnAveraged F-K Bridge 4},
then we are finally led to our first moment formula:
\begin{multline}
\label{Equation: Averaged F-K Bridge}
\mbf E\big[\msf T_{\ka,\eps}(t)^m\mr e^{-mt\msf c_{\ka,\eps}}\big]
=\frac{1}{(2\pi t)^m}\int_{D^m}\mbf E\Bigg[\mbf 1_{\cap_{i\leq m}\{\tau_D({_i}B^{x_i,x_i}_t)>t\}}\\
\cdot\exp\left(\sum_{i=1}^m\Big(\ka^2\be^\eps_t\big({_i}B^{x_i,x_i}_t\big)-t\msf c_{\ka,\eps}\Big)
+\ka^2\sum_{1\leq i<j\leq m}\al^\eps_t\big({_i}B^{x_i,x_i}_t,{_j}B^{x_j,x_j}_t\big)\right)\Bigg]\d x.
\end{multline}
If we perform essentially the same argument that took us from \eqref{Equation: UnAveraged F-K Bridge 2}
to \eqref{Equation: Averaged F-K Bridge}, but replace the Brownian bridges by Brownian motions, then
we get from \eqref{Equation: UnAveraged F-K Motion} that
\begin{multline}
\label{Equation: Averaged F-K Motion}
\mbf E\big[\msf M_{\ka,\eps}(t)^m\mr e^{-mt\msf c_{\ka,\eps}}\big]
=\int_{D^m}\mbf E\Bigg[\mbf 1_{\cap_{i\leq m}\{\tau_D({_i}B^{x_i})>t\}}\\
\cdot\exp\left(\sum_{i=1}^m\Big(\ka^2\be^\eps_t\big({_i}B^{x_i}\big)-t\msf c_{\ka,\eps}\Big)
+\ka^2\sum_{1\leq i<j\leq m}\al^\eps_t\big({_i}B^{x_i},{_j}B^{x_j}\big)\right)\Bigg]\d x.
\end{multline}

We now return to the task of proving \eqref{Equation: DVP Moments Prelimit UI}. For this purpose,
if we use the trivial bound $\mbf 1_{\{\cdot\}}\leq1$ and H\"older's inequality in
\eqref{Equation: Averaged F-K Bridge} and \eqref{Equation: Averaged F-K Motion},
together with the fact that $D$ is bounded,
then we get that \eqref{Equation: DVP Moments Prelimit UI} is a consequence of the following claim:
For any $\eta\geq1$, if $t>0$ is small enough, then
\begin{multline}
\label{Equation: UI after Holder}
\sup_{\eps>0,~x\in D^m}\prod_{i=1}^m\mbf E\left[\exp\Big(\eta c_m\big(\ka^2\be^\eps_t(Z_i)-t\msf c_{\ka,\eps}\big)\Big)\right]^{1/c_m}\\
\cdot\prod_{1\leq i<j\leq m}\mbf E\left[\exp\Big(\eta c_m\ka^2\al^\eps_t(Z_i,Z_j)\Big)\right]^{1/c_m}<\infty,
\end{multline}
where $c_m=\frac{m(m+1)}{2}$, and $Z_i$ is either ${_i}B^{x_i}$ or ${_i}B^{x_i,x_i}_t$ for $1\leq i\leq m$.
(For the purpose of this proof, we only need this result for $\eta=1$, but we will need
$\eta>1$ later on.)
The finiteness of the suprema involving $\al^\eps_t\big(Z_i,Z_j\big)$ for $i\neq j$
and small enough $t$
follows immediately from Lemma \ref{Lemma: MILT is UI}.
As for $\be^\eps_t(Z_i)-t\msf c_{\ka,\eps}$, if we write
\begin{align}
\label{Equation: Renormalization/Expected Value Rearrangement}
\ka^2\be^\eps_t(Z)-t\msf c_{\ka,\eps}=\ka^2\Big(\be^\eps_t(Z)-\mbf E\big[\be^\eps_t(Z)\big]\Big)+\ka^2\mbf E\big[\be^\eps_t(Z)\big]-t\msf c_{\ka,\eps},
\end{align}
then the finiteness of the suprema follows from Lemmas \ref{Lemma: Uniform Integrability of SILT}
and \ref{Lemma: Smoothed Expectation of SILT}-(1) (the latter of which
implies that $\ka^2\mbf E\big[\be^\eps_t(Z)\big]-t\msf c_{\ka,\eps}$ converges to
a constant as $\eps\to0$ thanks to \eqref{Equation: Renormalization}).
This concludes the proof of Lemma \ref{Lemma: Moments Prelimit UI}.

\subsection{Proof of Proposition \ref{Proposition: Feynman-Kac for T and M}}
\label{Section: Feynman-Kac for T and M}

We prove the following result, which obviously implies Proposition \ref{Proposition: Feynman-Kac for T and M}
by \eqref{Equation: Product Means Independent} and the fact that $\mbf{Var}[X]=\mbf E[X^2]-\mbf E[X]^2$: For any integer $m\geq1$,
there exists some $\th_{\ka,m}>0$ such that for every $t\in(0,\th_{\ka,m})$, one has
\begin{multline}
\label{Equation: Averaged F-K Bridge Limit}
\lim_{\eps\to0}\mbf E\big[\msf T_{\ka,\eps}(t)^m\mr e^{-mt\msf c_{\ka,\eps}}\big]
=\frac{\mr e^{m\ka^2t\log t/2\pi}}{(2\pi t)^m}\int_{D^m}\mbf E\Bigg[\mbf 1_{\cap_{i\leq m}\{\tau_D({_i}B^{x_i,x_i}_t)>t\}}\\
\cdot\exp\left(\ka^2\sum_{i=1}^m\ga_t\big({_i}B^{x_i,x_i}_t\big)
+\ka^2\sum_{1\leq i<j\leq m}\al_t\big({_i}B^{x_i,x_i}_t,{_j}B^{x_j,x_j}_t\big)\right)\Bigg]\d x,
\end{multline}
and similarly,
\begin{multline}
\label{Equation: Averaged F-K Motion Limit}
\lim_{\eps\to0}\mbf E\big[\msf M_{\ka,\eps}(t)^m\mr e^{-mt\msf c_{\ka,\eps}}\big]
=\mr e^{m\ka^2(t\log t-t)/2\pi}\int_{D^m}\mbf E\Bigg[\mbf 1_{\cap_{i\leq m}\{\tau_D({_i}B^{x_i})>t\}}\\
\cdot\exp\left(\ka^2\sum_{i=1}^m\ga_t\big({_i}B^{x_i}\big)
+\ka^2\sum_{1\leq i<j\leq m}\al_t\big({_i}B^{x_i},{_j}B^{x_j}\big)\right)\Bigg]\d x.
\end{multline}
These two limits follow by combining the moment formulas in 
\eqref{Equation: Averaged F-K Bridge} and \eqref{Equation: Averaged F-K Motion}
with the following observations:
\begin{enumerate}[(1)]
\item If we let $Z_i$ be either ${_i}B^{x_i}$ or ${_i}B^{x_i,x_i}_t$ for $1\leq i\leq m$,
then by Theorem \ref{Theorem: Existence of MILT},
\[\lim_{\eps\to0}\sum_{1\leq i<j\leq m}\al^\eps_t(Z_i,Z_j)
=\sum_{1\leq i<j\leq m}\al_t(Z_i,Z_j)
\quad\text{in probability.}\]
\item By Theorem \ref{Theorem: Existence of SILT}-(2),
the rearrangement \eqref{Equation: Renormalization/Expected Value Rearrangement},
and the definition of $\msf c_{\ka,\eps}$ in \eqref{Equation: Renormalization} and Lemma \ref{Lemma: Smoothed Expectation of SILT}-(1)
(noting that $\int_0^t\log r-\log(t-r)\d r=0$),
\begin{align*}
\lim_{\eps\to0}\sum_{i=1}^m\Big(\ka^2\be^\eps_t\big({_i}B^{x_i,x_i}_t\big)-t\msf c_{\ka,\eps}\Big)
=\ka^2\sum_{i=1}^m\ga_t\big({_i}B^{x_i,x_i}_t\big)+\frac{m\ka^2t\log t}{2\pi}\quad\text{in probability};\\
\lim_{\eps\to0}\sum_{i=1}^m\Big(\ka^2\be^\eps_t\big({_i}B^{x_i}\big)-t\msf c_{\ka,\eps}\Big)
=\ka^2\sum_{i=1}^m\ga_t\big({_i}B^{x_i}\big)+\frac{m\ka^2(t\log t-t)}{2\pi}\quad\text{in probability}.
\end{align*}
\item By \eqref{Equation: UI after Holder} with any $\eta>1$, we get that, for small enough $t$,
\begin{enumerate}[(3.1)]
\item the random variables inside the expectations in \eqref{Equation: Averaged F-K Bridge Limit}
and \eqref{Equation: Averaged F-K Motion Limit} are uniformly integrable over $\eps>0$ for every fixed $x_i$
by \eqref{Equation: Poussin}; and
\item the expected values inside the $\dd x$ integrals  in \eqref{Equation: Averaged F-K Bridge Limit}
and \eqref{Equation: Averaged F-K Motion Limit} (which we view as functions of the $x_i$'s)
are uniformly bounded over $\eps>0$.
\end{enumerate}
Thus, we can bring the $\eps\to0$ limits in \eqref{Equation: Averaged F-K Bridge Limit}
and \eqref{Equation: Averaged F-K Motion Limit} inside the $\dd x$ integrals and the expectations
by the dominated convergence theorem and the Vitali convergence theorem.
\end{enumerate}

\begin{remark}
\label{Remark: Renormalization 3}
Following-up on Remarks \ref{Remark: Renormalization 1} and \ref{Remark: Renormalization 2},
by Lemma \ref{Lemma: Smoothed Expectation of SILT}, we see from observation (2) above
that \eqref{Equation: Alternate Renormalization} is the only type of renormalization which will
yield a nontrivial limit for the moments of $\msf T_{\ka,\eps}(t)^m\mr e^{-mt\msf c_{\ka,\eps}}$
and $\msf M_{\ka,\eps}(t)^m\mr e^{-mt\msf c_{\ka,\eps}}$. Moreover,
the appearance of logarithmic terms in the moments of $\msf T_\ka(t)$ and $\msf M_\ka(t)$
comes from the expectations of approximate SILTs.
\end{remark}

\subsection{Proof of Lemma \ref{Lemma: Hilbert-Schmidt Cauchy Sequence}}
\label{Section: Hilbert-Schmidt Cauchy Sequence}

We first note that for any fixed $\eps,\tilde\eps>0$ we can expand
\begin{multline}
\label{Equation: HS Expansion}
\left\lVert
	\mc K_{\kappa, \eps}(t)\mr e^{-t\msf c_{\ka,\eps}}-\mc K_{\kappa, \tilde\eps}(t)\mr e^{-t\msf c_{\ka,\tilde\eps}}
	\right\rVert_{L^2(D^2)}^2\\
	=\int_{D^2}\left(
	\mc K_{\kappa, \eps}(t;x,y)\mr e^{-t\msf c_{\ka,\eps}}-\mc K_{\kappa, \tilde\eps}(t;x,y)\mr e^{-t\msf c_{\ka,\tilde\eps}}
	\right)^2\d x\dd y
\end{multline}
as the following sum:
\begin{multline}
\label{Equation: HS Expansion 2}
\int_{D^2}
	\mc K_{\kappa,\eps}(t;x,y)\mc K_{\kappa, \eps}(t;x,y)\mr e^{-2t\msf c_{\ka,\eps}}+\mc K_{\kappa,\tilde\eps}(t;x,y)\mc K_{\kappa, \tilde\eps}(t;x,y)\mr e^{-2t\msf c_{\ka,\tilde\eps}}\\
	-2\mc K_{\kappa, \eps}(t;x,y)\mr e^{-t\msf c_{\ka,\eps}}\cdot\mc K_{\kappa, \tilde\eps}(t;x,y)\mr e^{-t\msf c_{\ka,\tilde\eps}}
	\d x\dd y.
\end{multline}
Given that $\mc K_{\ka,\eps}(t)$ is a symmetric semigroup for all $\ka,\eps>0$,
\begin{align}
\label{Equation: Symmetry and Semigroup Integral Along the Middle}
\int_{D}\mc K_{\ka,\eps}(t;x,y)\mc K_{\ka,\eps}(t;x,y)\d y
=\int_{D}\mc K_{\ka,\eps}(t;x,y)\mc K_{\ka,\eps}(t;y,x)\d y
=\mc K_{\ka,\eps}(2t;x,x)
\end{align}
for all $x\in D$. If we apply this to the first
line in \eqref{Equation: HS Expansion 2}, then we can write \eqref{Equation: HS Expansion} as
\begin{multline}
\label{Equation: HS Expansion 3}
\int_D\mc K_{\kappa,\eps}(2t;x,x)\mr e^{-2t\msf c_{\ka,\eps}}+\mc K_{\kappa,\tilde\eps}(2t;x,x)\mr e^{-2t\msf c_{\ka,\tilde\eps}}\d x\\
	-2\int_{D^2}
\mc K_{\kappa, \eps}(t;x,y)\mr e^{-t\msf c_{\ka,\eps}}\cdot\mc K_{\kappa, \tilde\eps}(t;x,y)\mr e^{-t\msf c_{\ka,\tilde\eps}}
	\d x\dd y.
\end{multline}
Note that \eqref{Equation: H ka eps Spectral Expansion} and \eqref{Equation: Averaged F-K Bridge Limit} imply that
for small enough $t>0$,
\begin{multline*}
\lim_{\eps\to0}\mbf E\left[\int_D\mc K_{\kappa,\eps}(2t;x,x)\d x\,\,\mr e^{-2t\msf c_{\ka,\eps}}\right]
=\lim_{\eps\to0}\mbf E\big[\msf T_{\ka,\eps}(2t)\mr e^{-2t\msf c_{\ka,\eps}}\big]\\
=\frac{\mr e^{\ka^2t\log(2t)/\pi}}{4\pi t}
\int_{D}\mbf E\Bigg[\mbf 1_{\{\tau_D(B^{x,x}_{2t})>2t\}}
\mr e^{\ka^2\ga_{2t}(B^{x,x}_{2t})}\Bigg]\d x.
\end{multline*}
If we plug this into the first line of \eqref{Equation: HS Expansion 3},
then we conclude from the second line of \eqref{Equation: HS Expansion 3}
that Lemma \ref{Lemma: Hilbert-Schmidt Cauchy Sequence} follows if we show that for small enough $t>0$, one has
\begin{multline}
\label{Equation: HS Expansion GOAL}
\lim_{\eps,\tilde\eps\to0}
\mbf E\left[\int_{D^2}
\mc K_{\kappa, \eps}(t;x,y)\mr e^{-t\msf c_{\ka,\eps}}\cdot\mc K_{\kappa, \tilde\eps}(t;x,y)\mr e^{-t\msf c_{\ka,\tilde\eps}}
	\d x\dd y\right]\\
	=\frac{\mr e^{\ka^2t\log(2t)/\pi}}{4\pi t}
\int_{D}\mbf E\Bigg[\mbf 1_{\{\tau_D(B^{x,x}_{2t})>2t\}}
\mr e^{\ka^2\ga_{2t}(B^{x,x}_{2t})}\Bigg]\d x.
\end{multline}

Toward this end, we begin by deriving the analogue of \eqref{Equation: Symmetry and Semigroup Integral Along the Middle}
in the case where $\eps\neq\tilde\eps$, which will be instrumental in getting a formula for the right-hand side of
\eqref{Equation: HS Expansion GOAL}. For this purpose, we use the Feynman-Kac formula
\eqref{Equation: F-K} to write the integral
\begin{align}
\label{Equation: HS Expansion GOAL 1}
\int_{D}
\mc K_{\kappa, \eps}(t;x,y)\mc K_{\kappa, \tilde\eps}(t;x,y)\d y
\end{align}
as the probabilistic expression
\begin{multline}
\label{Equation: HS Expansion GOAL 2}
\int_Dp_t(x-y)
\mbf E_B\left[\mbf 1_{\{\tau_D(B^{x,y}_t)>t\}}\exp\left(-\ka\int_0^t\xi_\eps\big(B^{x,y}_t(r)\big)\d r\right)\right]\\
\cdot p_t(x-y)\mbf E_B\left[\mbf 1_{\{\tau_D(B^{x,y}_t)>t\}}\exp\left(-\ka\int_0^t\xi_{\tilde\eps}\big(B^{x,y}_t(r)\big)\d r\right)\right]\d y.
\end{multline}
If we use the fact that $p_t(\cdot)$ is an even function together with a time reversal on the Brownian bridge $B^{x,y}_t$ in
the expectation on the second line of \eqref{Equation: HS Expansion GOAL 2},
and then combine the product of the two expectations in \eqref{Equation: HS Expansion GOAL 2}
into a single expectation with two independent Brownian bridges, then we get that \eqref{Equation: HS Expansion GOAL 1} is
equal to
\begin{multline}
\label{Equation: HS Expansion GOAL 3}
\int_Dp_t(x-y)p_t(y-x)
\mbf E_B\Bigg[\mbf 1_{\{\tau_D({_1}B^{x,y}_t)>t\}\cap\{\tau_D({_2}B^{y,x}_t)>t\}}\\
\cdot\exp\left(-\ka\int_0^t\xi_{\eps}\big({_1}B^{x,y}_t(r)\big)\d r-\ka\int_0^t\xi_{\tilde\eps}\big({_2}B^{y,x}_t(r)\big)\d r\right)\Bigg]\d y.
\end{multline}
Consider the Brownian bridge $B^{x,x}_{2t}$. If we condition on the midpoint $B^{x,x}_{2t}(t)$
being equal to $y$, then the path segments
\[\big(B^{x,x}_{2t}(r):r\in[0,t]\big)
\qquad\text{and}\qquad
\big(B^{x,x}_{2t}(t+r):r\in[0,t]\big)\]
before and after the midpoint are equal in joint distribution to
\[\big({_1}B^{x,y}_t(r):r\in[0,t]\big)
\qquad\text{and}\qquad
\big({_2}B^{y,x}_t(r):r\in[0,t]\big).\]
Given that $B^{x,x}_{2t}(t)$'s density function is given by
\[y\mapsto\frac{p_t(x-y)p_t(y-x)}{p_{2t}(x,x)},\qquad y\in\mbb R^2,\]
if we multiply and divide \eqref{Equation: HS Expansion GOAL 3} by $p_{2t}(x,x)$
and then carry out the integration with respect to $\dd y$, then we get from the law of total expectation
that \eqref{Equation: HS Expansion GOAL 1} is equal to
\begin{multline}
\label{Equation: HS Expansion GOAL 4}
p_{2t}(x,x)
\mbf E_B\Bigg[\mbf 1_{\{\tau_D(B^{x,x}_{2t})>2t\}}\exp\left(-\ka\int_0^t\xi_{\eps}\big(B^{x,x}_{2t}(r)\big)\d r-\ka\int_t^{2t}\xi_{\tilde\eps}\big(B^{x,x}_{2t}(r)\big)\d r\right)\Bigg]\d y.
\end{multline}

With this in hand, we are now in a position to calculate the expectation on the right-hand side of \eqref{Equation: HS Expansion GOAL}:
Conditional on $B^{x,x}_{2t}$'s path, the random variable inside the expectation and the exponential in \eqref{Equation: HS Expansion GOAL 4}
is Gaussian with mean zero and variance
\[\mbf E_\xi\left[\left(-\ka\int_0^t\xi_{\eps}\big(B^{x,x}_{2t}(r)\big)\d r-\ka\int_t^{2t}\xi_{\tilde\eps}\big(B^{x,x}_{2t}(r)\big)\d r\right)^2\right],\]
which we can expand into the sum of four terms:
\begin{align}
\label{Equation: HS Expansion GOAL COV 1}
\ka^2\int_{[0,t]^2}\mbf E_\xi\Big[\xi_{\eps}\big(B^{x,x}_{2t}(r_1)\big)\xi_{\eps}\big(B^{x,x}_{2t}(r_2)\big)\Big]\d r;
\end{align}
\begin{align}
\label{Equation: HS Expansion GOAL COV 2}
\ka^2\int_{[t,2t]^2}\mbf E_\xi\Big[\xi_{\tilde\eps}\big(B^{x,x}_{2t}(r_1)\big)\xi_{\tilde\eps}\big(B^{x,x}_{2t}(r_2)\big)\Big]\d r;
\end{align}
\begin{align}
\label{Equation: HS Expansion GOAL COV 3}
\ka^2\int_{[0,t]\times[t,2t]}\mbf E_\xi\Big[\xi_{\eps}\big(B^{x,x}_{2t}(r_1)\big)\xi_{\tilde\eps}\big(B^{x,x}_{2t}(r_2)\big)\Big]\d r;
\end{align}
\begin{align}
\label{Equation: HS Expansion GOAL COV 4}
\ka^2\int_{[t,2t]\times[0,t]}\mbf E_\xi\Big[\xi_{\tilde\eps}\big(B^{x,x}_{2t}(r_1)\big)\xi_{\eps}\big(B^{x,x}_{2t}(r_2)\big)\Big]\d r.
\end{align}
By replicating the calculation we performed earlier in \eqref{Equation: UnAveraged F-K Bridge 5},
we get that
\[\eqref{Equation: HS Expansion GOAL COV 1}+\eqref{Equation: HS Expansion GOAL COV 2}=2\ka^2\be^\eps_{[0,t]^2_{\leq}}(B^{x,x}_{2t})+2\ka^2\be^{\tilde\eps}_{[t,2t]^2_{\leq}}(B^{x,x}_{2t}).\]
If we perform a similar calculation, but this time use the covariance property
\eqref{Equation: General xi eps Covariance}, then we get that
\[\eqref{Equation: HS Expansion GOAL COV 3}+\eqref{Equation: HS Expansion GOAL COV 4}
=2\ka^2\be^{(\eps+\tilde\eps)/2}_{[0,t]\times[t,2t]}(B^{x,x}_{2t}).\]
In particular, a Gaussian moment generating function calculation in \eqref{Equation: HS Expansion GOAL 4} implies that \eqref{Equation: HS Expansion GOAL}
can be reformulated as the claim that
\begin{multline}
\label{Equation: HS Expansion GOAL Averaged}
\lim_{\eps,\tilde\eps\to0}
\frac{1}{4\pi t}
\int_{D}\mbf E\Bigg[\mbf 1_{\{\tau_D(B^{x,x}_{2t})>2t\}}
\mr e^{(\ka^2\be^\eps_{[0,t]^2_{\leq}}(B^{x,x}_{2t})-t\msf c_{\ka,\eps})+(\ka^2\be^{\tilde\eps}_{[t,2t]^2_{\leq}}(B^{x,x}_{2t})-t\msf c_{\ka,\tilde\eps})}\\
\mr e^{\ka^2\be^{(\eps+\tilde\eps)/2}_{[0,t]\times[t,2t]}(B^{x,x}_{2t})}\Bigg]\d x
	=\frac{\mr e^{\ka^2t\log(2t)/\pi}}{4\pi t}
\int_{D}\mbf E\Bigg[\mbf 1_{\{\tau_D(B^{x,x}_{2t})>2t\}}
\mr e^{\ka^2\ga_{2t}(B^{x,x}_{2t})}\Bigg]\d x.
\end{multline}

Toward this end, we first note that we can write
\[\big(\ka^2\be^\eps_{[0,t]^2_{\leq}}(B^{x,x}_{2t})-t\msf c_{\ka,\eps}\big)+\big(\ka^2\be^{\tilde\eps}_{[t,2t]^2_{\leq}}(B^{x,x}_{2t})-t\msf c_{\ka,\tilde\eps}\big)
+\ka^2\be^{(\eps+\tilde\eps)/2}_{[0,t]\times[t,2t]}(B^{x,x}_{2t})\]
(i.e., the random variable inside the exponential on the left-hand side of \eqref{Equation: HS Expansion GOAL Averaged})
as the sum of the following two terms:
\begin{multline}
\label{Equation: HS Expansion GOAL Averaged 1}
\Big(\ka^2\be^\eps_{[0,t]^2_{\leq}}(B^{x,x}_{2t})-\ka^2\mbf E\big[\be^\eps_{[0,t]^2_{\leq}}(B^{x,x}_{2t})\big]\Big)
+\Big(\ka^2\be^{\tilde\eps}_{[t,2t]^2_{\leq}}(B^{x,x}_{2t})-\ka^2\mbf E\big[\be^{\tilde\eps}_{[t,2t]^2_{\leq}}(B^{x,x}_{2t})\big]\Big)\\
+\Big(\ka^2\be^{(\eps+\tilde\eps)/2}_{[0,t]\times[t,2t]}(B^{x,x}_{2t})-\ka^2\mbf E\big[\be^{(\eps+\tilde\eps)/2}_{[0,t]\times[t,2t]}(B^{x,x}_{2t})\big]\Big),
\end{multline}
and
\begin{multline}
\label{Equation: HS Expansion GOAL Averaged 2}
\Big(\ka^2\mbf E\big[\be^\eps_{[0,t]^2_{\leq}}(B^{x,x}_{2t})\big]-t\msf c_{\ka,\eps}\Big)
+\Big(\ka^2\mbf E\big[\be^{\tilde\eps}_{[t,2t]^2_{\leq}}(B^{x,x}_{2t})\big]-t\msf c_{\ka,\tilde\eps}\Big)\\
+\ka^2\mbf E\big[\be^{(\eps+\tilde\eps)/2}_{[0,t]\times[t,2t]}(B^{x,x}_{2t})\big].
\end{multline}
The reason why we write the terms in this particular way is that it clearly emphasizes the following
two facts: On the one hand, by Theorem \ref{Theorem: Existence of SILT},
\begin{align}
\label{Equation: HS Expansion GOAL Averaged 1.1}
\lim_{\eps,\tilde\eps\to0}\eqref{Equation: HS Expansion GOAL Averaged 1}=\ga_{[0,t]^2_\leq}(B^{x,x}_{2t})+\ga_{[t,2t]^2_\leq}(B^{x,x}_{2t})
+\be_{[0,t]\times[t,2t]}(B^{x,x}_{2t})-\mbf E\big[\be_{[0,t]\times[t,2t]}(B^{x,x}_{2t})\big]
\end{align}
in probability.
Given that $A\mapsto\be^\eps_A(Z)$ is a measure on $[0,\infty)^2$
that is zero whenever $A$ has Lebesgue measure zero
(this is clear from Definition \ref{Definition: Approximate SILT and MILT}),
it follows that $\be^\eps_A(Z)+\be^\eps_{\tilde A}(Z)=\be^\eps_{A\cup\tilde A}(Z)$ whenever $A\cap\tilde A$ has measure zero. By Theorem \ref{Theorem: Existence of SILT}, this same property is preserved in the limits
$\ga_A$ and $\be_A$, with an almost sure equality.
In particular, since the sets in the union
\[[0,t]_\leq^2\cup[t,2t]_{\leq}\cup\big([0,t]\times[t,2t]\big)=[0,2t]^2_{\leq}\]
only intersect on a set of measure zero, it follows that
\begin{align}
\label{Equation: HS Expansion GOAL Averaged 1.2}
\text{right-hand side of }\eqref{Equation: HS Expansion GOAL Averaged 1.1}=\ga_{2t}(B^{x,x}_{2t})\qquad\text{almost surely.}
\end{align}
On the other hand, by \eqref{Equation: Renormalization} and Lemma \ref{Lemma: Smoothed Expectation of SILT},
\begin{multline}
\label{Equation: HS Expansion GOAL Averaged 2.1}
\lim_{\eps,\tilde\eps\to0}\eqref{Equation: HS Expansion GOAL Averaged 2}=2\frac{\ka^2}{2\pi}\left(t\log(2t)+\int_0^t\log r-\log(2t-r)\d r\right)\\
+\frac{\ka^2}{2\pi}\left(\int_{t}^{2t}\log r-\log(2t-r)\d r
-\int_{0}^{t}\log r-\log(2t-r)\d r\right)=\frac{\ka^2t\log(2t)}\pi,
\end{multline}
where the last equality follows from the fact that the three integrals above cancel, since the map $f(r)=\log r-\log(2t-r)$
satisfies $f(r)=-f(2t-r)$ for $r\in[0,t]$.

Now that we have proved \eqref{Equation: HS Expansion GOAL Averaged 1.1}, \eqref{Equation: HS Expansion GOAL Averaged 1.2},
and \eqref{Equation: HS Expansion GOAL Averaged 2.1}, we see that \eqref{Equation: HS Expansion GOAL Averaged}
follows if we show that we can take the limits $\eps,\tilde\eps\to0$ inside the $\dd x$ integral and expectation therein.
For this purpose, if we combine the trivial bound $\mbf 1_{\{\cdot\}}\leq1$ with H\"older's inequality,
it suffices to show that for any $\eta\geq1$,
for small enough $t$ it holds that
\begin{multline}
\label{Equation: HS Expansion GOAL UI}
\sup_{x\in D,~\eps,\tilde\eps>0}
\mbf E\left[\mr e^{3\eta(\ka^2\be^\eps_{[0,t]^2_{\leq}}(B^{x,x}_{2t})-t\msf c_{\ka,\eps})}\right]^{1/3}
\mbf E\left[\mr e^{3\eta(\ka^2\be^{\tilde\eps}_{[t,2t]^2_{\leq}}(B^{x,x}_{2t})-t\msf c_{\ka,\tilde\eps})}\right]^{1/3}\\
\mbf E\left[\mr e^{3\eta\ka^2\be^{(\eps+\tilde\eps)/2}_{[0,t]\times[t,2t]}(B^{x,x}_{2t})}\right]^{1/3}<\infty.
\end{multline}
Indeed, this simultaneously shows that we can apply the dominated convergence theorem in the $\dd x$
integral (since $D$ is bounded) and the Vitali convergence theorem in the expectation
(since \eqref{Equation: HS Expansion GOAL UI} for $\eta>1$ shows that the random variables
in question are uniformly integrable by \eqref{Equation: Poussin}).
The finiteness of the suprema on the first line of
\eqref{Equation: HS Expansion GOAL UI} can be proved in the
same way as the finiteness of the suprema on the first line of \eqref{Equation: UI after Holder};
the finiteness of the second line of \eqref{Equation: HS Expansion GOAL UI} follows from
\eqref{Equation: Uniform Integrability of SILT - Non-Renormalized}.

\section{Theorem \ref{Theorem: Main} Step 3: Intersection Local Times Asymptotics}
\label{Section: Intersection Local Times Asymptotics}

We now use the Feynman-Kac formulas in Proposition \ref{Proposition: Feynman-Kac for T and M}
to prove the asymptotics in Theorem \ref{Theorem: Main}. Given that their proofs
differ slightly, we deal with the expectation
and variance asymptotics separately.

\subsection{Expectation Asymptotics}

By the classical Feynman-Kac formula,
\[\msf T_0(t)=\frac{1}{2\pi t}\int_D\mbf E\Big[\mbf 1_{\{\tau_D(B^{x,x}_t)>t\}}\Big]\d x
\qquad\text{and}\qquad
\msf M_0(t)=\int_D\mbf E\Big[\mbf 1_{\{\tau_D(B^x)>t\}}\Big]\d x.\]
If we combine this with Propositions \ref{Proposition: Moments Prelimit} and \ref{Proposition: Feynman-Kac for T and M},
then the expectation asymptotics in \eqref{Equation: Main Asymptotic 1}
and \eqref{Equation: Main Asymptotic 2} can be respectively reduced to
\begin{multline}
\label{Equation: Expectation Asymptotics 1}
\frac{\mr e^{\ka^2t\log t/2\pi}}{2\pi t}\int_D\mbf E\Big[\mbf 1_{\{\tau_D(B^{x,x}_t)>t\}}\mr e^{\ka^2\ga_t(B^{x,x}_t)}\Big]\d x\\
=\frac{1}{2\pi t}\int_D\mbf E\Big[\mbf 1_{\{\tau_D(B^{x,x}_t)>t\}}\Big]\d x+\tfrac{\ka^2\msf{A}(D)}{4\pi^2}\log t+o(\log t)
\qquad\text{as }t\to0,
\end{multline}
and similarly
\begin{multline}
\label{Equation: Expectation Asymptotics 2}
\mr e^{\ka^2(t\log t-t)/2\pi}\int_D\mbf E\Big[\mbf 1_{\{\tau_D(B^x)>t\}}\mr e^{\ka^2\ga_t(B^x)}\Big]\d x\\
=\int_D\mbf E\Big[\mbf 1_{\{\tau_D(B^x)>t\}}\Big]\d x+\tfrac{\ka^2\msf{A}(D)}{2\pi}t\log t+o(t\log t)\qquad\text{as }t\to0.
\end{multline}

Let us begin with \eqref{Equation: Expectation Asymptotics 1}.
By a Taylor expansion,
\begin{align}
\label{Equation: Expectation Asymptotics Deterministic Taylor}
\mr e^{\ka^2t\log t/2\pi}=1+\frac{\ka^2}{2\pi}t\log t+O(t^2\log^2t)\qquad\text{as } t\to0.
\end{align}
As $t^2\log^2t=o(t\log t)$, the left-hand side of \eqref{Equation: Expectation Asymptotics 1} can be written as
\begin{multline}
\label{Equation: Expectations Asymptotics 1.1}
\frac{1}{2\pi t}\int_D\mbf E\Big[\mbf 1_{\{\tau_D(B^{x,x}_t)>t\}}\mr e^{\ka^2\ga_t(B^{x,x}_t)}\Big]\d x\\
+\frac{1}{2\pi t}\left(\frac{\ka^2}{2\pi}t\log t+o(t\log t)\right)\int_D\mbf E\Big[\mbf 1_{\{\tau_D(B^{x,x}_t)>t\}}\mr e^{\ka^2\ga_t(B^{x,x}_t)}\Big]\d x.
\end{multline}
With this in hand, \eqref{Equation: Expectation Asymptotics 1} is now a consequence of the claims that
\begin{align}
\label{Equation: Exp of SILT is Negligible}
\int_D\mbf E\Big[\mbf 1_{\{\tau_D(B^{x,x}_t)>t\}}\mr e^{\ka^2\ga_t(B^{x,x}_t)}\Big]\d x
=\int_D\mbf E\Big[\mbf 1_{\{\tau_D(B^{x,x}_t)>t\}}\Big]\d x+o(t)\qquad\text{as }t\to0,
\end{align}
and
\begin{align}
\label{Equation: First Order of T0}
\int_D\mbf E\Big[\mbf 1_{\{\tau_D(B^{x,x}_t)>t\}}\Big]\d x=\msf A(D)+o(1)\qquad\text{as }t\to0.
\end{align}
(More specifically, applying \eqref{Equation: Exp of SILT is Negligible}
in the first line of \eqref{Equation: Expectations Asymptotics 1.1},
and then successively applying \eqref{Equation: Exp of SILT is Negligible}
and \eqref{Equation: First Order of T0}
in the second line of \eqref{Equation: Expectations Asymptotics 1.1}.)

We begin with \eqref{Equation: First Order of T0}, since one element of its
proof is also used in \eqref{Equation: Exp of SILT is Negligible}.
Given that
\begin{align}
\label{Equation: 1A=1-1Ac trick}
\mbf 1_{\{\tau_D(B^{x,x}_t)>t\}}=1-\mbf 1_{\{\tau_D(B^{x,x}_t)\leq t\}},
\end{align}
it suffices to show that for every $0<\theta\leq1$,
\begin{align}
\label{Equation: First Order of T0 2}
\int_D\mbf E\Big[\mbf 1_{\{\tau_D(B^{x,x}_t)\leq t\}}\Big]^{\theta}\d x=o(1)\qquad\text{as }t\to0
\end{align}
(we only need the result with $\theta=1$ for \eqref{Equation: First Order of T0},
but we will need the case of $\theta<1$ in the proof of \eqref{Equation: Exp of SILT is Negligible}).
By Brownian scaling, we can couple the Brownian bridges appearing in \eqref{Equation: First Order of T0 2}
for different $x$'s and $t$'s in such a way that
\[B^{x,x}_t=x+\sqrt{t}\,B^{0,0}_1,\qquad t>0,~x\in D.\]
Under this coupling, we observe that for every $x\in D$,
\[\lim_{t\to0}\mbf 1_{\{\tau_D(B^{x,x}_t)\leq t\}}=\lim_{t\to0}\mbf 1_{\{\tau_D(x+\sqrt{t}B^{0,0}_1)\leq 1\}}=0\qquad\text{almost surely}.\]
Indeed, since $D$ is open, for any point $x\in D$, there exists some $\eps_x>0$ small enough so that the ball
$\mc B(x,\eps_x)=\{y\in\mbb R^2:|x-y|<\eps\}$ is contained in $D$. Since
\[\mc S=\sup_{s\in[0,1]}|B^{0,0}_1(s)|<\infty\qquad\text{almost surely},\]
we have that $x+\sqrt{t}B^{0,0}_1(s)\in \mc B(x,\eps_x)$ for all $s\in[0,1]$ (hence $\mbf 1_{\{\tau_D(x+\sqrt{t}B^{0,0}_1)\leq 1\}}=0$)
whenever $\sqrt t\mc S<\eps_x$, which is equivalent to $t<(\eps_x/\mc S)^2$.
We then get \eqref{Equation: First Order of T0 2} by dominated convergence
thanks to the trivial bound $\mbf 1_{\{\cdot\}}\leq1$.

We now move on to \eqref{Equation: Exp of SILT is Negligible}.
By a Taylor expansion, we can write
\[\mr e^z=1+z+\msf R(z),\qquad\msf |R(z)|
\leq z^2(1+\mr e^z),\qquad z\in\mbb R.\]
If we apply this to $z=\ka^2\ga_t(B^{x,x}_t)$, then
\eqref{Equation: Exp of SILT is Negligible} can be reduced to
\begin{align}
\label{Equation: Bridge Expectation Asymptotics 1}
&\mbf E\left[\int_D\mbf 1_{\{\tau_D(B^{x,x}_t)>t\}}\ga_t(B^{x,x}_t)\d x\right]=o(t)\qquad\text{as }t\to0,\\
\label{Equation: Bridge Expectation Asymptotics 2}
&\mbf E\left[\int_D\mbf 1_{\{\tau_D(B^{x,x}_t)>t\}}\ga_t(B^{x,x}_t)^2(1+\mr e^{\ka^2\ga_t(B^{x,x}_t)})\d x\right]=o(t)\qquad\text{as }t\to0.
\end{align}
Using \eqref{Equation: 1A=1-1Ac trick} and Tonelli's theorem, we get that
\[\text{left-hand side of \eqref{Equation: Bridge Expectation Asymptotics 1}}=\int_D\mbf E\left[\ga_t(B^{x,x}_t)\right]\d x
-\int_D\mbf E\left[\mbf 1_{\{\tau_D(B^{x,x}_t)\leq t\}}\ga_t(B^{x,x}_t)\right]\d x.\]
On the one hand, given that $\ga_t(B^{x,x}_t)$ is constructed as the limit of random variables with expectation zero
(Theorem \ref{Theorem: Existence of SILT}-(2)) that are uniformly integrable (i.e., \eqref{Equation: Uniform Integrability of SILT - Renormalized}), we get that $\int_D\mbf E\left[\ga_t(B^{x,x}_t)\right]\dd x=0.$
On the other hand, H\"older's inequality
and the scaling property in Lemma \ref{Lemma: Scaling of Approximate SILT} implies that
\[\int_D\mbf E\left[\mbf 1_{\{\tau_D(B^{x,x}_t)\leq t\}}\ga_t(B^{x,x}_t)\right]\d x
\leq t\cdot\mbf E\left[\ga_1(B^{0,0}_1)^2\right]^{1/2}\int_D\mbf E\left[\mbf 1_{\{\tau_D(B^{x,x}_t)\leq t\}}\right]^{1/2}\d x.\]
Since the moments of $\ga_1(B^{0,0}_1)$ are finite (i.e., \eqref{Equation: Exponential Moment SILT}),
this term is on the order of $o(t)$ by \eqref{Equation: First Order of T0 2} with $\theta=1/2$;
hence \eqref{Equation: Bridge Expectation Asymptotics 1} holds.

We now move on to \eqref{Equation: Bridge Expectation Asymptotics 2}.
If we combine the trivial bound $\mbf 1_{\{\cdot\}}\leq1$
with H\"older's inequality and the scaling property in Lemma \ref{Lemma: Scaling of Approximate SILT}, then we get
\[\text{left-hand side of }\eqref{Equation: Bridge Expectation Asymptotics 2}\leq
\msf A(D)\,t^2\,\mbf E\left[\ga_1(B^{0,0}_1)^4\right]^{1/2}\mbf E\left[\left(1+\mr e^{\ka^2t\ga_1(B^{0,0}_1)}\right)^2\right]^{1/2}.\]
This decays on the order of $O(t^2)=o(t)$ as $t\to0$ by \eqref{Equation: Uniform Integrability of SILT - Renormalized},
thus proving \eqref{Equation: Bridge Expectation Asymptotics 2}.

With this done, we have now concluded the proof of \eqref{Equation: Expectation Asymptotics 1},
and thus of the expectation asymptotic for $\mbf E\big[\msf T_\ka(t)\big]$ in Theorem \ref{Theorem: Main}.
In order to prove the corresponding asymptotic for $\mbf E\big[\msf M_\ka(t)\big]$, i.e.,
\eqref{Equation: Expectation Asymptotics 2}, we can go through exactly the same steps
with every instance of $B^{x,x}_t$ replaced by $B^x$,
but with the following minor differences:
\begin{enumerate}[(1)]
\item The Taylor expansion \eqref{Equation: Expectation Asymptotics Deterministic Taylor}
is replaced by
\[\mr e^{\ka^2(t\log t-t)/2\pi}=1+\frac{\ka^2}{2\pi}t\log t+O(t)\]
due to the presence of $-\ka^2t/2\pi$.
\item The analogue of \eqref{Equation: Expectations Asymptotics 1.1} for $\msf M_\ka(t)$
has no factor of the form $\frac1{2\pi t}$.
\end{enumerate}
This concludes the proof of the expectation asymptotics in Theorem \ref{Theorem: Main}.

\subsection{Variance Asymptotics}

We begin with the variance bound for $\msf T_\ka(t)$.
Since $\mbf 1_{\{\cdot\}}\leq 1$ and $\frac{\mr e^{\ka^2t\log t/\pi}}{2\pi}=O(1)$ as $t\to0$,
an application of H\"older's inequality in \eqref{Equation: Feynman-Kac for T and M 3} (together with Proposition \ref{Proposition: Moments Prelimit}) yields the $t\to0$ asymptotic
\[\mbf{Var}\big[\msf T_\ka(t)\big]=O\left(t^{-2}\int_{D^2}\mbf E\left[\mr e^{2\ka^2\sum_{i=1}^2\ga_t({_i}B^{x_i,x_i}_t)}\right]^{1/2}
\mbf E\left[\left(\mr e^{\ka^2\al_t({_1}B^{x_1,x_1}_t,{_2}B^{x_2,x_2}_t)}-1\right)^2\right]^{1/2}\Bigg]\d x\right).\]
By \eqref{Equation: Exponential Moment SILT}, this yields
\[\mbf{Var}\big[\msf T_\ka(t)\big]=O\left(t^{-2}\int_{D^2}
\mbf E\left[\left(\mr e^{\ka^2\al_t({_1}B^{x_1,x_1}_t,{_2}B^{x_2,x_2}_t)}-1\right)^2\right]^{1/2}\Bigg]\d x\right).\]
Given that $\mr e^z-1\leq|z|\mr e^{|z|}$, another application of H\"older's inequality yields
\[\mbf{Var}\big[\msf T_\ka(t)\big]=O\left(t^{-2}\int_{D^2}
\mbf E\left[\al_t({_1}B^{x_1,x_1}_t,{_2}B^{x_2,x_2}_t)^4\right]^{1/4}
\mbf E\left[\mr e^{4\ka^2\al_t({_1}B^{x_1,x_1}_t,{_2}B^{x_2,x_2}_t)}\right]^{1/4}\d x\right).\]
By Lemma \ref{Lemma: MILT is UI}, the term involving an exponential above is uniformly bounded over
$x$ for small $t$, hence
\[\mbf{Var}\big[\msf T_\ka(t)\big]=O\left(t^{-2}\int_{D^2}
\mbf E\left[\al_t({_1}B^{x_1,x_1}_t,{_2}B^{x_2,x_2}_t)^4\right]^{1/4}\d x\right).\]
Therefore,
we obtain the variance bound in \eqref{Equation: Main Asymptotic 1}
if we show that, as $t\to0$,
\begin{align}
\label{Equation: Variance to MILT}
\int_{D^2}
\mbf E\left[\al_t({_1}B^{x_1,x_1}_t,{_2}B^{x_2,x_2}_t)^4\right]^{1/4}\d x=O(t^2).
\end{align}

By Lemma \ref{Lemma: No Intersection Means no MILT},
for any $x=(x_1,x_2)\in D^2$,
we can write
\[\mbf E\left[\al_t({_1}B^{x_1,x_1}_t,{_2}B^{x_2,x_2}_t)^4\right]^{1/4}
=\mbf E\left[\al_t({_1}B^{x_1,x_1}_t,{_2}B^{x_2,x_2}_t)^4\mbf 1_{\mc A_1(t,x)\cup\mc A_2(t,x)}\right]^{1/4},\]
where we define the events
\[\mc A_i(t,x)=\left\{\sup_{r\in[0,t]}\left|{_i}B^{x_i,x_i}_t(r)-x_i\right|\geq\frac{|x_1-x_2|}{3}\right\},\qquad i=1,2.\]
Indeed, if $\mc A_i(t,x)^c\cap\mc A_i(t,x)^c$ holds, then the paths of ${_i}B^{x_i,x_i}_t$ are separated
by at least $\theta=|x_1-x_2|/3$, hence the event $\mf O_{t,\theta}({_1}B^{x_1,x_1}_t,{_2}B^{x_2,x_2}_t)$ holds.
Thus, by H\"older's inequality, we have the estimate
\begin{multline*}
\int_{D^2}
\mbf E\left[\al_t({_1}B^{x_1,x_1}_t,{_2}B^{x_2,x_2}_t)^4\right]^{1/4}\d x\\
\leq
\int_{D^2}
\mbf E\left[\al_t({_1}B^{x_1,x_1}_t,{_2}B^{x_2,x_2}_t)^8\right]^{1/8}\mbf P\big[\mc A_1(t,x)\cup\mc A_2(t,x)\big]^{1/8}\d x.
\end{multline*}
By Lemmas \ref{Lemma: MILT Scaling Property} and \ref{Lemma: MILT is UI},
\[\sup_{x\in D^2}\mbf E\left[\al_t({_1}B^{x_1,x_1}_t,{_2}B^{x_2,x_2}_t)^8\right]^{1/8}=O(t).\]
Therefore, we get that
\[\int_{D^2}
\mbf E\left[\al_t({_1}B^{x_1,x_1}_t,{_2}B^{x_2,x_2}_t)^4\right]^{1/4}\d x=
O\left(t\int_{D^2}\mbf P\big[\mc A_1(t,x)\cup\mc A_2(t,x)\big]^{1/8}\d x\right).\]
If we combine a Brownian scaling with the fact that
Brownian motion and bridge suprema have Gaussian tails, then we conclude that
there exist some
constants $C,c>0$ such that for $i=1,2$, one has
\[\mbf P\big[\mc A_i(t,x)\big]\leq\mbf P\left[\sup_{r\in[0,1]}\left|{_i}B^{0,0}_1(r)\right|\geq\frac{|x_1-x_2|}{3t^{1/2}}\right]
\leq C\mr e^{-c|x_1-x_2|^2/t}.\]
Thus, by a union bound,
\[\int_{D^2}
\mbf E\left[\al_t({_1}B^{x_1,x_1}_t,{_2}B^{x_2,x_2}_t)^4\right]^{1/4}\d x=
O\left(t\int_{D^2}\mr e^{-c|x_1-x_2|^2/8t}\d x\right),\]
which yields \eqref{Equation: Variance to MILT} since $\int_{D^2}\mr e^{-c|x_1-x_2|^2/8t}\dd x=O(t)$.

We have now proved the variance bound for $\msf T_\ka(t)$ in Theorem \ref{Theorem: Main}.
In order to get the corresponding asymptotic for $\msf M_\ka(t)$, we apply exactly the
same argument with Brownian bridges replaced by Brownian motions, the only
meaningful difference being that the absence of the factor $\frac{1}{4\pi t^2}$ in \eqref{Equation: Feynman-Kac for T and M 4}
yields a variance bound on the order of $O(t^2)$ instead.
The proof of Theorem \ref{Theorem: Main} is thus complete.

\section{Proof of Applications}
\label{Section: Applications}

\subsection{Proof of Corollaries \ref{Corollary: Spectral Geometry} and \ref{Corollary: Variance}}
\label{Section: Proof of Spectral Geometry}

We first consider Corollary \ref{Corollary: Spectral Geometry}-(1).
By \eqref{Equation: Main Asymptotic 1} and \eqref{Equation: Smooth Heat Trace},
\[\mbf E\big[2\pi t_n\msf T_\ka(t_n)\big]=\msf{A}(D)+o(1)
\qquad\text{and}\qquad
\mbf{Var}\big[2\pi t_n\msf T_\ka(t_n)\big]=O(t_n^2)\]
as $n\to\infty$. Thus, by a combination of the Borel-Cantelli lemma
and Chebyshev's inequality, it suffices to show that there exists some $\eta\in(0,2)$ such that
\[\mbf P\left[\Big|2\pi t_n\msf T_\ka(t_n)-\mbf E\big[2\pi t_n\msf T_\ka(t_n)\big]\Big|>t_n^{\eta/2}\right]=O(t_n^{2-\eta})\]
is summable in $n$. If $t_n\leq cn^{-1/2-\eps}$ for some $c,\eps>0$, then we get that
\[\sum_{n=1}^\infty t_n^{2-\eta}\leq c\sum_{n=1}^\infty n^{(-1/2-\eps)(2-\eta)}
=c\sum_{n=1}^\infty n^{-1-2\eps+\eta(1/2+\eps)}.\]
For every $\eps>0$, we can always find $\eta$ small enough so that
$-2\eps+\eta(1/2+\eps)<0$, thus making the above summable.

Corollaries \ref{Corollary: Spectral Geometry}-(2) and \ref{Corollary: Variance}
are proved similarly:
By \eqref{Equation: Main Asymptotic 1} and \eqref{Equation: Smooth Heat Trace},
\[\mbf E\left[4\sqrt{2\pi t_n}\left(\tfrac{\msf{A}(D)}{2\pi}t_n^{-1}-\msf T_\ka(t_n)\right)\right]=\msf{L}(\partial D)+o(1),\]
\[\mbf{Var}\left[4\sqrt{2\pi t_n}\left(\tfrac{\msf{A}(D)}{2\pi}t_n^{-1}-\msf T_\ka(t_n)\right)\right]=O(t_n),\]
as $n\to\infty$, and similarly by \eqref{Equation: Main Asymptotic 1},
\[\mbf E\left[\tfrac{4\pi^2}{\msf{A}(D)\log t_n}\big(\mathsf T_\ka(t_n)-\msf T_0(t_n)\big)\right]=\ka^2+o(1),\]
\[\mbf{Var}\left[\tfrac{4\pi^2}{\msf{A}(D)\log t_n}\big(\mathsf T_\ka(t_n)-\msf T_0(t_n)\big)\right]=O\big((\log t_n)^{-2}\big).\]
Thus, by Borel-Cantelli and Chebyshev's inequality, it suffices to show that there exists $\eta>0$
such that $\sum_{n=1}^\infty t_n^{1-\eta}<\infty$ in the case of Corollary \ref{Corollary: Spectral Geometry}-(2)
(this works if $t_n\leq cn^{-1-\eps}$),
and similarly $\sum_{n=1}^\infty |\log t_n|^{-2+\eta}<\infty$ in the case of Corollary \ref{Corollary: Variance}
(this works if $t_n\leq\tilde c\mr e^{-cn^{1/2+\eps}}$). This concludes the proofs of Corollaries \ref{Corollary: Spectral Geometry}
and \ref{Corollary: Variance}.

\subsection{Proof of Corollary \ref{Corollary: Fractal Dimension}}
\label{Section: Proof of Fractal Dimension}

It suffices to show that there exists random variables $\mf c,\mf N>1$ such that,
almost surely,
\begin{align}
\label{Equation: Fractal Dimension Sufficient Condition}
\mf c^{-1}\,t_n^{1-\msf d_M(\partial D)/2}\leq\msf{A}(D)-\msf M_\ka(t_n)\leq\mf c\,t_n^{1-\msf d_M(\partial D)/2}
\qquad\text{for every }n\geq\mf N.
\end{align}
For this purpose, let us write
\[\msf{A}(D)-\msf M_\ka(t_n)
=\big(\msf{A}(D)-\msf M_0(t_n)\big)
+\big(\msf M_0(t_n)-\mbf E\big[\msf M_\ka(t_n)\big]\big)
+\big(\mbf E\big[\msf M_\ka(t_n)\big]-\msf M_\ka(t_n)\big).\]
Since $t\log t=o(t^{1-\msf d_M(\partial D)/2})$,
thanks to the expectation asymptotic in \eqref{Equation: Main Asymptotic 2} and \eqref{Equation: Fractal Dimension Heat Content},
there exists nonrandom constants $\tilde c,\tilde N>1$ such that
\[\tilde c^{-1}\,t_n^{1-\msf d_M(\partial D)/2}\leq\big(\msf{A}(D)-\msf M_0(t_n)\big)+\big(\msf M_0(t_n)-\mbf E\big[\msf M_\ka(t_n)\big]\big)\leq\tilde c\,t_n^{1-\msf d_M(\partial D)/2}
\qquad\text{for }n\geq\tilde N.\]
Thus, it suffices to show that for every $\eta>0$ (in particular, $\eta<\tilde c$),
\[\mbf P\left[\big|\msf M_\ka(t_n)-\mbf E\big[\msf M_\ka(t_n)\big]\big|>\eta t_n^{1-\msf d_M(\partial D)/2}\text{ infinitely often}\right]=0.\]
By the Borel-Cantelli lemma and Chebyshev's inequality (using the variance bound in \eqref{Equation: Main Asymptotic 2}), this follows from
\[\sum_{n=1}^\infty t_n^{2-2(1-\msf d_M(\partial D)/2)}=\sum_{n=1}^\infty t_n^{\msf d_M(\partial D)}<\infty.\]
If we choose $t_n\leq cn^{-1/\msf d_M(\partial D)-\eps}$ for any fixed $c,\eps>0$,
then we get
\[\sum_{n=1}^\infty t_n^{\msf d_M(\partial D)}\leq c\sum_{n=1}^\infty n^{-1-\eps\msf d_M(\partial D)}<\infty,\]
as desired.

\bibliographystyle{plain}
\bibliography{Bibliography}

\end{document}